\documentclass[11pt]{amsart}              \parindent0pt
\parskip5pt             

\usepackage{graphicx, amssymb,amsmath}
\usepackage[T1]{fontenc}
\usepackage[utf8]{inputenc}
\usepackage{enumitem}
\usepackage{hyperref}
\usepackage{xurl}
\usepackage[labelfont=rm]{caption}
\usepackage[labelformat=simple]{subcaption}
\usepackage[defaultlines=3,all]{nowidow}
\usepackage{tikz}

\usepackage{pgf}
\usepackage{xcolor}

\usetikzlibrary{calc, decorations.markings, intersections, cd, external}

\addtolength{\hoffset}{-1cm}
\addtolength{\textwidth}{2cm}
\linespread{1.2}

\DeclareMathOperator{\vol}{Vol}
\DeclareMathOperator{\aut}{Aut}
\newcommand{\multiconj}{\mathcal{A}}
\newcommand{\novacio}{\mathcal{F}}
\newcommand{\R}{\mathbb{R}}
\newcommand{\abs}[1]{\left\lvert #1\right\rvert}
\setcounter{MaxMatrixCols}{12}
\newtheorem{thm}{Theorem}[section]
\newtheorem{proposition}[thm]{Proposition}
\newtheorem{coro}[thm]{Corollary}
\newtheorem{lema}[thm]{Lemma}
\theoremstyle{definition}
\newtheorem{dfn}[thm]{Definition}
\newtheorem{ntc}[thm]{Notation}
\newtheorem{ejm}[thm]{Example}
\theoremstyle{remark}
\newtheorem{remark}[thm]{Remark}

\numberwithin{equation}{section}

\tikzset{->-/.style={decoration={markings, mark=at position #1 with {\arrow{>}}},postaction={decorate}}}

\newcommand\cruz[2]{
\begin{scope}[shift={(#1)}]
\draw (-1,-1) rectangle (1,1);
\draw (-1,-1) --(1,1);
\draw (-1,1) --(1,-1);
#2
\end{scope}
}

\newcommand\cuadradob[3]{
\begin{scope}[shift={(#1,#2)},scale=.9]
\draw (-1,-1) rectangle (1,1);
\draw (-1,-1) node[left] {$A_4$} -- (1,1) node[right] {$A_2$} (1, -1)node[right] {$A_1$} --(-1,1) node[left] {$A_3$};
\fill[red] (#3) circle [radius=.1];
\end{scope}
}

\newcommand\hexagono[1]{
\begin{scope}[shift={(0,0)}]
\foreach \x in {1,...,6}
{
\coordinate (A\x) at ({60*(\x-3)}:1.5);
\fill (A\x) circle [radius=.1];
\node at ($1.26*(A\x)$) {$A_\x$};
}
\foreach \x [evaluate=\x as \y using \x+1] in {1,...,5}
\foreach \z in {\y,...,6}
{
\draw[name path=(L\x-\z)] (A\x) -- (A\z);
}
\fill[red] (#1) circle [radius=.1];
\end{scope}
}

\newcommand\esfera{
\filldraw[fill=cyan!50!white] circle [radius=1cm];
\draw (-1,0) arc [start angle=180,end angle=360,x radius=1cm,y radius=.25cm];
\draw[dashed] (-1,0) arc [start angle=180,end angle=0,x radius=1cm,y radius=.25cm];
\fill[red] (-110:1) circle [radius=.1];
\fill[red] (-70:1) circle [radius=.1];
\fill[red] (-20:1) circle [radius=.1];
\fill[red] (20:1) circle [radius=.1];
}

\newcommand\toro{
\filldraw[fill=cyan!50!white] (0,-.75) to[out=90,in=-90] (-.25,-.5) to[out=90,in=-90] (0,-.25)to[out=90,in=-90] (-.25,0) to[out=90,in=-90] (0,.25)to[out=90,in=-90] (-.25,.5) to[out=90,in=-90] (0,.75) to[out=90,in=90] (-1,0) to[out=-90,in=-90] (0,-.75);
\filldraw[fill=white] (-.625,0) circle [radius=.125cm];
\draw (-1,0) arc [start angle=180,end angle=360,x radius=.125cm,y radius=.05cm];
\draw[dotted] (-1,0) arc [start angle=180,end angle=0,x radius=.125cm,y radius=.05cm];
\draw (-.5,0) arc [start angle=180,end angle=360,x radius=.125cm,y radius=.05cm];
\draw[dotted] (-.5,0) arc [start angle=180,end angle=0,x radius=.125cm,y radius=.05cm];
\fill[red] (0,-.75) circle [radius=.05];
\fill[red] (0,-.25) circle [radius=.05];
\fill[red] (0,.25) circle [radius=.05];
\fill[red] (0,.75) circle [radius=.05];
}

\newcommand\esfd{
\filldraw[fill=cyan!50!white] (.25,0) to[out=180,in=0] (0,.25) to[out=180,in=0] (-.25,0)to[out=180,in=-90] (-.5,.5) to[out=90,in=180] (0,1)to[out=0,in=90] (.5,.5) to[out=-90,in=0] (.25,0);
\draw (-.5,.5) arc [start angle=180,end angle=360,x radius=.5cm,y radius=.1cm];
\draw[dotted] (-.5,.5) arc [start angle=180,end angle=0,x radius=.5cm,y radius=.1cm];
\fill[red] (.25,0) circle [radius=.05];
\fill[red] (-.25,0) circle [radius=.05];
\fill[red] (.475,.25) circle [radius=.05];
\fill[red] (.425,.75) circle [radius=.05];
}

\newcommand\esff{
\filldraw[fill=cyan!50!white] (.25,0) to[out=180,in=0] (0,.25) to[out=180,in=0] (-.25,0)to[out=180,in=-90] (-.5,.5) to[out=90,in=180] (0,1)to[out=0,in=90] (.5,.75) to[out=-90,in=90] (.25,.5)to[out=-90,in=90] (.5,.25) to[out=-90,in=0] (.25,0);
\draw (-.5,.5) arc [start angle=180,end angle=360,x radius=.375cm,y radius=.1cm];
\draw[dotted] (-.5,.5) arc [start angle=180,end angle=0,x radius=.375cm,y radius=.1cm];
\fill[red] (.25,0) circle [radius=.05];
\fill[red] (-.25,0) circle [radius=.05];
\fill[red] (.5,.25) circle [radius=.05];
\fill[red] (.5,.75) circle [radius=.05];
}

\newcommand\bloque[4]{\filldraw[fill=green!80!white, draw=black,xshift=#1 cm,yshift=#2 cm,xscale=#3,yscale=#4]
(-1,0) --(-1/2,0) -- (-1/2,1/4) -- (-3/4,1/2) -- (-1,1/2) --cycle;}

\newcommand\bloqueiv[1]{
\begin{scope}[shift={#1}]
\bloque{0}{0}{1}{1}
\bloque{0}{1}{1}{-1}
\bloque{-1}{0}{-1}{1}
\bloque{-1}{1}{-1}{-1}
\fill[fill=green!20!white]
(-1/2,1/4) -- (-1/4,1/2) -- (-1/2,3/4) -- (-3/4,1/2) --cycle;
\end{scope}}

\pgfmathsetmacro{\r}{2}
\pgfmathsetmacro{\s}{{2/3}}
\newcommand\borromeo{\coordinate (A1) at (-1,0);
\coordinate (A2) at (1,0);\
\coordinate (A3) at (0,{sqrt(3)});

\coordinate (P1) at (0,{sqrt(3)});
\coordinate (P2) at (-1,0);
\coordinate (P3) at (1,0);
\coordinate (P4) at (2,{sqrt(3)});
\coordinate (P5) at (-2,{sqrt(3)});
\coordinate (P6) at (0,{-sqrt(3)});

\foreach \x in {1,2,3}
{
\draw[name path global/.expanded=C\x] (A\x) circle [radius=\r cm];
}

\foreach \x in {1,...,6}
{
\draw[dashed] (P\x) circle [radius=\s cm];
\path[name path global/.expanded=D\x] (P\x) circle [radius=\s cm];
}
}
\newcommand{\borromeoa}{\coordinate (A1) at (-1,0);
\coordinate (A2) at (1,0);\
\coordinate (A3) at (0,{sqrt(3)});

\coordinate (P1) at (0,{sqrt(3)});
\coordinate (P2) at (-1,0);
\coordinate (P3) at (1,0);
\coordinate (P5) at (2,{sqrt(3)});
\coordinate (P4) at (-2,{sqrt(3)});
\coordinate (P6) at (0,{-sqrt(3)});

\foreach \x in {1,2,3}
{
\draw[name path global/.expanded=C\x] (A\x) circle [radius=\r cm];
}

\foreach \x in {1,...,6}
{
\fill[white] (P\x) circle [radius=\s cm];
\path[name path global/.expanded=D\x] (P\x) circle [radius=\s cm];
}

\path[name intersections={of=C1 and D1, name=Q1, total=\ti}];
\coordinate (R1-4) at (Q1-1);
\coordinate (R1-3) at (Q1-2);
\path[name intersections={of=C2 and D1, name=Q2, total=\ti}];
\coordinate (R1-5) at (Q2-1);
\coordinate (R1-2) at (Q2-2);

\path[name intersections={of=C2 and D2, name=Q3, total=\tii}];
\coordinate (R2-1) at (Q3-1);
\coordinate (R2-6) at (Q3-2);
\path[name intersections={of=C3 and D2, name=Q4, total=\tii}];
\coordinate (R2-4) at (Q4-1);
\coordinate (R2-3) at (Q4-2);

\path[name intersections={of=C1 and D3, name=Q5, total=\tii}];
\coordinate (R3-1) at (Q5-1);
\coordinate (R3-6) at (Q5-2);
\path[name intersections={of=C3 and D3, name=Q6, total=\tii}];
\coordinate (R3-2) at (Q6-2);
\coordinate (R3-5) at (Q6-1);

\path[name intersections={of=C1 and D4, name=Q7, total=\tii}];
\coordinate (R4-1) at (Q7-1);
\coordinate (R4-6) at (Q7-2);
\path[name intersections={of=C3 and D4, name=Q8, total=\tii}];
\coordinate (R4-2) at (Q8-2);
\coordinate (R4-5) at (Q8-1);

\path[name intersections={of=C2 and D5, name=Q9, total=\tii}];
\coordinate (R5-1) at (Q9-1);
\coordinate (R5-6) at (Q9-2);
\path[name intersections={of=C3 and D5, name=Q10, total=\tii}];
\coordinate (R5-3) at (Q10-2);
\coordinate (R5-4) at (Q10-1);

\path[name intersections={of=C1 and D6, name=Q11, total=\tii}];
\coordinate (R6-3) at (Q11-1);
\coordinate (R6-4) at (Q11-2);
\path[name intersections={of=C2 and D6, name=Q12, total=\tii}];
\coordinate (R6-5) at (Q12-2);
\coordinate (R6-2) at (Q12-1);
}

\newcommand\dsa[4]{
\draw #1--#2;
\draw #3-- #4;
\draw[line width=1.2,red] ($.5*#1+.5*#2$) -- 
($.5*#3+.5*#4$);
\fill ($.5*#1+.5*#2$) circle [radius=.1] ;
\fill ($.5*#3+.5*#4$)circle [radius=.1] ;
}
 
\title{On generic singularities of intersections of ellipsoids: the octahedron}    \author[E. Artal]{Enrique Artal Bartolo}
\address[E.~Artal]{Departamento de Matem\'{a}ticas, IUMA \\
Universidad de Zaragoza \\
C.~Pedro Cerbuna 12, 50009, Zaragoza, Spain}
\urladdr{\url{http://riemann.unizar.es/~artal}}
\email{\href{mailto:artal@unizar.es}{artal@unizar.es}}

\author[S. L\'{o}pez de Medrano]{Santiago L\'{o}pez de Medrano}
\address[S. L\'{o}pez de Medrano]{Instituto de Matem\'{a}ticas\\
Universidad Nacional Aut\'{o}noma de M\'{e}xico\\
04510 Ciudad de M\'{e}xico, M\'{e}xico}
\email{\href{mailto:santiago@matem.unam.mx}{santiago@im.unam.mx}}

\author[M.T. Lozano]{Mar\'{\i}a Teresa Lozano}
\address[M.T. Lozano]{Departamento de Matem\'{a}ticas, IUMA \\
Universidad de Zaragoza \\
C.~Pedro Cerbuna 12, 50009, Zaragoza, Spain}
\email{\href{mailto:tlozano@unizar.es}{tlozano@unizar.es}}

\thanks{\noindent Partially supported by MCIN/AEI/10.13039/501100011033 (grant code: PID2020-114750GB-C31)
and by Departamento de Ciencia, Universidad y Sociedad del Conocimiento del Gobierno de Arag{\'o}n
(grant code: E22\_20R: ``{\'A}lgebra y Geometr{\'i}a''). The two first named authors were also partially supported by UNAM-
Papiit grants IN102918 and IN106324.}
\date{\today}

\begin{document}

\begin{abstract}
The goal of this work is to study the smoothings of singular coaxial
intersections of ellipsoids (where coaxial includes concentric) with
generic singularities, with special attention to the 3-dimensional
case.
\end{abstract}

\maketitle

\section*{Introduction}

In a series of papers, the second named author has extensively studied the intersections of coaxial ellipsoids in $\R^n$ (we will refer to them simply
as \emph{intersections} to simplify the text), specially the smooth ones; see~\cite{LdM2023} and the references therein.
In \cite{LdM2014,LdM-Pepe}, he also
studied some singular ones and in this work we continue their study.
We
are mostly interested in the case of isolated singularities and, more
precisely, in the mildest ones (called \emph{generic}), which are the
analogue to ordinary double points in complex singularity theory. In future works
we will deal with more general singularities.

We will recall how one can express the existence and type of a singularity in terms of the tuples of points defining the equations of the intersection, see~\eqref{eq:inter-1}.
Also, we will recall the definition of the associated polytope of an intersection which in the case of smooth intersections is \emph{simple} \footnote{A vertex of a polytope of dimension $d$ is simple if it is contained in exactly $d$ facets and the polytope is simple is all its vertices are simple}.

The main goal of this work is to study the deformations of a singular
intersection, more precisely, the deformations of its equations giving
nearby smooth intersections and less degenerate singular ones. The
\emph{smoothings} correspond to the connected components of the space of
a deformation consisting only of smooth intersections.

After developing the basic facts about the theory in all dimensions we turn our attention to dimension 3 as we did in
\cite{ALdML:16} for the smooth case. The vertices of a polytope associated to a smooth $3$-intersections are simple and each one lies in exactly three coordinate hyperplanes. The generic singularities of $3$-dimensional intersections come from vertices of the polytope that lie in four coordinate hyperplanes. There are two types of them,  those for which the vertex is simple and those where the vertex lies in~$4$ faces of the polytope, called a $4$-vertex for now. The singularities of the intersections coming from simple or $4$-vertices are  cones over $\mathbb{S}^0\times\mathbb{S}^2$ or $\mathbb{S}^1\times\mathbb{S}^1$, respectively.

Since we will be interested mainly in the simplest cases associated to a
given polyhedron, we will study in detail only examples of the second
type. We will begin by considering the associated intersection of the
simplest one, the quadrangular pyramid. We give a detailed study of the
topology of the singular intersection, its smooth part, and its smoothings.

Two other polyhedra are extensively studied in the work: the
triangular bipyramid and the octahedron. As for the quadrangular pyramid,
we study the topology of their singular intersections, their smooth parts, and their smoothings.
The intersection of the triangular bypiramid has~$12$ singular points and its
homology can be computed using \cite[Theorem~5.21]{LdM2023}:
$H_1$ is trivial and $H_2\cong\mathbb{Z}^{12}$; actually the intersection
is simply connected.

The two possible smoothings are completely described. We pay special attention
to the smooth part of the intersection. It can be described as a Galois cover of the complement
of a simple link in $(\mathbb{S}^1)^3$, and it admits a complete hyperbolic metric, which turns out to
be a topological invariant. This complete hyperbolic manifold has~$12$ ends and 
the singular intersection is a compactification obtained by adding one point to each end.

Since the smooth part of the intersection associated to the
quadrangular bipyramid is homeomorphic to $(\mathbb{S}^1)^2\times\R$, a similar property holds:
it admits a complete euclidean metric and the singular intersection, the suspension
over the torus, is obtained by compactifying the two ends.

The study of the intersection associated to the octahedron is more involved.
The singular intersection has~$96$ singular points. Some data about is topology can be found
in Proposition~\ref{prop:groups-octa}, namely, its fundamental group is $\mathbb{Z}^4$.
As for the triangular bipyramid the smooth part of the intersection
admits also a complete hyperbolic structure with $96$~ends: as before the singular
intersection is obtained by compactifying these ends. Finally, we describe the topology
of the smoothings, obtaining five $3$-manifolds. Three of them are connected sums
of products of spheres; one of these connected sums is obtained through two
combinatorially distinct simple polytopes. A fourth smoothing is the product
of $\mathbb{S}^1$ and a surface of genus~$17$. The last one is a graph manifold
in the sense of Waldhausen. There are two families of distinct links with $96$~components
such that the contraction of each component gives the singular intersection.

An intersection and its polytope are related by a natural orbifold structure
in the polytope, where all the faces are mirrors. The orbifold structure
of the polytope minus the $4$-vertices makes the connection with the geometric
structures via the orbifold structures induced by suitable tessellations (hyperbolic
for the triangular bipyramid and the octahedron and euclidean for the quadrangular
pyramid). These ideas can be extended to other polytopes, e.g., the rhombic dodecahedron.
The three hyperbolic structures on the
triangular bipyramid, the octahedron, and the rhombic dodecahedron,
have a common ancestor in a special tetrahedron, which allows us to compare
the volume of the corresponding hyperbolic manifolds, an important
topological invariant.

One last word about homeomorphism and diffeomorphism. In the smooth case a deformation of the intersection that does not include singular ones preserves its differentiable type. The same seems to be the case when there are only generic singularities and the deformation preserves them as such. But in the general case a deformation that preserves the singularities and their types may preserve only the topological type of the intersection.

In \S\ref{sec:settings} and \S\ref{sec:sing-settings} we introduce the main notions in the paper, including illustrating examples.
Deformations
and smoothings are introduced in \S\ref{sec:smoothings}.
In \S\ref{sec:3smoothings} we present a detailed description of the smoothings
of the intersections associated with the quadrangular pyramid and the triangular bipyramid, including the surgeries related them.
The section~\S\ref{sec:octahedron} is devoted to the study of the smoothings of the intersection associated with the octahedron,
listing the possible ones, their relations, and the geometric properties. In the last section \S\ref{sec:hyperbolic}
we continue the exploration of the relationship of the intersections and some complete hyperbolic manifolds and orbifolds.
 
\section{Settings}
\label{sec:settings}

Let us consider the intersection $Z$ of the points $(x_1,\dots,x_n)\in\R^n$
satisfying
\begin{equation}\label{eq:inter-1}
\sum_{i=1}^{n} A_i x_i^2=0,\qquad
\sum_{i=1}^{n} x_i^2=1,\qquad \text{with}\quad  A_i \in  \R^{m},
\end{equation}
where the first~$m$ equations define a cone and the last one a sphere.
To see that such an intersection coincides with an intersection of coaxial (and concentric) ellipsoids, up to a diagonal automorphism of $\R^n$ and permutation of variables, it is enough to add large enough multiples of the last equation to the $m$ homogeneous ones. Reciprocally, any coaxial collection of $m+1$ ellipsoids with common center at the origin can be put in the above form.
The collection of ellipsoids is determined by the
$n$-tuple $\multiconj:=(A_1,\dots,A_n)\subset(\R^m)^n$.

\begin{ntc}
Up to permutation,
we can represent the tuple $\multiconj$ as $(v_1^{m_1},\dots,v_r^{m_r})$, where $v_1,\dots,v_r\in\R^m$ and the superindices indicate the repetitions of these vectors, i.e., $m_1+\dots + m_r=n$.
\end{ntc}

Let $Z_{\geq 0}$ be the intersection of $Z$ with the first orthant $\R^n_{\geq 0}$ of $\R^n$.
The \emph{variety} $Z$ can be reconstructed from $Z_{\geq 0}$ via
the reflections through the coordinate hyperplanes. The intersection $Z_{\geq 0}$ is homeomorphic to the polytope
\begin{equation}\label{eq:poly}
P:=\left\{(r_1,\dots,r_n)\in\R^n_{\geq 0}
\vphantom{\sum_{i=1}^n}\right.\left| \sum_{i=1}^n A_i r_i=0, \sum_{i=1}^n r_i=1\right\}.
\end{equation}
The union of the reflections  of $P$ through the coordinate hyperplanes is also homeomorphic to $Z$.

\begin{dfn}\label{dgenesing}
An intersection of ellipsoids determined by an $n$-tuple
$\multiconj=(A_1,\dots,A_n)$ in $\R^m$ satisfies the property of
\emph{weak hyperbolicity} or (WH) \footnote{The concept of weak hyperbolicity (WH) (and its equivalence with the transversality of the intersection) was introduced and named by Marc Chaperon~\cite{chaperon} and independently discovered (but not named) in \cite{wall} and \cite{LdM1989} for the case $m=2$.}
if the origin is not a convex combination of a subset of $\leq m$ vectors of $\multiconj$. 
\end{dfn}

This condition is equivalent to the transversality of the equations
and therefore implies that $Z$ is smooth, $P$ is a simple
polytope and that a small deformation of the $n$-tuple will give a
diffeomorphic intersection and a combinatorially equivalent polytope.

When (WH) holds, the faces of $P$ correspond to its non-empty
intersections with the coordinate subspaces and will have the expected
dimension given by transversality. In particular, $\dim Z= \dim P =n-
m-1$. For a deeper understanding of the general case we introduce
now another way to relate $\multiconj$ and $P$ that includes explicitly the
degree of singularity of each face of the polytope.

\begin{dfn}
Let $v_1,\dots,v_r\in\R^m$. A convex combination
$t_1 v_1+\dots+t_r v_r$, $t_i\in\R_{\geq 0}$, $t_1+\dots+t_r=1$,
is \emph{proper} if all the coefficients are positive.
\end{dfn}

Let us denote $[n]:=\{1,\dots,n\}$. For each $J\subset[n]$, we define
\[
C_J:=\left\{(r_1,\dots,r_n)\in P
\right|\left.
r_j>0\text{ if }j\in J, r_j=0\text{ if }j\notin J
\right\}.
\]
We denote by $\novacio_\multiconj$ the set of subsets of $[n]$ such that
$C_J\neq\emptyset$. For $J\in \novacio_\multiconj$, let us denote the closure
$F_J:=\overline{C_J}$.

\begin{lema}\label{lema:vec_poly0}
The faces of the polytope $P$ are $\{F_J\mid J\in \novacio_\multiconj\}$.
Moreover, the dimension of the space of convex combinations of the tuple
$\multiconj_J:=(A_i\mid i\in J)$ is the dimension of the associated face~$F_J$.
\end{lema}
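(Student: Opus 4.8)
The plan is to recognize $P$ as a polytope presented in ``standard form'', namely as the intersection of an affine subspace with the closed orthant, and then to read its whole face lattice off the supports of its points. First I note that $P$ is the intersection of the affine flat $W:=\{r\in\R^n\mid\sum_i A_ir_i=0,\ \sum_i r_i=1\}$ with $\R^n_{\ge0}$, and that it is bounded, being contained in the standard simplex; hence $P$ is a polytope. For each $i\in[n]$ the set $P\cap\{r_i=0\}$ is a face of $P$, since it is the locus where the linear functional $r\mapsto-r_i$ attains its maximum $0$ on $P$; as a finite intersection of faces is a face, $G_J:=P\cap\bigcap_{i\notin J}\{r_i=0\}$ is a face of $P$ for every $J\subseteq[n]$.

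The core step is to prove, for $J\in\novacio_\multiconj$, that $F_J\ (=\overline{C_J})$ coincides with $G_J$ and that $\operatorname{relint}F_J=C_J$. The inclusion $\overline{C_J}\subseteq G_J$ is clear, $G_J$ being closed and containing $C_J$. For the reverse inclusion I fix a point $x\in C_J$ (which exists because $J\in\novacio_\multiconj$): for any $y\in G_J$ the segment $(1-\varepsilon)y+\varepsilon x$ stays in $P$ and has support exactly $J$ for $\varepsilon\in(0,1]$ (the coordinates indexed by $J$ become strictly positive, those outside remain $0$), so letting $\varepsilon\to0$ yields $y\in\overline{C_J}$; hence $F_J=G_J$ is a face of $P$. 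Moreover $C_J$ is relatively open in $\operatorname{aff}F_J=W\cap\{r_i=0\mid i\notin J\}$, while $F_J\setminus C_J=\bigcup_{i\in J}\bigl(F_J\cap\{r_i=0\}\bigr)$ is a union of proper faces of $F_J$ (proper because $x\in C_J$); hence $C_J=\operatorname{relint}F_J$. Consequently distinct $J$ give distinct faces $F_J$; and conversely every nonempty face $F$ of $P$ has a relative interior point $x$, whose support $J$ lies in $\novacio_\multiconj$ and satisfies $x\in C_J=\operatorname{relint}F_J$, so, because the relative interiors of the faces of $P$ partition $P$, we get $F=F_J$. This proves the first assertion.

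For the dimension statement, $\dim F_J=\dim C_J$ since $C_J=\operatorname{relint}F_J$. The coordinate projection $\pi_J\colon\R^n\to\R^J$ deleting the entries outside $J$ restricts to an affine isomorphism of $F_J$ onto $\{t\in\R^J_{\ge0}\mid\sum_{i\in J}t_i=1,\ \sum_{i\in J}t_iA_i=0\}$, which is precisely the space of convex combinations of the tuple $\multiconj_J$ that are equal to the origin; hence $\dim F_J$ equals the dimension of that space, both being $|J|-\operatorname{rank}\bigl((A_i,1)_{i\in J}\bigr)$. (Note that this generally differs from $\dim\operatorname{conv}\{A_i\mid i\in J\}=\operatorname{rank}\bigl((A_i,1)_{i\in J}\bigr)-1$, so ``the space of convex combinations'' must be read as the set of coefficient tuples representing the origin, not the convex hull of the $A_i$ themselves.) The whole argument is standard polytope theory; the one point needing genuine care is the identification $\operatorname{relint}F_J=C_J$ — equivalently, that the face lattice of $P$ is controlled entirely by the supports of its points — since everything else is support bookkeeping built on top of it.
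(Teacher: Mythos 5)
Your proof is correct. Note that the paper states this lemma without giving any proof, so there is nothing to compare against; what you supply is the standard support-based description of the face lattice of a polytope presented as an affine flat intersected with the nonnegative orthant, and it is complete: the identification $F_J=P\cap\bigcap_{i\notin J}\{r_i=0\}$, the key equality $C_J=\operatorname{relint}F_J$, and the partition-by-relative-interiors argument are all carried out correctly, and the coordinate projection $\pi_J$ handles the dimension claim. You also resolve the one real ambiguity in the statement in the way the paper intends: ``the space of convex combinations of $\multiconj_J$'' must be read as the set of coefficient tuples $t\in\R^J_{\ge 0}$ with $\sum_{i\in J}t_i=1$ and $\sum_{i\in J}t_iA_i=0$ (the convex-hull reading would already fail for $F_{12}$ in Example~\ref{ejm:m=1}), and this is the reading used in the subsequent remark on (WH) and on vertices.
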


\begin{remark}
(WH) is equivalent
to the following condition:
$\forall J\in \novacio_\multiconj$
\(
\dim F_J+m+1=\#S.
\)
In particular, the vertices of $P$ are associated to sets $J$ of cardinality $m+1$ for which the origin is a proper convex combination of $\multiconj_J$ in a unique way, i.e., $\multiconj_J$ is affinely independent.
\end{remark}

\begin{remark}
A tuple can be deformed by multiplying each $A_i$  by a positive real number~$c_i$. The differential topology of $Z$ is preserved: starting with the equations of $Z$ a diagonal linear change of coordinates in $R^{n}$ will give the new homogeneous equations while changing the equation of the sphere into that of an ellipsoid. Then the intersection of the new cone with this ellipsoid is radially diffeomorphic to its intersection with the unit sphere. So we can assume, for example, that all non-zero $A_i$ belong to the unit sphere in~$\R^m$.
\end{remark}

\begin{ejm}\label{ejm:m=1-liso}
Let us consider $m=1$ and $\multiconj$ satisfying (WH).
Then we can deform $\multiconj$ without changing the topology
in order to have $\multiconj:=((-1)^p,(1)^q)$.
Simple
algebraic operations show that $Z$ is diffeomorphic
to $\mathbb{S}^{p-1}\times\mathbb{S}^{q-1}$ and $P$ is
$\Delta_{p-1}\times\Delta_{q-1}$.
\end{ejm}

\begin{ejm}\label{ejm:m=2-liso}
The case $m=2$ and $\multiconj$ satisfying (WH) can be also easily
described. After deformation,
$\multiconj=(P_1^{r_1},\dots,P_{2k+1}^{r_{2k+1}})$, $k>0$ and
$n=r_1+\dots+r_{2k+1}$,
where $(P_1,\dots,P_{2k+1})$ are equidistributed in the unit circle~\cite{LdM1989}.

If $k=1$, $Z$ is $\mathbb{S}^{r_1-1}\times\mathbb{S}^{r_2-1}\times\mathbb{S}^{r_3-1}$.
If $k>1$ it is a connected sum of products of spheres. For example, for $k=2$ and $r_i=1$, $Z$
is the compact oriented surface of genus~$5$.
\end{ejm}

\begin{ejm}\label{ejm:m=2-liso-total}
There is also a complete classification of smooth connected intersections in dimension~$2$. We obtain
the surfaces of genus~$g_n=2^{n-3}(n-4)+1$ for $n\geq 3$~\cite{LdM2023}.
The fact that these genera appear as intersection of quadrics was communicated
to the second named author by F.~Hirzebruch.
\end{ejm}

We are going now to set three conditions on the intersections which
reduce the number of cases that need to be studied. We state them in
two versions which we check in each case that they are equivalent.

\begin{enumerate}[label=\rm(N\arabic{enumi})]
\item\label{N1}
There are no redundant equations. In terms of $\multiconj$, it generates $\R^m$ as a vector space.

\begin{proof} The equivalence of the two statements in \ref{N1} is because
if the second statement of~\ref{N1} does not hold, let $H\subsetneq\R^m$ be the subspace generated by $\multiconj$, $m':=\dim H<m$. After a linear automorphism of~$\R^m$, we may assume that the last~$m-m'$ coordinates of these vectors vanish. Let $\multiconj'\subset\R^{m'}$ be the tuple obtained by forgetting these last coordinates. The intersections associated to $\multiconj$ and $\multiconj'$ coincide and the only difference is the number of equations.
\end{proof}
From now on, we will assume
\ref{N1} holds.

\item\label{N2} The intersection $Z$ is not contained in a coordinate hyperplane $x_i=0$. In terms of tuples, $\multiconj$ is not contained in a closed half-space of $\R^m$
whose boundary is a linear hyperplane.

\begin{proof}The equivalence of the two statements in \ref{N2} is because
if the second statement of  \ref{N2} does not hold, after a linear automorphism of $\R^m$ we may assume that the $m^{\text{th}}$-coordinates of the points in $\multiconj$ are $\geq 0$.
As \ref{N1} holds, at least one  $m^{\text{th}}$-coordinate must be positive, say for $A_i$;
then $Z\subset\{x_i=0\}$, and we can see this intersection in~$\R^{n-1}$.
\end{proof}
From now on, we will assume
\ref{N2} holds.

It is obvious that if $\multiconj$ is contained in an open half-space whose boundary is a linear hyperplane of $\R^m$,
then the intersection is empty.

\item\label{N3} The intersection $Z$ cuts all the coordinate hyperplanes.
For all $i=1,\dots,n$,
the origin is a convex combination of $(A_1,\dots,\widehat{A_i},\dots,A_n)$.

\begin{proof} The equivalence of the two statements in \ref{N3} is because
if the second statement of  \ref{N3} does not hold, let us assume that whenever the origin can be expressed as a convex combination of $\multiconj$,
the coefficient of $A_j$ does not vanish. With these conditions $Z\cap\{x_j=0\}=\emptyset$.
The projection $Z_j$ of $Z$ in $\{x_j=0\}$ is also an
intersection and $Z\cong Z_j\times\mathbb{S}^0$.
\end{proof}

From now on, we will assume
\ref{N3} holds, unless otherwise stated.
\end{enumerate}

Sometimes we will start from an $d$-dimensional polytope~$P\subset\R^d$
defined by a family $\mathcal{I}$ of $n$~affine inequalities
$p_i(t_1,\dots,t_d)\geq 0$, $i=1,\dots,n$, which define the polytope~$P$.
The image of the affine map $(t_1,\dots,t_d)\mapsto(p_1,\dots,p_n)$
intersects the first orthant of $\R^m$ in the image of $P$
and satisfies affine equations that can be put in the standard form~\eqref{eq:poly}.

When $n$ is the number of facets of $P$ then $P$ gets embedded in $\R^n$
with a facet in each hyperplane we get the best embedding of~$P$, usually known as its \emph{geometric embedding},
that reflects faithfully
(and is essentially determined by) the combinatorics of $P$.

\section{Singular intersections: settings}
\label{sec:sing-settings}

If (WH) does not hold the intersection is not transverse and $Z$ will be singular as an algebraic variety.
The easiest case where (WH) does not hold is when $\mathbf{0}\in\multiconj$ and $m>0$. There is a nice description of the topology of $Z$ and the combinatorics
of $P$ in this case in terms of the one given by $\multiconj\setminus\{\mathbf{0}\}$:

\begin{proposition}[{\cite[p. 162]{LdM-Pepe}, \cite[Example 5.7]{LdM2023}}]
\label{prop:cono}
Let $\multiconj_1$ be a tuple in $\R^{m}$ and let
$\multiconj_2=\multiconj_1\cup(\mathbf{0})$. Let $P_i,Z_i$ be the polytopes
and intersections associated to each tuple. Then,
$P_2$ is the cone over $P_1$ and $Z_2$ is the suspension of $Z_1$.
\end{proposition}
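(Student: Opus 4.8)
The plan is to analyze the two systems of equations side by side and produce an explicit homeomorphism. Write $\multiconj_1=(A_1,\dots,A_{n})$ in $\R^m$, so that $\multiconj_2=(A_1,\dots,A_n,\mathbf{0})$ lives in $\R^m$ as well with $n+1$ vectors; the new variable will be $x_{n+1}$. The intersection $Z_2\subset\R^{n+1}$ is cut out by $\sum_{i=1}^n A_i x_i^2+\mathbf{0}\cdot x_{n+1}^2=0$ (i.e.\ $\sum_{i=1}^n A_i x_i^2=0$) together with $\sum_{i=1}^n x_i^2+x_{n+1}^2=1$. Likewise $P_2\subset\R^{n+1}_{\ge0}$ is defined by $\sum_{i=1}^n A_i r_i=0$ and $\sum_{i=1}^n r_i+r_{n+1}=1$. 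The crucial observation is that in both cases the extra coordinate is a ``free slack variable'': the homogeneous equations do not see $x_{n+1}$ (resp.\ $r_{n+1}$) at all, and the sphere/simplex equation merely records $x_{n+1}^2=1-\sum_{i=1}^n x_i^2$ (resp.\ $r_{n+1}=1-\sum_{i=1}^n r_i$).

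First I would treat the polytope statement, which is the easy half. Projecting $P_2$ onto the first $n$ coordinates identifies it with $\{(r_1,\dots,r_n)\in\R^n_{\ge0}\mid \sum A_i r_i=0,\ \sum r_i\le1\}$, and this is precisely the cone with apex the origin over the slice $\sum r_i=1$, which is $P_1$. (Concretely, a point of $P_2$ with $r_{n+1}=1-s$, $s\in[0,1]$, projects to $s$ times a point of $P_1$ when $s>0$, and to the origin when $s=0$; the origin lies in the affine span here because $\sum A_i\cdot 0=0$.) So $P_2$ is the cone over $P_1$; note this also matches Lemma~\ref{lema:vec_poly0}, since adding $\mathbf 0$ to a tuple raises by one the dimension of the space of convex combinations whenever the origin already appears with some positive weight.

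Next, the intersection statement. Here I would use the map $Z_2\to Z_1\times[-1,1]$ sending $(x_1,\dots,x_n,x_{n+1})$ to $\bigl(\tfrac{1}{\sqrt{1-x_{n+1}^2}}(x_1,\dots,x_n),\,x_{n+1}\bigr)$ whenever $x_{n+1}^2<1$. Since $\sum_{i=1}^n A_i x_i^2=0$ is homogeneous of degree $2$, the rescaled point $\tfrac{1}{\sqrt{1-x_{n+1}^2}}(x_1,\dots,x_n)$ still satisfies it, and it now has norm $1$, so it lies on $Z_1$; conversely given $(y,t)\in Z_1\times(-1,1)$ the point $(\sqrt{1-t^2}\,y,\,t)$ lies in $Z_2$. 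This is a homeomorphism $Z_2\cap\{|x_{n+1}|<1\}\cong Z_1\times(-1,1)$, and the two points $x_{n+1}=\pm1$ (forcing $x_1=\dots=x_n=0$, which is automatically a solution of the homogeneous system) are exactly the two suspension points; a routine check that the rescaling extends continuously to these cone points — equivalently, that a neighborhood of $x_{n+1}=1$ in $Z_2$ is a cone on $Z_1$ — finishes the identification of $Z_2$ with the (unreduced) suspension $\Sigma Z_1$.

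The only real subtlety, and the step I expect to be the main obstacle, is the behavior at the two poles $x_{n+1}=\pm1$: there the radial rescaling by $(1-x_{n+1}^2)^{-1/2}$ blows up, so one must argue directly that $Z_2$ near such a pole is homeomorphic to an open cone on $Z_1$ rather than relying on the product chart. The clean way to see this is to parametrize a half of $Z_2$ by $\{(u_1,\dots,u_n,s)\in\R^{n+1}: \sum A_i u_i^2=0,\ \sum u_i^2=s^2,\ 0\le s\le1\}$ via $u_i=x_i$, $s=\sqrt{1-x_{n+1}^2}$, and recognize this set as the mapping cylinder collapse of the sphere-bundle picture to its $s=0$ fiber, i.e.\ the cone over $Z_1$ with the origin as apex; gluing two such cones along $Z_1=\{s=1\}$ gives the suspension. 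This is exactly the computation carried out for such ``adding a zero vector'' operations in the references cited in the statement, so I would invoke \cite{LdM-Pepe} and \cite[Example 5.7]{LdM2023} for the detailed check while presenting the explicit maps above as the conceptual skeleton.
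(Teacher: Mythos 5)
Your argument is correct. Note that the paper does not actually prove Proposition~\ref{prop:cono}: it is imported from the cited references (\cite[p.~162]{LdM-Pepe}, \cite[Example 5.7]{LdM2023}), so your write-up is a self-contained substitute rather than a variant of an argument in the text; what it buys is an explicit homeomorphism in the coordinates of \eqref{eq:inter-1} and \eqref{eq:poly}, which is exactly in the spirit of how those references treat the ``add a zero vector'' operation. Both halves are sound: the projection onto the first $n$ coordinates identifies $P_2$ with $\{r\in\R^n_{\geq0}\mid \sum A_ir_i=0,\ \sum r_i\leq 1\}$, which is the cone on $P_1$, and the rescaling chart identifies $Z_2\cap\{|x_{n+1}|<1\}$ with $Z_1\times(-1,1)$. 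You are right that the only delicate point is at the poles, and your cone parametrization $u=x$, $s=\sqrt{1-x_{n+1}^2}$ of each half of $Z_2$ handles it; just be careful with the phrase ``the rescaling extends continuously to these cone points'', since the inverse of the product chart does \emph{not} extend there (the $Z_1$-coordinate is undefined at the poles). The cleanest way to finish, equivalent to what you do, is to observe that the map $Z_1\times[-1,1]\to Z_2$, $(y,t)\mapsto(\sqrt{1-t^2}\,y,\,t)$, is a continuous surjection from a compact space to a Hausdorff space which identifies exactly $Z_1\times\{1\}$ and $Z_1\times\{-1\}$ to the two poles; being a closed quotient map, it exhibits $Z_2$ as the unreduced suspension of $Z_1$, with the two halves $x_{n+1}\gtrless 0$ being the two cones glued along $Z_1=\{x_{n+1}=0\}$, matching the paper's remark that each singular point has a neighborhood which is a cone on $Z_1$.
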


Hence given an intersection $Z$, its suspension is singular (except if $Z=\mathbb{S}^{n-1}$).
If $Z$ is smooth, its suspension has two singular points, each with a neighborhood which is a cone on $Z$.

For $m=0$, there is no singular intersection. For $m=1$, they can be classified since the operation of adding $\mathbf{0}$ is the only one that may induce a singularity.

\begin{ejm}\label{ejm:m=1}
Let
$m=1$ and $\multiconj=(-1,1,0)$. We have three subsets of $S$
with proper convex combinations:
$S_0:=\{0\}$, yielding a vertex $F_3:=(0,0,1)$;
$S_1:=\{1,-1\}$ yielding a vertex $F_{12}=\left(\frac{1}{2},\frac{1}{2},0\right)$;
$S_2:=\multiconj$ yielding the edge $F_{123}$ which coincides with the whole polytope $P$:
\begin{equation*}
P:\
-r_1+r_2+0 =0,\qquad
  r_1+r_2+r_3 =1.
\end{equation*}
Note that this is a non-geometric embedding of $[0,1]\subset\R$. Looking at the numerical invariants there is only a unique singularity at $F_3$.
The intersection $Z$ is given by
\begin{equation*}
Z:\
  -x_1^2+x_2^2+0 =0,\qquad
  x_1^2+x_2^2+x_3^2 =1,
\end{equation*}
which is the union of two orthogonal
meridians of the sphere~$\mathbb{S}^2$ (so it is the suspension of its four points of intersection with
the equator), its
two singular points are the poles and is homeomorphic to the complete bipartite graph $K_{2,4}$ (Figure~\ref{fk24}).

\begin{figure}[ht]
\begin{center}
﻿\begin{tikzpicture}
\coordinate (O) at (0,0);
\coordinate (X) at (-135:{sqrt(2)/2});
\coordinate (Y) at (0:1);
\coordinate (Z) at (90:1.25);
\coordinate (T) at (.5,-.5);
\coordinate (P) at (0,.75);

\draw[->] (O) -- (X) node[left] {$r_1$};
\draw[->] (O) -- (Y) node[right] {$r_2$};
\draw[->] (O) -- (Z) node[above] {$r_3$};
\draw (P) node[left] {$\langle1\rangle$}-- node[right,pos=.35] {$P$} (T);
\fill (T) circle [radius=.075];
\fill (P) circle [radius=.075];

\begin{scope}[xshift=5cm, yshift=0.45cm, scale=.65, rotate=90]
\foreach \a in {1,2}
{
\coordinate (A\a) at ({2*\a-3},0);
\fill (A\a) circle [radius=.075];
}
\foreach \b in {1,...,4}
{
\coordinate (B\b) at (0,{\b-2.5});
\fill (B\b) circle [radius=.075];
}
\foreach \a in {1,2}
\foreach \b in {1,...,4}
{
\draw (A\a) -- (B\b);
}

\node at (0,-3) {$K_{2,4}\cong Z$};

\node at (0,-6) {\includegraphics[height=2.5cm]{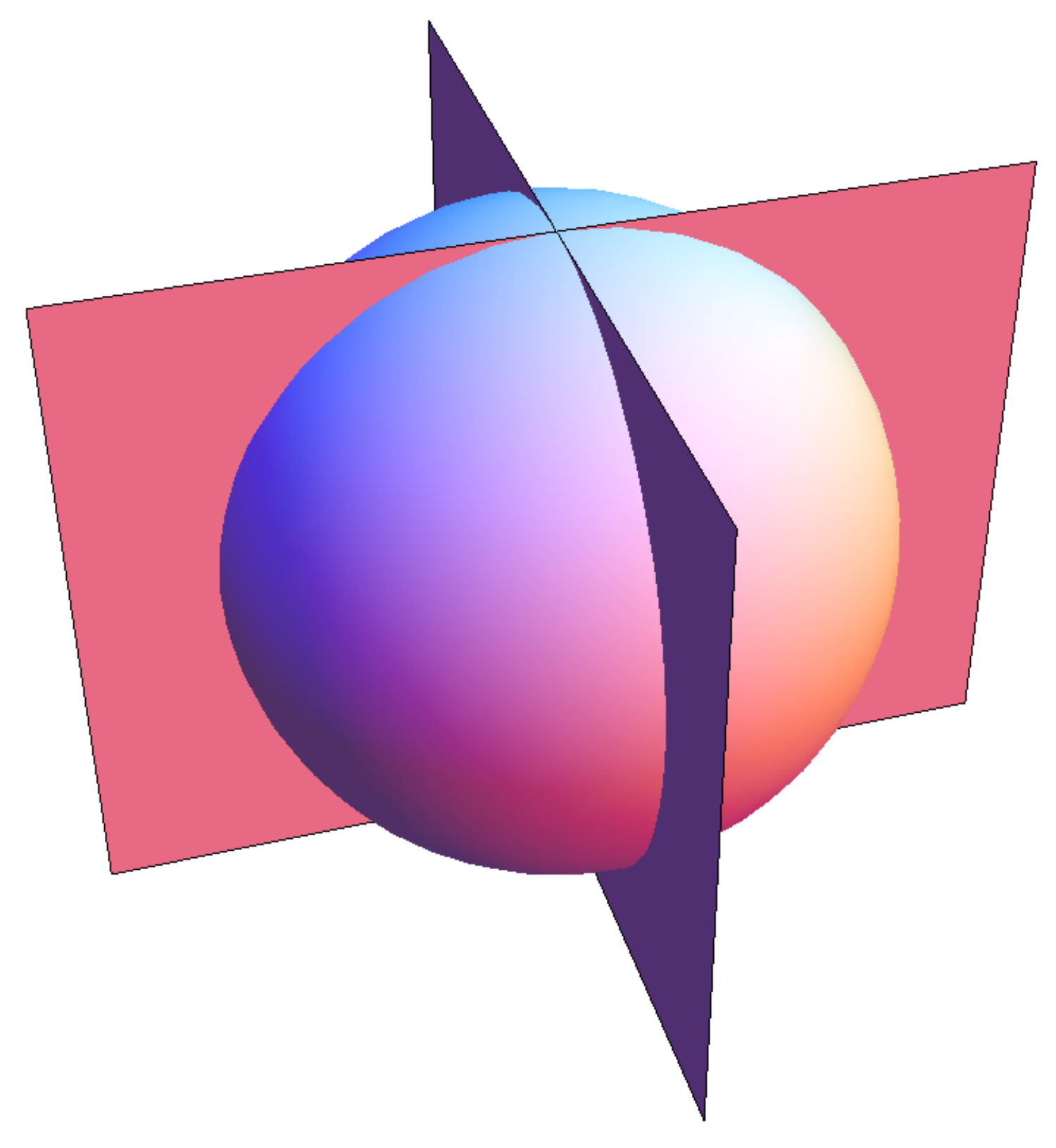}};
\end{scope}
\end{tikzpicture}
 \caption{The polytope $P$ (see Remark~\ref{rem:decoration} for the decoration), its reflection on the hyperplanes $K_{2,4}$ homeomorphic
to $Z$.}
\label{fk24}
\end{center}
\end{figure}
\end{ejm}

\begin{ejm}\label{ejm:sing-m=1}
This can be generalized as follows. Let $\multiconj:=((-1)^p, (1)^q, (0))$.
The polytope $P$
is the cone over $\Delta_{p-1} \times \Delta_{q-1}$ (its
intersection with the hyperplane $r_n=0$, recall Example~\ref{ejm:m=1-liso}
and
Proposition~\ref{prop:cono}) and apex in the $r_n$ axis, which is its only singular
point. In the same way, $Z$ is the suspension of $\mathbb{S}^{p-1}\times\mathbb{S}^{q-1}$.
\end{ejm}

We may consider also the case with several $0$'s; the polytope
will be obtained by iterated cones and the intersection by iterated suspensions.

\begin{remark}
For $p=q=2$, we obtain $P$ is the square pyramid and $Z$ is the suspension of the torus.
Note that in this case the polytope is not simple since one of the vertices fails to be simple.
\end{remark}

\begin{dfn}\label{def:sing}
We
say that $P$ is \emph{singular} at $F_S$
of \emph{depth}
$d_{F_S}:=\dim F_S-\#S+m+1$  (or $d_S$ for simplicity) if
$d_{F_S}>0$,
i.e., $\dim F_S\geq\#S-m$.
\end{dfn}

\begin{remark}
Under (WH), a continuous deformation of  the points $(A_1,\dots,A_n)$, as long as the condition (WH) is preserved, does not affect the combinatorics of $P$ nor the differential topology of $Z$, by Ehresmann type arguments. Without (WH), as far as a deformation is \emph{equisingular}, the topology of $Z$ does not change.

In terms of the polyhedron~$P$, \emph{equisingular} means that the combinatorics of $P$, including its relative
position with respect to the hyperplanes, does not change. In terms of $\multiconj$, \emph{equisingular} means
that the dimension of the spaces of proper convex combinations of the origin in $\R^m$ remain constant.
Note that the diffeomorphism class may change.
\end{remark}

\begin{remark}\label{rem:decoration}
Note that the singularities of a polytope~$P$ depend on its relative position with respect to the coordinate hyperplanes. In order to exhibit the singular faces,
we will decorate the faces with their depth.
For the sake of simplicity we may omit it whenever the depth
is the minimal possible one from the combinatorics of $P$, i.e.,
when the face is \emph{geometrically embedded}; in particular
for non-singular faces, the depth is zero and it will be usually omitted.
\end{remark}

\begin{ejm}
Let $m=1$ and $\multiconj=(-1,1,0^2)$. We obtain the polytope in Figure~\ref{fig:triang_d1}, where
the edge $F_{34}$ and its two vertices are singular of depth $1$.
It is not hard to check that $Z$ is homeomorphic to the union of two spheres with a common equator.

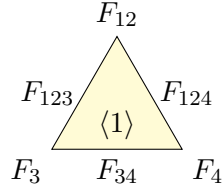
\begin{figure}[ht]
\centering
\begin{tikzpicture}
\filldraw[fill=white!80!yellow] (90:1) node[above] {$F_{12}$} -- node[left] {$F_{123}$}
(210:1)  node[below left] {$F_{3}$} -- node[below] {$F_{34}$}
node[above] {$\langle 1 \rangle$}
(-30:1) node[below right] {$F_{4}$} -- node[right] {$F_{124}$}
cycle;
\end{tikzpicture}
 \caption{Triangle with a singular edge}
\label{fig:triang_d1}
\end{figure}

\end{ejm}

\begin{dfn}\label{def:sing-isolated}
Let $v=F_S$ be a vertex of $P$. We say that
$P$ has an \emph{isolated singularity} at $v$
if $P$ is singular at~$v$ and not singular at the faces containing $v$.
We say that $P$ has a \emph{generic singularity}
at $v$ if moreover $0=\dim F_S=\#S-m$,
i.e. $\#S=m$  and $d_S=1$.
\end{dfn}

The tuple $\multiconj_S:=(A_j\mid j\in S)$ affinely generates
a subspace of dimension~$m-1$ and the origin is
in the interior of the convex closure of $\multiconj_S$.

\begin{ejm}
Let us consider a pyramid $P\subset\R^{n+1}$ with base an $n$-gon and such
that each face is the intersection with one coordinate hyperplane. For $n\geq 4$
it corresponds to an isolated singularity, only generic if $n=4$.
The intersections in Examples~\ref{ejm:m=1} and~\ref{ejm:sing-m=1} are generic singularities.
\end{ejm}

Let $Z$ be an intersection given by $\multiconj\subset(\R^m)^n$ with a generic
singularity. The singularity is given by $m$ points in $\R^m$ which have the origin as a proper convex combination and affinely generate a hyperplane (this is the case of Example~\ref{ejm:sing-m=1}). We may assume that they are $A_1,\dots, A_m$   and that they live in the hyperplane with last coordinate  $a_m =0$.  The other points are distributed in each of the two semispaces $a_m > 0$ ($p$ points) and $a_m < 0$ ($q$ points). The local structure is shown in the next result:

\begin{proposition}[{\cite[\S5.5.3]{LdM2023}}]
Let $Z\subset\R^n$ be an intersection of dimension $n-m-1$ defined by an $n$-tuple $\multiconj$ in $\R^m$
and determining a polytope~$P$.
A neighborhood of a generic singular vertex in $P$ is the cone over the product of two simplices
$\Delta^{p-1} \times \Delta^{q-1}$ ($n-m-1=p+q-1$).
The number of coordinate hyperplanes disjoint from the singular vertex is~$m$.
This singular vertex produces $2^m$ singularities homeomorphic to the the cone on a product of spheres $\mathbb{S}^{p-1}\times\mathbb{S}^{q-1}$.
\end{proposition}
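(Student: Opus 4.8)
The plan is to reduce the statement to the local model near the singular vertex and then invoke the description of smooth intersections in terms of products of simplices and spheres already available. First I would pass to a normal form for the tuple $\multiconj$ near the generic singular vertex $v=F_S$. By Definition~\ref{def:sing-isolated} we have $\#S=m$ and $d_S=1$, so $\multiconj_S=(A_1,\dots,A_m)$ affinely spans a hyperplane $H\subset\R^m$ and the origin lies in the interior of the convex hull of $\multiconj_S$ inside $H$. After a linear automorphism of $\R^m$ (which, as noted in the remarks, does not affect the topology of $Z$ nor the combinatorics of $P$) we may assume $H=\{a_m=0\}$. Since the singularity at $v$ is isolated, none of the faces of $P$ strictly containing $v$ is singular, which forces the remaining points $A_{m+1},\dots,A_n$ to be split into $p$ points with $a_m>0$ and $q$ points with $a_m<0$, and these two groups together with $\multiconj_S$ satisfy (WH) in the complementary directions; here $p+q-1 = n-m-1 = \dim Z$.

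Next I would identify the local structure of $P$ at $v$. A neighborhood of $v$ in $P$ is cut out by freezing the coordinates indexed by $S$ (those are the facets through $v$) and looking at the link. Concretely, the face $F_J$ through $v$ are indexed by $J\supseteq S$; the condition $C_J\neq\emptyset$ together with Lemma~\ref{lema:vec_poly0} translates into: the origin is a proper convex combination of $\multiconj_J$. Because the $m$-th coordinate of the points of $\multiconj_S$ vanishes while the extra points have $a_m>0$ or $a_m<0$, writing the origin as a convex combination of $\multiconj_J$ decouples into (i) balancing the $a_m$-coordinate, which requires using at least one index from the $p$-group and at least one from the $q$-group, and (ii) solving the residual affine equations inside $H$, which by the (WH) assumption on the two groups is exactly the situation of Example~\ref{ejm:m=1-liso} / Example~\ref{ejm:sing-m=1}. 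This shows the link of $v$ in $P$ is combinatorially $\partial(\Delta^{p-1}\times\Delta^{q-1})$, i.e. a neighborhood of $v$ in $P$ is the cone over $\Delta^{p-1}\times\Delta^{q-1}$, and the number of coordinate hyperplanes missing $v$ is precisely $\#[n]-\#S$... wait, rather it is $m$: the hyperplanes $x_i=0$ with $i\in S$ all pass through $v$ since $v\in F_S$, and one checks the coordinates indexed outside $S$ are positive at $v$, so exactly $n-\#S$... I should instead argue directly that $v$ has $m$ vanishing coordinates, equivalently $\#S=m$, which is the defining condition of a generic singularity.

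Finally I would globalize from $P$ to $Z$. The variety $Z$ is obtained from $Z_{\geq 0}\cong P$ by reflecting through the coordinate hyperplanes. A neighborhood of the vertex $v$ in $P$, being the cone over $\Delta^{p-1}\times\Delta^{q-1}$, sits inside $m$ coordinate hyperplanes (the ones indexed by $S$) and is transverse to the others; reflecting across those $m$ hyperplanes produces $2^m$ copies of $v$ in $Z$, and a neighborhood of each such point in $Z$ is obtained by doubling the cone-on-$(\Delta^{p-1}\times\Delta^{q-1})$ across its boundary faces exactly $m$ times. By the mirror-construction principle — the same one behind Example~\ref{ejm:m=1-liso}, where reflecting $\Delta_{p-1}\times\Delta_{q-1}$ across its coordinate walls yields $\mathbb{S}^{p-1}\times\mathbb{S}^{q-1}$ — this doubling turns the cone over $\Delta^{p-1}\times\Delta^{q-1}$ into the cone over $\mathbb{S}^{p-1}\times\mathbb{S}^{q-1}$. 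Hence each of the $2^m$ points is a singularity of $Z$ with neighborhood the cone on $\mathbb{S}^{p-1}\times\mathbb{S}^{q-1}$, as claimed.

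The main obstacle I expect is the bookkeeping in the reduction step: verifying carefully that the isolated-singularity hypothesis forces the "two semispaces plus a balanced hyperplane slice" normal form, and that along every face through $v$ the residual convex-combination problem genuinely satisfies (WH) so that the link is the full boundary of $\Delta^{p-1}\times\Delta^{q-1}$ rather than some degeneration of it. Once that local normal form is pinned down, both the cone-over-product-of-simplices statement for $P$ and the reflection argument giving the $2^m$ cones on products of spheres for $Z$ follow from the results already recalled (Lemma~\ref{lema:vec_poly0}, Proposition~\ref{prop:cono}, and Example~\ref{ejm:m=1-liso}).
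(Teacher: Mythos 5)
The paper itself gives no proof of this statement (it is quoted from \cite[\S5.5.3]{LdM2023}, with the normal form --- $\multiconj_S$ spanning the hyperplane $a_m=0$ and the remaining points split into $p$ points with $a_m>0$ and $q$ with $a_m<0$ --- set up in the paragraph just before), so your proposal has to stand on its own. Its first half essentially does: using isolatedness to force the normal form and then identifying the vertex figure of $v$ as $\Delta^{p-1}\times\Delta^{q-1}$ via the decoupled convex-combination analysis is the right argument. (The aside that the two groups ``satisfy (WH)'' is neither needed nor justified; what you actually use is that the origin is interior to the convex hull of $\multiconj_S$ inside $H$, so every $J\supseteq S$ meeting both groups yields a nonempty face of the expected dimension, while isolatedness excludes further points of $\multiconj$ lying on $H$.)

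The genuine gap is in the hyperplane/reflection bookkeeping, where you flag your own confusion and then resolve it the wrong way. With the paper's convention $C_J=\{r\in P\mid r_j>0 \text{ for } j\in J,\ r_j=0 \text{ for } j\notin J\}$, the vertex $v=F_S$ has its $m$ \emph{positive} coordinates indexed by $S$; hence $v$ lies in the $n-m$ coordinate hyperplanes indexed by $[n]\setminus S$ and misses exactly the $m$ hyperplanes indexed by $S$ --- that, and not ``$v$ has $m$ vanishing coordinates,'' is the second claim of the proposition, and as written your argument would instead show that $n-m$ hyperplanes are disjoint from $v$. The error propagates into the last step: reflections in hyperplanes containing $v$ fix $v$, so they cannot create the $2^m$ copies; these come from the reflections in the $m$ hyperplanes missing $v$, where the corresponding nonzero coordinate of $v$ may take either sign in $Z$. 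Conversely, the local structure at each of those $2^m$ points is obtained by unfolding the cone over $\Delta^{p-1}\times\Delta^{q-1}$ across the $n-m=p+q$ hyperplanes that do contain $v$, which is exactly the mirror construction of Example~\ref{ejm:m=1-liso} turning $\Delta^{p-1}\times\Delta^{q-1}$ into $\mathbb{S}^{p-1}\times\mathbb{S}^{q-1}$; ``doubling $\dots$ exactly $m$ times'' is not enough (already in the suspension case $m=1$ of Example~\ref{ejm:sing-m=1} a single doubling of the cone does not give the cone on $\mathbb{S}^{p-1}\times\mathbb{S}^{q-1}$). The conclusions you state are the correct ones, but the mechanism offered for them is inverted; once the roles of $S$ and its complement are straightened out, the argument goes through.
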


We end this section with several examples which illustrate the topology.

\begin{ejm}
For the tuple $\multiconj$ in Figure~\ref{fig:one_generic} we have as $P$ a square with four
vertices $F_{14}$ (depth~$1$), $F_{135}$, $F_{235}$, and $F_{245}$. It is a non-geometric embedding
of $[0,1]^2\subset\R^2$. The manifold~$Z$
is the union of two tori with four points in common.

\begin{figure}[ht]
\centering
\begin{tikzpicture}
\coordinate (A1) at (0:1);
\coordinate (A2) at (75:1);
\coordinate (A3) at (105:1);
\coordinate (A4) at (180:1);
\coordinate (A5) at (-90:1);

\foreach \x in {1, ..., 5}
{
\fill (A\x) circle [radius=.05];
}

\draw
(A1) node[right] {$A_1$} --
(A2) node[right] {$A_2$} --
(A3) node[left] {$A_3$} --
(A4) node[left] {$A_4$} --
(A5) node[right] {$A_5$} --
cycle;
\draw (A1) -- (A3) -- (A5);
\draw (A4) -- (A2) -- (A5);
\draw (A1) -- (A4);
\fill[red] (0,0) circle [radius=.05];

\end{tikzpicture}
 \caption{Only one generic singularity in $P$}
\label{fig:one_generic}
\end{figure}
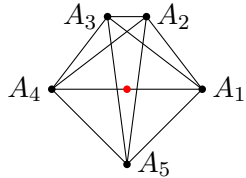

\end{ejm}

\begin{ejm}\label{ejm:k44}
Figure~\ref{fig:K44} illustrates an example with two generic singular vertices; the
intersection~$Z$ is homeomorphic to the complete bipartite graph $K_{4,4}$; each singular vertex
is associate to one partition.

\begin{figure}[ht]
\begin{subfigure}[b]{0.33\textwidth}
\centering
﻿\begin{tikzpicture}[scale=.9]
\cuadradob{0}{-5.5}{0,0}
\end{tikzpicture}
 \caption{$\mathcal{A}$}
\label{subfig:caso4c}
\end{subfigure}
\begin{subfigure}[b]{0.66\textwidth}
\centering
﻿\begin{tikzpicture}
\draw (2,0) node[below] {$r_2r_4$}-- (0,0) -- (0,2) node[above] {$r_1r_3$};

\draw (1.5,0) -- (0,1.5);
\fill[blue] (1.5,0) node[below left,black] {$F_{24}$} node[above right,black] {$\langle 1\rangle$} circle [radius=.1];
\fill[magenta] (0,1.5) node[ left,black] {$F_{13}$} node[right,black] {$\langle 1\rangle$} circle [radius=.1];
\begin{scope}[xshift=5cm, yshift=1cm]
\foreach \x [evaluate=\x as \i using int(\x+1)] in {0,...,7}
{
\coordinate (A\x) at (45*\x:1.25);
}

\foreach \x in {0,2,...,6}
\foreach \y in {1,3,...,7}
{
\draw (A\x) -- (A\y);
}
\foreach \x in {0,2,...,6}
{
\fill[blue] (A\x) circle [radius=.1];
}
\foreach \y in {1,3,...,7}
{
\fill[magenta] (A\y) circle [radius=.1];
}

\end{scope}
\end{tikzpicture}
 \caption{$P$ and $Z\cong K_{4,4}$.}
\label{subfig:K44}
\end{subfigure}
\caption{Two generic singularities}
\label{fig:K44}
\end{figure}
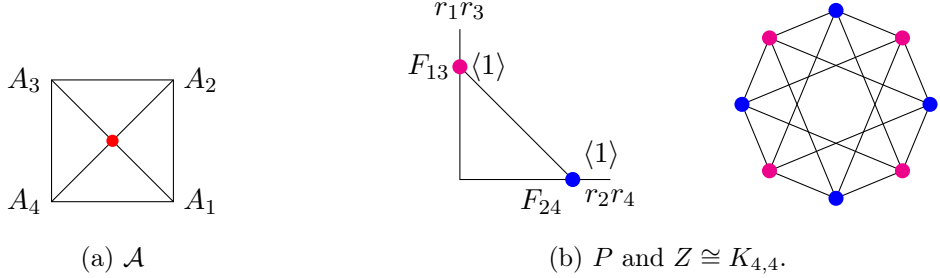

\end{ejm}

We can give a normalized description of all the tuples $\multiconj$ in $\R^2$ with only
generic singularities, compare with Example~\ref{ejm:m=2-liso}. We sketch the ideas of how to reach
the description stated in the next proposition. First, we can radially move
the points $A_i$ into the unit circle. 
In the unit circle we have the following allowed deformations.
If $A_{i_2}=-A_{i_1}$, we can deform both simultaneously without crossing
any other point; if $-A_{i_1}\notin\multiconj$ we can deform $A_{i_1}$ 
as $A_{i_1,t}$ ($A_{i_1,0}=A_{i_1}$) and keep fixed the other points, 
if $A_{{i_1},t}$ is never antipodal to any $A_j$, $j\neq i_1$.
Let $r$ be the number of generic singularities; there are
$r$ couples of antipodal points
and we can deform them such that they become equidistributed.
Since the singularities are generic these points cannot be repeated. 
Let $A_{i_1},A_{i_2}$ be two 
consecutive distinct points such that $-A_{i_1},-A_{i_2}\notin\multiconj$ and let $C(A_{i_1},A_{i_2})$ be the shorter arc
between them;
if there is no point of $\multiconj$ in the interior of the opposite of
$C(A_{i_1},A_{i_2})$ we can deform $A_{i_2}$ into $A_{i_1}$. 
After performing all the possible identifications, the next result
is straightforward though its proof is cumbersome.

\begin{proposition}
Let $\multiconj\subset\R^2$ with~$r$ generic singularities.
Then there exists an equisingular deformation such that
\[
\multiconj=(P_1,\dots,P_{2r})\cup\bigcup_{i=1}^{2r}
\underbrace{(Q_{i,1}^{n_{i,1}},\dots,Q_{i,m_i}^{n_{i,m_i}})}_{\multiconj_i}
\]
where
\begin{enumerate}[label=\rm(\alph{enumi})]
\item $(P_1,\dots,P_{2r})$ are equidistributed; in particular, $P_j+P_{r+j}=0$;
\item $\multiconj_i$ is an \emph{ordered} tuple between $P_i$ and $P_{i+1}$ ($P_{2r+1}:=P_1$);
\item $\abs{m_i-m_{i+r}}\leq 1$;
\item between two distinct consecutives points $Q',Q''\in\multiconj_i$, there exists exactly a unique point
$Q\in \multiconj_{i+r}$ such that $-Q\in C(Q',Q'')$; the
same statement holds interchanging $\multiconj_i$ and $\multiconj_{i+r}$.
\end{enumerate}
If $r=1$, then $m_1,m_2\geq 1$.
\end{proposition}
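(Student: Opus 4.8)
The plan is to start from an arbitrary tuple $\multiconj\subset\R^2$ with exactly $r$ generic singularities and to normalize it by the sequence of allowed equisingular deformations described just before the statement, producing the claimed normal form. First I would invoke the remark following Example~\ref{ejm:m=2-liso} to radially push every $A_i$ onto the unit circle, which is an equisingular deformation; from now on $\multiconj$ is a finite multiset of points on $\mathbb{S}^1$. Each generic singularity corresponds, by Definition~\ref{def:sing-isolated} with $m=2$, to a pair $\{S\}$ with $\#S=2$ whose two points have the origin as a proper convex combination, i.e. an antipodal pair $\{P,-P\}\subset\multiconj$; genericity (the singularity is isolated, $d_S=1$) forces these two points to be simple, so there are exactly $r$ such antipodal pairs and they involve $2r$ distinct points $P_1,\dots,P_{2r}$. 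I would then move these $r$ antipodal pairs simultaneously — the first allowed deformation — so that the $P_j$ become equidistributed on the circle, taking care that while doing so no $P_j$ collides with, or becomes antipodal to, any of the remaining (non-antipodal) points of $\multiconj$; since the remaining points can be dragged along or momentarily parked, this is always possible and yields~(a), with $P_j+P_{j+r}=0$.

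Next, having fixed the $P_j$'s, I would treat the remaining points of $\multiconj$, which now lie in the $2r$ open arcs determined by consecutive $P_i,P_{i+1}$; group them into the subtuples $\multiconj_i$, ordered along the arc, giving~(b). The content of~(c) and~(d) comes from applying the identification deformation ``if no point of $\multiconj$ lies in the interior of the arc opposite to $C(Q',Q'')$, merge $Q''$ into $Q'$'' exhaustively. I would argue: after exhausting all such merges, between any two consecutive \emph{distinct} points $Q',Q''$ of $\multiconj_i$ the opposite arc $C(Q',Q'')^{\mathrm{opp}}\subset$ the arc spanned by $\multiconj_{i+r}$ (together with its endpoints $P_{i+r},P_{i+r+1}$) must contain at least one point of $\multiconj$; this point is not a $P_j$ (those are on arc boundaries, not interiors) so it lies in $\multiconj_{i+r}$, giving the existence half of~(d). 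For uniqueness: if two points $Q_1,Q_2\in\multiconj_{i+r}$ had antipodes in $C(Q',Q'')$, then $-Q_1,-Q_2$ would both lie strictly between $Q'$ and $Q''$, hence there would be a point of $\multiconj_{i+r}$ (namely $Q_1$ or $Q_2$) whose opposite arc to its neighbor is empty, contradicting exhaustion of merges on side $i+r$ — so at most one such point, proving~(d); the symmetric statement swaps the roles of $i$ and $i+r$. Finally~(c): the map sending a consecutive-distinct-pair of $\multiconj_i$ to the unique antipodal witness in $\multiconj_{i+r}$ is, by~(d) applied both ways, a bijection between the ``gaps'' of $\multiconj_i$ and the points of $\multiconj_{i+r}$ that are interior (not equal to the extreme ones adjacent to $P_{i+r},P_{i+r+1}$), from which $|m_i-m_{i+r}|\le 1$ follows by counting. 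The last sentence, $r=1\Rightarrow m_1,m_2\ge1$, I would get from condition~\ref{N3} (the intersection meets every coordinate hyperplane): if some $\multiconj_i$ were empty with $r=1$, then $\multiconj=(P_1,P_2,\multiconj_2)$ with $P_2=-P_1$, and removing a suitable coordinate would show $Z$ splits off an $\mathbb{S}^0$ factor, contradicting~\ref{N3}; alternatively one checks directly that then the origin fails to be a convex combination of $\multiconj$ minus an appropriate point.

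The main obstacle I anticipate is not the logical skeleton but verifying that the normalizing deformations can be performed \emph{equisingularly and without ever creating a spurious antipodal pair} — in particular, that while equidistributing the $P_j$ one never forces some non-antipodal $Q$ to pass through an antipode of another point, which would transiently change the number of generic singularities. Handling this cleanly requires a small general position / continuity argument (the ``bad'' configurations form a codimension-$\ge1$ set, so a generic path avoids them), and bookkeeping the order of operations so that each merge in the proof of~(d) is legitimately available at the time it is used. This is exactly the ``cumbersome'' part the authors allude to, and I would confine it to a careful but routine lemma about moving finitely many points on a circle.
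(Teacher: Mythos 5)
Your proposal takes the same route as the paper, which itself only sketches the argument (push the points to the unit circle, equidistribute the $r$ antipodal pairs, then exhaust the merging deformations) and declares the remaining verification straightforward but cumbersome; your identification of the generic singularities with simple antipodal pairs, your derivation of the existence/uniqueness statement in (d) from the impossibility of further merges, and your treatment of the case $r=1$ via \ref{N3} are all sound and match the intended proof. Two small corrections are in order. First, your fallback justification that the equidistribution step can be made equisingular because the bad configurations ``form a codimension-$\ge 1$ set, so a generic path avoids them'' does not work: the walls where a point becomes antipodal to another have codimension exactly one and cannot be avoided by genericity if the endpoints of the path lie on opposite sides; the correct mechanism is the one you also hint at, namely dragging the remaining points along, e.g.\ by an isotopy of the circle which is equivariant under the antipodal map and carries the old pairs to equidistributed positions, since such an isotopy preserves all coincidence and antipodality relations and is therefore equisingular. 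Second, the map sending a gap of $\multiconj_i$ (a pair of consecutive distinct points) to its unique antipodal witness in $\multiconj_{i+r}$ is injective, but its image need not be the set of non-extreme points of $\multiconj_{i+r}$ (already for $m_i=m_{i+r}=2$ the witness is an extreme point, and your claimed bijection would force $m_{i+r}=m_i+1$); however, injectivity in both directions, which is exactly what (d) gives, yields $m_i-1\le m_{i+r}$ and $m_{i+r}-1\le m_i$, hence (c), so the conclusion stands with this minor repair.
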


\begin{ejm}
The connected
intersections of dimension~$2$ with generic singularities are given by $n$-polygons
embedded in $\R^{n+r}$, $r\leq n$, where $r$ vertices of the polygon are in the intersection
of three coordinate hyperplanes.
Compare with the smooth case in Example~\ref{ejm:m=2-liso-total} where
the polygon is geometrically embedded.
\end{ejm}
 \section{Deformations and smoothings}\label{sec:smoothings}

Every singular intersection can be deformed slightly and become transversal. In most cases different slight deformations give intersections with different topological and differentiable types. We now study all the possibilities that can arise like this, first by considering all the possible slight deformations and then looking for lower dimensional deformations that include all the possible smooth types as well as the less singular ones that separate them.

Let $\multiconj=(A_1,\dots,A_n)\in(\R^m)^n$ satisfying \ref{N1},~\ref{N2}; let~$Z$ be  the intersection of ellipsoids in $\R^n$ they define and $P$ the corresponding polytope. For $\varepsilon >0$ let $U$ be the product of the balls $\mathbb{B}(A_i;\varepsilon)\subset\R^m$. Each $\multiconj_\mathbf{t}\in U$ defines an intersection $Z_\mathbf{t}$ and the corresponding polytope $P_\mathbf{t}$.

The open set $U$ admits a stratification~$\mathcal{S}$ such that in any stratum the embedded  type of $P_\mathbf{t}$  and, therefore, also the topological type of $Z_\mathbf{t}$ remains constant. The open strata will consist of tuples that satisfy (WH) and the differentiable type of $Z_\mathbf{t}$ and the combinatorial type of $P_\mathbf{t}$ remain constant.
A similar argument happens for the other strata, in general only for the topological type.

\begin{dfn}\label{def:smoothing}
With the above notation, we say that $U$ is the \emph{domain of a deformation} if $\multiconj$ is in the closure of all the strata
and  $\forall\mathbf{t}\in U$ the conditions \ref{N1} and \ref{N2} hold.

Each open stratum is called a  $\emph{smoothing}$ of $\multiconj$ and we will use this same name for the unique type of all the tuples of the stratum and their associated intersections and polytopes, since this cannot cause any confusion. A stratum $S$ where $\mathbf{0}\notin S$ and the $Z_\mathbf{t}$ is still singular for $\mathbf{t}\in S$ is called a \emph{partial smoothing}.

In many cases we will restrict to a subset $V\subset U$ that contains $\multiconj$ and the conditions about the strata hold for the induced stratification.
$V$ is called a  \emph{top-versal deformation} if all the topological types of smoothings in $U$ appear in~$V$.
\end{dfn}

\begin{remark}
Note that even if $\multiconj$ satisfies \ref{N3} it may happen that $\multiconj_\mathbf{t} \in U$ do not, i.e.,
$Z$ is connected and some $Z_\mathbf{t}$ are disconnected.
\end{remark}

\begin{ejm}\label{ejm:smoothings-1}
Let us consider $\multiconj:=((-1)^p, (1)^q, 0)\subset\R$. We can take as $U$ the product of intervals of radius $\varepsilon<\frac{1}{2}$, $p$ around the points $-1$, one around the point $0$, and $q$ around the points $-1$. There are three strata depending of the value of the parameter $t_0$ in the interval around $0$:
\begin{enumerate}[label=\rm(\roman{enumi})]
\item If $t_0<0$ we get an open smoothing stratum with intersection $\mathbb{S}^{p}\times\mathbb{S}^{q-1}$ and polytope $\Delta_p\times\Delta_{q-1}$.

\item If $t_0>0$ we get an open smoothing stratum with intersection $\mathbb{S}^{p-1}\times\mathbb{S}^{q}$ and polytope $\Delta_{p-1}\times\Delta_q$.

\item If $t_0=0$ we get a closed singular stratum of codimension one with intersection the suspension of $\mathbb{S}^{p-1}\times\mathbb{S}^{q-1}$ and polytope the cone on $\Delta_p\times\Delta_{q-1}$.
\end{enumerate}

The deformation $V\subset U$ where the parameter $t_0$ takes all values within its interval and all the other parameters are $0$ contains all those topological types and is therefore a top-versal deformation.  This situation generalizes to the case with only generic singularities.

The two possible smoothings  for $p=q=1$ are in~Figure~\ref{fig:smoothing-simple}.

\begin{figure}[ht]
\begin{center}
\begin{tikzpicture}
\coordinate (O) at (0,0);
\coordinate (X) at (-135:{sqrt(2)/2});
\coordinate (Y) at (0:1);
\coordinate (Z) at (90:1.25);
\coordinate (T) at (.5,-.5);
\coordinate (P) at (0,.75);

\draw[->] (O) -- (X) node[left] {$r_1$};
\draw[->] (O) -- (Y) node[right] {$r_2$};
\draw[->] (O) -- (Z) node[above] {$r_3$};
\draw (P) -- (T);
\fill (T) circle [radius=.075];
\fill (P) circle [radius=.075];

\begin{scope}[xshift=5cm]
\coordinate (O) at (0,0);
\coordinate (X) at (-135:{sqrt(2)/2});
\coordinate (Y) at (0:1);
\coordinate (Z) at (90:1.25);
\coordinate (T) at (.5,-.5);
\coordinate (P) at (0.25,.75);

\draw[->] (O) -- (X) node[left] {$r_1$};
\draw[->] (O) -- (Y) node[right] {$r_2$};
\draw[->] (O) -- (Z) node[above] {$r_3$};
\draw (P) -- (T);
\fill (T) circle [radius=.075];
\fill (P) circle [radius=.075];
\end{scope}

\begin{scope}[xshift=-5cm]
\coordinate (O) at (0,0);
\coordinate (X) at (-135:{sqrt(2)/2});
\coordinate (Y) at (0:1);
\coordinate (Z) at (90:1.25);
\coordinate (T) at (.5,-.5);
\coordinate (P) at (-.25,.6);

\draw[->] (O) -- (X) node[left] {$r_1$};
\draw[->] (O) -- (Y) node[right] {$r_2$};
\draw[->] (O) -- (Z) node[above] {$r_3$};
\draw (P) -- (T);
\fill (T) circle [radius=.075];
\fill (P) circle [radius=.075];
\end{scope}
\end{tikzpicture}
 
﻿\begin{tikzpicture}[scale=2]
\begin{scope}[xshift=2.5cm, yshift=0.45cm, scale=.65]
\draw (-.1,1) -- (-3/2,0) -- (-.1, -1) -- (-1/2,0) --cycle
(.1,1) -- (3/2,0) -- (.1, -1) -- (1/2,0) --cycle;

\draw[line width=1.2, color=blue] ($.7*(-.1, 1)+.3*(-1/2, 0)$)
-- (-.1, 1) -- ($.7*(-.1, 1)+.3*(-3/2, 0)$)
($.7*(-.1, -1)+.3*(-1/2, 0)$)
-- (-.1, -1) -- ($.7*(-.1, -1)+.3*(-3/2, 0)$)
($.7*(.1, 1)+.3*(1/2, 0)$)
-- (.1, 1) -- ($.7*(.1, 1)+.3*(3/2, 0)$)
($.7*(.1, -1)+.3*(1/2, 0)$)
-- (.1, -1) -- ($.7*(.1, -1)+.3*(3/2, 0)$);
\end{scope}

\begin{scope}[xshift=5cm, yshift=0.45cm, scale=.65]

\draw (0,1) -- (1/2,0) -- (0, -1) -- (-1/2,0) --cycle
(0,1) -- (3/2,0) -- (0, -1) -- (-3/2,0) --cycle;

\draw[line width=1.2, color=blue] (0, 1) -- ($.7*(0, 1)+.3*(1/2, 0)$);
\draw[line width=1.2, color=blue] (0, 1) -- ($.7*(0, 1)+.3*(-1/2, 0)$);
\draw[line width=1.2, color=blue] (0, 1) -- ($.7*(0, 1)+.3*(3/2, 0)$);
\draw[line width=1.2, color=blue] (0, 1) -- ($.7*(0, 1)+.3*(-3/2, 0)$);

\draw[line width=1.2, color=blue] (0, -1) -- ($.7*(0, -1)+.3*(1/2, 0)$);
\draw[line width=1.2, color=blue] (0, -1) -- ($.7*(0, -1)+.3*(-1/2, 0)$);
\draw[line width=1.2, color=blue] (0, -1) -- ($.7*(0, -1)+.3*(3/2, 0)$);
\draw[line width=1.2, color=blue] (0, -1) -- ($.7*(0, -1)+.3*(-3/2, 0)$);
\end{scope}

\begin{scope}[xshift=7.5cm, yshift=0.45cm, scale=.65]

\draw (0,1.1) -- (-3/2,0) -- (0, -1.1) -- (3/2,0) --cycle
(0,.9) -- (1/2,0) -- (0, -.9) -- (-1/2,0) --cycle;

\draw[line width=1.2, color=blue] ($.7*(0,.9)+.3*(-1/2, 0)$)
-- (0,.9) -- ($.7*(0,.9)+.3*(1/2, 0)$)
($.7*(0,-.9)+.3*(-1/2, 0)$)
-- (0,-.9) -- ($.7*(0,-.9)+.3*(1/2, 0)$)
($.7*(0, 1.1)+.3*(-3/2, 0)$)
-- (0, 1.1) -- ($.7*(0, 1.1)+.3*(3/2, 0)$)
($.7*(0, -1.1)+.3*(-3/2, 0)$)
-- (0, -1.1) -- ($.7*(0, -1.1)+.3*(3/2, 0)$);

\end{scope}

\end{tikzpicture}
 
\includegraphics[height=3cm]{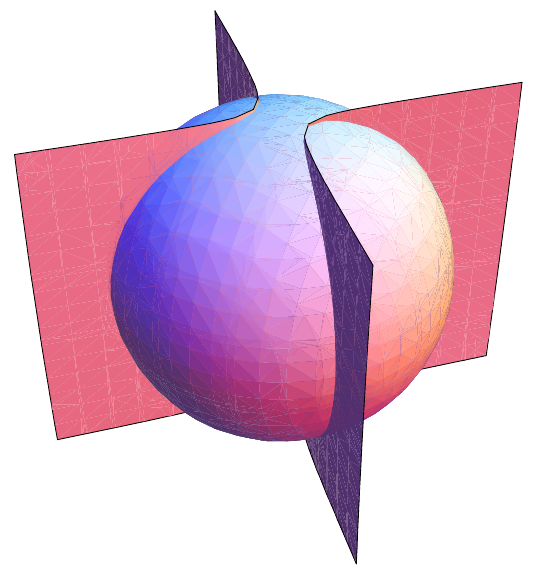}
\hspace{2cm}
\includegraphics[height=3cm]{ZdePFig1.pdf}
\hspace{2cm}
\includegraphics[height=3cm]{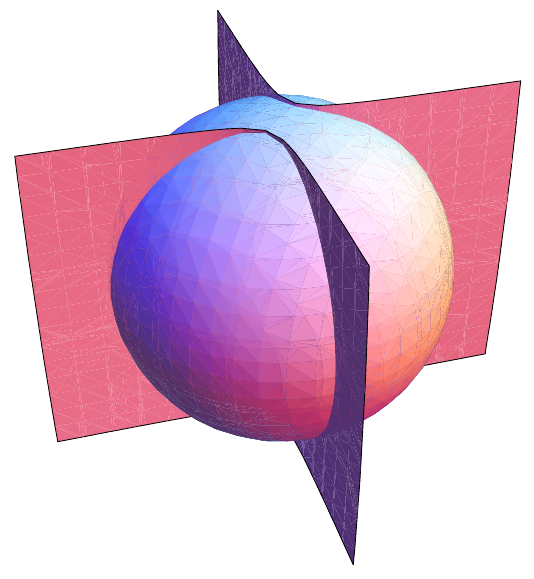}
\caption{Smoothings for $p=q=1$. In the first row, we have the polytopes. In the second row, an abstract representation of the smoothings, emphasizing what replace the singularities.
In the third row, the embedded intersections. The middle column is for the singular case, and the other columns for the smoothings.}
\label{fig:smoothing-simple}
\end{center}
\end{figure}
\end{ejm}

\begin{ejm}
Example~\ref{ejm:k44} has  generic singularities in dimension~$1$. A top-versal deformation has parameters in $\R^2$ with strata the origin, the four open semiaxes (whose intersections are two copies of the particular case of Example~\ref{ejm:smoothings-1} ($p=q=1$) and the four open quadrants (whose intersections are four copies of $\mathbb{S}^1$, i.e. $\mathbb{S}^1\times\mathbb{S}^0\times\mathbb{S}^0$).

Observe that here, as in the case $p=q=1$ of the Example~\ref{ejm:smoothings-1}, all the strata of positive dimension give disconnected intersections and only one topological type due to the symmetries of the examples. We pass from one smoothing to the other by surgeries.
\end{ejm}

\begin{ejm}
In dimension~$2$, the polytope associate to the simplest intersection with generic singularities is given by a triangle $\Delta_1$ with one vertex in the intersection
of three coordinate hyperplanes. Figure~\ref{fig:smoothing2-simple} shows the singular intersection and the two possible smoothings:
$\mathbb{S}^1\times\mathbb{S}^1$ and $\mathbb{S}^0\times\mathbb{S}^2$.
\begin{figure}[ht]
\begin{center}
﻿\begin{tikzpicture}[scale=1.2]
\begin{scope}[xshift=.25cm,yshift=.25cm]
\draw (.75,0) ellipse [x radius=.25cm,y radius=1cm];
\draw (.5,0) arc [start angle=180,end angle=360,x radius=.25,y radius=.125];
\draw[dashed] (.5,0) arc [start angle=180,end angle=0,x radius=.25,y radius=.125];
\draw[shift={(.5,0)}] (-.75,0) ellipse [x radius=.25cm,y radius=1cm];
\draw[shift={(.5,0)}] (-1,0) arc [start angle=180,end angle=360,x radius=.25,y radius=.125];
\draw[dashed, shift={(.5,0)}] (-1,0) arc [start angle=180,end angle=0,x radius=.25,y radius=.125];
\end{scope}

\begin{scope}[xshift=3.25cm,yshift=.25cm]
\draw[draw=black]
(0,-1) to[out=-30,in=-90] (1,0)
to[out=90,in=30] (0,1)
to[out=-30, in=90] (.5,0)
to[out=-90,in=30] (0,-1);
\draw (.5,0) arc [start angle=180,end angle=360,x radius=.25,y radius=.125];
\draw[dashed] (.5,0) arc [start angle=180,end angle=0,x radius=.25,y radius=.125];
\draw[draw=black]
(0,-1) to[out=-150,in=-90] (-1,0)
to[out=90,in=150] (0,1)
to[out=-150, in=90] (-.5,0)
to[out=-90,in=150] (0,-1);
\draw (-1,0) arc [start angle=180,end angle=360,x radius=.25,y radius=.125];
\draw[dashed] (-1,0) arc [start angle=180,end angle=0,x radius=.25,y radius=.125];
\end{scope}

\begin{scope}[xshift=6.25cm,yshift=.25cm]
\draw (0,0) circle [radius=.5];
\draw (0,0) circle [radius=1];
\draw (.5,0) arc [start angle=180,end angle=360,x radius=.25,y radius=.125];
\draw[dashed] (.5,0) arc [start angle=180,end angle=0,x radius=.25,y radius=.125];
\draw (-1,0) arc [start angle=180,end angle=360,x radius=.25,y radius=.125];
\draw[dashed] (-1,0) arc [start angle=180,end angle=0,x radius=.25,y radius=.125];
\end{scope}
\end{tikzpicture}
 \caption{}
\label{fig:smoothing2-simple}
\end{center}
\end{figure}
\end{ejm}

\begin{ejm}
Let us consider the intersection associated with a triangle~$\Delta_2$ with two generic singular vertices, see~Figure~\ref{fig:pentagonoZ}. The singular variety
$Z(\Delta_2)$ is in~\subref{subfig:pentagonoZf}; its smoothings are $\mathbb{S}^0\times\mathbb{S}^0\times\mathbb{S}^2$~\subref{subfig:pentagonoZa},
$\mathbb{S}^0\times\mathbb{S}^1\times\mathbb{S}^1$~\subref{subfig:pentagonoZb}, and $\mathcal{F}_5$~\subref{subfig:pentagonoZc}.

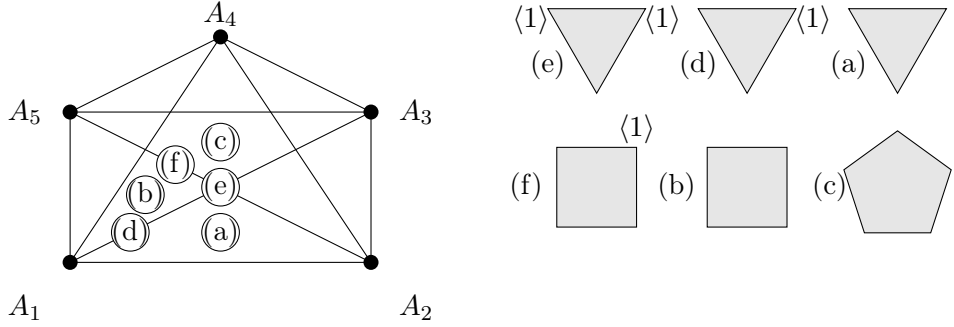
\begin{figure}
\centering
\begin{tikzpicture}
\coordinate (A1) at (-2,-2);
\coordinate (A2) at (2,-2);
\coordinate (A3) at (2,0);
\coordinate (A4) at (0,1);
\coordinate (A5) at (-2,0);
\coordinate (O1) at ($.25*(A1)+.25*(A2)+.25*(A3)+.25*(A5)$);
\coordinate (O2) at ($.65*(A5)+.35*(A2)$);
\coordinate (O3) at ($.8*(A1)+.2*(A3)$);
\coordinate (O4) at ($.65*(A1)+.15*(A3)+.2*(A4)$);
\coordinate (O5) at ($.45*(A1)+.45*(A2)+.2*(A4)$);
\coordinate (O6) at ($.2*(A1)+.2*(A2)+.2*(A3)+.4*(A4)+.2*(A5)$);

\foreach \x [evaluate=\x as \z using \x+1] in {1,...,5}
{
\fill (A\x) circle [radius=.1];
\node at ($1.3*(A\x)$) {$A_\x$};
}
\foreach \x [evaluate=\x as \z using \x+1] in {1,...,4}
\foreach \y in {\z,...,5}
{
\draw (A\x) -- (A\y);
}
\foreach \x in {1,...,6}
{
}

\filldraw[fill=white] (O1) node {\subref{subfig:pentagonoZf}} circle [radius=.25];

\filldraw[fill=white] (O2) node {\subref{subfig:pentagonoZe}} circle [radius=.25];

\filldraw[fill=white] (O3) node {\subref{subfig:pentagonoZd}} circle [radius=.25];

\filldraw[fill=white] (O4) node {\subref{subfig:pentagonoZb}} circle [radius=.25];

\filldraw[fill=white] (O5) node {\subref{subfig:pentagonoZa}} circle [radius=.25];

\filldraw[fill=white] (O6) node {\subref{subfig:pentagonoZc}} circle [radius=.25];

\begin{scope}[shift={(5,1)}, scale=.75]
\filldraw[fill=gray!20!white] (30:1) -- (150:1) -- (-90:1) -- cycle;
\node at (210:1) {\subref{subfig:pentagonoZf}};
\node at ($(30:1)+(0.3,-.2)$) {$\langle 1\rangle$};
\node at ($(150:1)-(0.3,.2)$) {$\langle 1\rangle$};

\end{scope}

\begin{scope}[shift={(5,-1)}, scale=.75]
\filldraw[fill=gray!20!white] (45:1) -- (135:1) -- (-135:1) -- (-45:1) -- cycle;
\node at (-1.25,0) {\subref{subfig:pentagonoZe}};

\node at ($(45:1)+(0,.325)$) {$\langle 1\rangle$};
\end{scope}

\begin{scope}[shift={(7,1)}, scale=.75]
\filldraw[fill=gray!20!white] (30:1) -- (150:1) -- (-90:1) -- cycle;
\node at (210:1) {\subref{subfig:pentagonoZd}};

\node at ($(30:1)+(0.3,-.2)$) {$\langle 1\rangle$};
\end{scope}

\begin{scope}[shift={(7,-1)}, scale=.75]
\filldraw[fill=gray!20!white] (45:1) -- (135:1) -- (-135:1) -- (-45:1) -- cycle;
\node at (-1.25,0) {\subref{subfig:pentagonoZb}};
\end{scope}

\begin{scope}[shift={(9,1)}, scale=.75]
\filldraw[fill=gray!20!white] (30:1) -- (150:1) -- (-90:1) -- cycle;
\node at (210:1) {\subref{subfig:pentagonoZa}};
\end{scope}

\begin{scope}[shift={(9,-1)}, scale=.75]
\filldraw[fill=gray!20!white] (18:1) -- (90:1) -- (162:1) -- (234:1) -- (306:1) -- cycle;
\node at (-1.2,0) {\subref{subfig:pentagonoZc}};
\end{scope}

\end{tikzpicture}
\caption{The left-hand side represents different $\multiconj$'s depending on whether the the origin in $\R^2$ is. The possibilities are the centers of the circles. The right-hand side represents
the polytopes for the intersections in Figure~\ref{fig:pentagonoZ}.}
\end{figure}

There are two partial smoothings, the square~$C_1$ with one generic singular vertex and a triangle $\tilde\Delta_2$ disjoint with one
coordinate hyperplane. The singular variety
$Z(C_1)$ is in~\subref{subfig:pentagonoZe}; its smoothings are in~\subref{subfig:pentagonoZb} and~\subref{subfig:pentagonoZc}.
Finally, $Z(\tilde{\Delta}_2)\cong\mathbb{S}^0\times Z(\Delta_1)$ is in~\subref{subfig:pentagonoZd}  with smoothings in~\subref{subfig:pentagonoZa} and~\subref{subfig:pentagonoZc}.

\begin{figure}[ht]
\centering
\begin{tikzpicture}[overlay, shift={(6.8,-.3)}]
\draw[->] (0,0) -- (0,1.25);
\fill[white] (0,.6) circle[radius=.2cm];
\draw[->] (0,0) -- (-4.5,1);
\draw[->] (0,0) -- (4.25,1);
\draw[->] (-1,-1.2) -- (-3,-1.2);
\draw[->] (1.2,-1.2) -- (3.5,-1.2);
\draw[->] (-5,0) -- (-5,1);
\draw[->] (-5,0) -- (-1,1.5);
\draw[->] (3.75,-.3) -- (3.75,1.25);
\draw[->] (3.75,-.3) -- (1,1.5);
\end{tikzpicture}
\begin{subfigure}[b]{0.3\textwidth}
\centering
﻿\begin{tikzpicture}[scale=.5]
\esfera
\begin{scope}[xshift=2.5cm,xscale=-1]
\esfera
\end{scope}
\begin{scope}[xshift=2.5cm,xscale=-1]
\esfera
\end{scope}
\begin{scope}[yshift=-2.5cm,yscale=-1]
\esfera
\end{scope}
\begin{scope}[yshift=-2.5cm, xshift=2.5cm,scale=-1]
\esfera
\end{scope}
\end{tikzpicture}
 \caption{}
\label{subfig:pentagonoZa}
\end{subfigure}
\begin{subfigure}[b]{0.3\textwidth}
\centering
﻿\begin{tikzpicture}[scale=1]
\toro
\begin{scope}[xshift=.25cm,xscale=-1]
\toro
\end{scope}
\end{tikzpicture}
 \caption{}
\label{subfig:pentagonoZb}
\end{subfigure}
\begin{subfigure}[b]{0.3\textwidth}
\centering
﻿\begin{tikzpicture}[scale=.5]

\filldraw[fill=cyan!50!white] (0,0) circle [radius=2cm];
\filldraw[fill=white] (0,0) circle [radius=.375cm];
\filldraw[fill=white] (1.125,0) circle [radius=.375cm];
\filldraw[fill=white] (-1.125,0) circle [radius=.375cm];
\filldraw[fill=white] (0,1.125) circle [radius=.375cm];
\filldraw[fill=white] (0,-1.125) circle [radius=.375cm];
\draw (-2,0) arc [start angle=180,end angle=360,x radius=.25cm,y radius=.125cm];
\draw[dotted] (-2,0) arc [start angle=180,end angle=0,x radius=.25cm,y radius=.125cm];
\draw (-.75,0) arc [start angle=180,end angle=360,x radius=.1875cm,y radius=.1cm];
\draw[dotted] (-.75,0) arc [start angle=180,end angle=0,x radius=.1875cm,y radius=.1cm];
\draw (2,0) arc [start angle=0,end angle=-180,x radius=.25cm,y radius=.125cm];
\draw[dotted] (2,0) arc [start angle=0,end angle=180,x radius=.25cm,y radius=.125cm];
\draw (.75,0) arc [start angle=0,end angle=-180,x radius=.1875cm,y radius=.1cm];
\draw[dotted] (.75,0) arc [start angle=0,end angle=180,x radius=.1875cm,y radius=.1cm];
\draw (0,-2) arc [start angle=270,end angle=90,x radius=.125cm,y radius=.25cm];
\draw[dotted] (0,-2) arc [start angle=-90,end angle=90,x radius=.125cm,y radius=.25cm];
\draw (0,2) arc [start angle=90,end angle=270,x radius=.125cm,y radius=.25cm];
\draw[dotted] (0,2) arc [start angle=90,end angle=-90,x radius=.125cm,y radius=.25cm];
\draw (0,-.75) arc [start angle=270,end angle=90,x radius=.1cm,y radius=.1875cm];
\draw[dotted] (0,-.75) arc [start angle=-90,end angle=90,x radius=.1cm,y radius=.1875cm];
\draw (0,.75) arc [start angle=90,end angle=270,x radius=.1cm,y radius=.1875cm];
\draw[dotted] (0,.75) arc [start angle=90,end angle=-90,x radius=.1cm,y radius=.1875cm];

\end{tikzpicture} \caption{}
\label{subfig:pentagonoZc}
\end{subfigure}
\newline
\begin{subfigure}[b]{0.3\textwidth}
\centering
﻿\begin{tikzpicture}[scale=1]
\esfd
\begin{scope}[xshift=1.25cm,xscale=-1]
\esfd
\end{scope}
\begin{scope}[yshift=0cm,yscale=-1]
\esfd
\end{scope}
\begin{scope}[xshift=1.25cm,scale=-1]
\esfd
\end{scope}
\end{tikzpicture}
 \caption{}
\label{subfig:pentagonoZd}
\end{subfigure}
\begin{subfigure}[b]{0.3\textwidth}
\centering
﻿\begin{tikzpicture}[scale=1]
\esff
\begin{scope}[xshift=1cm,xscale=-1]
\esff
\end{scope}
\begin{scope}[yshift=0cm,yscale=-1]
\esff
\end{scope}
\begin{scope}[xshift=1cm,scale=-1]
\esff
\end{scope}
\end{tikzpicture}
 \caption{}
\label{subfig:pentagonoZf}
\end{subfigure}
\begin{subfigure}[b]{0.3\textwidth}
\centering
﻿\begin{tikzpicture}[scale=1]
\toro
\begin{scope}[xscale=-1]
\toro
\end{scope}
\end{tikzpicture}
 \caption{}
\label{subfig:pentagonoZe}
\end{subfigure}
\caption{}
\label{fig:pentagonoZ}
\end{figure}

\end{ejm}

\begin{remark}
Let $Z$ be an intersection with only generic singularities. We can define a top-versal deformation of $Z$ from the point of view of polytopes. The effect of this top-versal deformation
is to~\emph{simplify} the singular vertices. In dimension~$1$, a singular vertex is in the intersection of two coordinate hyperplanes, and the smoothings consist in moving the vertex to one of those hyperplanes. A similar discussion can be done in dimension~$2$. In dimension~$3$ there are two types of singular vertices. A vertex whose link is a quadrangle admits two smoothings
as in Figure~\ref{4pyra}. If it is a simple vertex in four coordinate hyperplanes, either the vertex moves to the intersection of three such hyperplanes (the polytope does not change and the intersection becomes multiplied by $\mathbb{S}^0$) or the polytope becomes truncated at this vertex.
\end{remark}

The topological types in adjacent open strata (separated by a codimension~$1$ stratum wall) are related by a move,
called \emph{flip} in \cite{B-M} where they applied it to describe the wall-crossing result in the context of moment-angle manifolds.

\begin{dfn}(\cite{B-M}) Let $P$ and $Q$ be two simple polytopes of the same dimension~$q$. Let $W$ be a simple polytope of dimension $q+1$. We say that $W$ is an \emph{elementary cobordism} between
$P$ and $Q$ if $P$ and $Q$ are disjoint facets of $W$ and $W\setminus (P \sqcup Q)$ contains a unique vertex $v$. Then, $Q$ is obtained from $P$ by a \emph{flip} of type $(a,b)$, where $a$ is the number of edges from $v$ to vertices in $P$ and $b$ is the number of edges from $v$ to vertices in $Q$.
\end{dfn}

 %
 \section{\texorpdfstring{$3$}{3}-dimensional smoothings}
\label{sec:3smoothings}


In dimensions 1 and 2 it is possible to describe
all the types of intersections and their smoothings. In higher dimensions
this is not possible because there are too many polytopes.

In dimension 3 we will study the smoothings of geometrically embedded
polyhedra,
namely, some non-simple polytopes~$P\subset\R^3$ with vertices lying
at no more than $4$~faces. 

Let $\ell$ be
the number of vertices lying in four faces, called $4$-vertices in the sequel. There is a top-versal deformation
with parameter space homeomorphic to a neighbourhood of the origin in~$\R^\ell$, where the 
$2^\ell$ orthants correspond to the smoothings. 

For each orthant
there are $\ell$ possible $(2,2)$-flips. Note that the automorphism
group of the polytope acts on these sets and the number of
differentiable types of smoothings may be less that $2^\ell$.

We are going to associate to any such polytope a decorated graph.

\begin{dfn}\label{rem:grafo-cirugia}
Let $P$ a $3$-dimensional polytope with vertices lying
at no more than $4$~faces and geometrically embedded. The \emph{decorated graph} $\Gamma(P)$ of $P$
is defined as follows:
\begin{itemize}
\item The vertices are the orbits of the set of orthants under
the action of $(\mathbb{Z}/2)^\ell$.

\item The edges are the orbits 
of the $(2,2)$-flips associated with each non-simple vertex. 
\item The non-loop edges are arbitrarily oriented.

\item Each vertex is decorated with $[\alpha]$ where $\alpha$ is the size of the orbit.

\item Each non-loop edge is decorated with $(\beta_1,-\beta_2)$,
where $\beta_1$ is the number of the connecting flips
in the direction of the edge and $\beta_2$  is the number of the connecting flips
in the opposite direction of the edge.

\item The loop edges are decorated with $(\beta)$ where
$\beta$ is the number of flips which do not change the orbit.
\end{itemize}
\end{dfn}

Examples of these decorated graphs can be found in Figures~\ref{grafo:piramide}
and~\ref{fig:grafo-bp3}.

\subsection{Smoothings of the square pyramid}
\mbox{}

Consider the square pyramid $P_{p4}$, the polytope associated to $\multiconj =((-1)^2,0,(1)^2)$ having a unique 4-vertex $V$.
Recall that $Z(P_{p4})$
has two singular points, the two apices of the suspension over the torus $\mathbb{S}^1\times \mathbb{S}^1$,
see Proposition~\ref{prop:cono}.
The neighbourhood of each of these singular points is a cone on the torus.  

The smoothings correspond to $\{\multiconj_t\mid t>0\}$ and $\{\multiconj_t\mid t<0\}$, where $\multiconj_t:=((-1)^2,t,(1)^2)$,
see Figure~\ref{4pyra} for the effect on the polytopes.
Observe that a new edge $E$ appears to replace the apex $V$.

\begin{figure}[ht]
\begin{center}
﻿\begin{tikzpicture}[scale=.7]
\cruz{0,0}{\node[left=3pt] at (0,0) {$V$};\node[left] at (-1,0) {$P_{p4}$};}
\draw[->] (0,1.2) -- (1.8,1.5);
\cruz{3,1.5}{\draw[line width=1.2,red] (0,.5) -- (0, -.5) ;}
\draw[->] (0,-1.2) -- (1.8,-1.5);
\cruz{3,-1.5}{\draw[line width=1.2,red] (.5,0) -- (-.5,0) ;}

\draw[->] (4.2,1.5) -- (4.8,1.5);

\begin{scope}[shift={(6,1.5)}]
\draw (-1,-1) rectangle (1,1);
\draw (-1,-1) -- (0,-.5) -- (1,-1);
\draw (-1,1) -- (0,.5) -- (1,1);
\draw[red, line width=1.2] (0,-.5) -- node[left,black] {$E$} (0,.5) ;
\end{scope}

\draw[->] (4.2,-1.5) -- (4.8,-1.5);

\begin{scope}[shift={(6,-1.5)}]
\draw (-1,-1) rectangle (1,1);
\draw (-1,-1) -- (-.5,0) -- (-1,1);
\draw (1,-1) -- (.5,0) -- (1,1);
\draw[red, line width=1.2] (-.5,0) --  node[above,black] {$E$} (.5,0) ;
\end{scope}
\end{tikzpicture}
 \caption{Smoothing at the 4-vertex V}
\label{4pyra}
\end{center}
\end{figure}
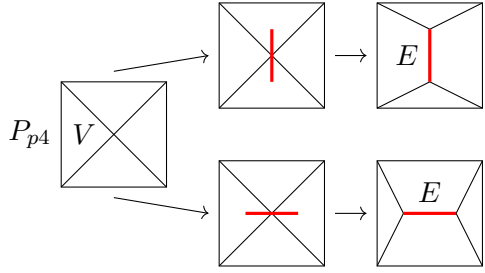

The group of symmetries of $P_{p4}$ is the dihedral group of order~$8$ and acts transitively on the two smoothings. 
Hence the corresponding polyhedron $P'_{p4}$ is a triangular prism, see Figure~\ref{4pyra},
with 5 faces and 6 vertices.
The new edge $E$ is a lateral edge of the prism. The manifold $Z(P'_{p4})$ obtained by reflection on the faces of $P'_{p4}$, is diffeomorphic to $\mathbb{S}^2\times\mathbb{S}^1$ which is the double of a solid torus.

The  edge $E$ is contained in two faces, hence under the reflections there are $2^3=8$ copies of $E$ in $Z(P'_{p4})$; it is disjoint to exactly one face, and as a consequence
they form two  circles, $\mathbb{S}^1_1=
\{p_1\}\times\mathbb{S}^1$ and $\mathbb{S}^1_2=
\{p_2\}\times\mathbb{S}^1$, in such a way that  $Z(P')$ is the union by the boundary of the two solid tori which are neighbourhood of these two circles; the solid tori are the product $\mathbb{D}_i\times \mathbb{S}^1$, where $\mathbb{D}_i$ is a closed disk centered at $p_i$; both disks
cover $\mathbb{S}^2$ and have common boundary.

The equations of the polytope defined by $\multiconj_t$ in $\R^5$ are
$0\leq r_i \leq 1$:
\begin{equation}\label{eq:pyramid}
\left\{
\begin{matrix}
r_1 &+& r_2 &+& r_3 &+& r_4 &+& r_5 & = & 1\\
r_1 &-& r_2 &+& r_3 &-& r_4 &+& tr_5 & = & 0.
\end{matrix}
\right.
\end{equation}
Let us consider the simple polytope $\Delta^2 \times \Delta^2$ of dimension $4$ in $\R^6$
defined by the equations
\[
\left\{
\begin{matrix}
r_1 &+& r_3 &+& \frac{1}{2}r_5 & = & \frac{1}{2}\\
r_2 &+& r_4 &+& \frac{1}{2}r_6 & = & \frac{1}{2},
\end{matrix}
\right.
\Leftrightarrow
\left\{
\begin{aligned}
r_1 + r_2 + r_3 + r_4 + \frac{1}{2}(r_5 + r_6) =&  1\\
r_1 - r_2 + r_3 - r_4 + \frac{1}{2}(r_5 - r_6) =& 0,
\end{aligned}
\right.
\qquad 0\leq r_i \leq 1.
\]
Let us cut it by a hyperplane $(1-t) r_5-(1+t)r_6=0$, $\abs{t}$ small enough. With the coordinates $r_1,\dots,r_4,r_5'$ in this hyperplane, where $r_5':=\frac{r_5+r_6}{2}$,
the equations of the cut become those of \eqref{eq:pyramid}.

\begin{remark}
The relation between the $2$ triangular prisms obtained by smoothing is a flip of type~$(2,2)$.
In a more topological language we pass from one copy to the other by a $0$-Dehn surgery on two fibers of the trivial fibration
$\mathbb{S}^2\times\mathbb{S}^1$.
\end{remark}

The following result is easy.

\begin{proposition} Let $\check{Z}(P_{p4})$ be the space obtained from $P_{p4}$ by reflections minus the points coming
from the non-simple vertex~$V$, which coincides with the space obtained from $P'_{p4}$ by reflections minus the circles coming from the edge~$E$. This space is homeomorphic to $\mathbb{S}^1\times\mathbb{S}^1\times(-1, 1)$.
In particular,
\begin{enumerate}[label=\rm(\alph{enumi})]
\item $\pi_1(Z(P_{p4}))=1$,
\item $\pi_1(Z(P'_{p4}))=\mathbb{Z}$,
\item $\pi_1(\check{Z}(P_{p4}))=\mathbb{Z}^2$.
\end{enumerate}
\end{proposition}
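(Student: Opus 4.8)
The plan is to identify $\check{Z}(P_{p4})$ explicitly as a mapping-torus-free product and then read off the three fundamental groups. First I would describe $\check{Z}(P_{p4})$ from the pyramid side. The polytope $P_{p4}$ is the cone over the square $\square$ with apex $V$, and by Proposition~\ref{prop:cono} the reflected space $Z(P_{p4})$ is the suspension of $Z(\square)=\mathbb{S}^1\times\mathbb{S}^1$, with the two suspension points being the copies of $V$. Removing these two points leaves the open suspension, i.e. $(\mathbb{S}^1\times\mathbb{S}^1)\times(-1,1)$. This already gives item (c), since $\pi_1(\mathbb{S}^1\times\mathbb{S}^1\times(-1,1))=\mathbb{Z}^2$, and it gives item (a) once one observes that adding back the two cone points (each a cone on a connected space) kills nothing new but does not create $\pi_1$ either: a van Kampen argument with the two cone neighbourhoods, which are contractible, against the open middle piece shows $\pi_1(Z(P_{p4}))$ is the quotient of $\mathbb{Z}^2$ by the two loops that bound in the cones, namely both generators, so it is trivial.

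Second I would verify the identification from the prism side, which is the substance of the \emph{which coincides with} clause. Here $P'_{p4}$ is the triangular prism, $Z(P'_{p4})\cong\mathbb{S}^2\times\mathbb{S}^1$ is the double of a solid torus along its boundary torus, and the edge $E$ reflects to the two core circles $\mathbb{S}^1_i=\{p_i\}\times\mathbb{S}^1$ of those solid tori, as already explained in the text. Deleting open tubular neighbourhoods of these two circles from $\mathbb{S}^2\times\mathbb{S}^1$ leaves $(\mathbb{S}^2\setminus(\mathbb{D}_1\sqcup\mathbb{D}_2))\times\mathbb{S}^1=(\mathbb{S}^1\times[0,1])\times\mathbb{S}^1$, an open annulus times a circle, which is again homeomorphic to $\mathbb{S}^1\times\mathbb{S}^1\times(-1,1)$. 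The key point is that the open manifold obtained by deleting the bad locus is the same whether one starts from the singular intersection and removes points or from a smoothing and removes the circles $E$ traces out; this is exactly the general principle behind the decorated-graph picture, namely that a flip replaces the star of the non-simple vertex without touching the complement. Once both descriptions are matched, item (b) follows: $\mathbb{S}^2\times\mathbb{S}^1$ has $\pi_1=\mathbb{Z}$, or equivalently one does van Kampen gluing the two solid-torus neighbourhoods of $\mathbb{S}^1_i$ (each with $\pi_1=\mathbb{Z}$ generated by the $\mathbb{S}^1$ factor) onto $\mathbb{S}^1\times\mathbb{S}^1\times(-1,1)$, which identifies one of the two $\mathbb{Z}^2$-generators with each core and kills the meridians, leaving $\mathbb{Z}$.

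The main obstacle I expect is bookkeeping the reflection combinatorics precisely enough to be sure that the $2^3$ copies of $E$ really assemble into exactly two circles with the claimed product structure, and that the complement of their neighbourhoods in $Z(P'_{p4})$ is the annulus-times-circle rather than something with extra components or twisting. Concretely one must check that $E$ lies in two of the five facets of the prism and misses exactly one facet (so that its reflection orbit has $2^{5-2-1}$... in fact $2^{3}$ elements as stated, organised by the one missed facet into $2^2=4$ segments on each of two levels, closing up into $2$ circles), and that the disks $\mathbb{D}_i$ around $p_i$ glue along their common boundary to reconstitute $\mathbb{S}^2$. This is routine but is the place where an off-by-one in the facet count would break the argument; everything else is a direct van Kampen computation on a space that has already been identified as $\mathbb{S}^1\times\mathbb{S}^1\times(-1,1)$.
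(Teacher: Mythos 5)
Your argument is correct and follows essentially the same route the paper intends: the proposition is stated there as ``easy'' precisely because the preceding paragraphs already identify $Z(P_{p4})$ as the suspension of the torus and $Z(P'_{p4})\cong\mathbb{S}^2\times\mathbb{S}^1$ as the double of a solid torus with the reflected copies of $E$ forming the two core fibers $\{p_i\}\times\mathbb{S}^1$, so that removing the two cone points, respectively the two circles, gives $\mathbb{S}^1\times\mathbb{S}^1\times(-1,1)$ and the three fundamental groups follow by van Kampen exactly as you compute. The only blemish is the momentary orbit miscount $2^{5-2-1}$, which you yourself correct to $2^{5-2}=2^3$, consistent with the paper's bookkeeping that $E$ lies in two facets and is disjoint from exactly one.
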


\begin{figure}[ht]
\centering
\begin{tikzpicture}[scale=.5]
\fill (-1,0) node[left] {$[2]$} circle[radius=.2cm];
\draw (0,0) circle[radius=1cm] node[right=5mm] {$(1)$};
\end{tikzpicture}
\caption{$\Gamma(P_{p4})$, see Definition~\ref{rem:grafo-cirugia}.}
\label{grafo:piramide}
\end{figure}
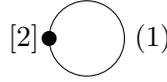

\subsection{Smoothing of triangular  bipyramid}
\mbox{}

\begin{figure}[ht]
\centering
﻿\begin{tikzpicture}[scale=.9]

\begin{scope}[shift={(0,0)}]
\foreach \x in {1,3,5}
{
\coordinate (A\x) at ({60*(\x-3)}:1.5);
\fill (A\x) circle [radius=.1];
\node at ($1.26*(A\x)$) {$A_\x$};
}
\foreach \x in {2,4,6}
{
\coordinate (A\x) at ({60*(\x-3)+15}:1.5);
\fill (A\x) circle [radius=.1];
\node at ($1.26*(A\x)$) {$A_\x$};
}
\foreach \x [evaluate=\x as \y using \x+1] in {1,...,5}
\foreach \z in {\y,...,6}
{
\draw[name path=(L\x-\z)] (A\x) -- (A\z);
}
\fill[red] (0,0) circle [radius=.1];
\end{scope}
\end{tikzpicture}
 ﻿\begin{tikzpicture}[scale=.9]
\hexagono{0,0}
\end{tikzpicture}
 ﻿\begin{tikzpicture}[scale=.9]

\begin{scope}[shift={(0,0)}]
\foreach \x in {1,3,5}
{
\coordinate (A\x) at ({60*(\x-3)}:1.5);
\fill (A\x) circle [radius=.1];
\node at ($1.26*(A\x)$) {$A_\x$};
}
\foreach \x in {2,4}
{
\coordinate (A\x) at ({60*(\x-3)+15}:1.5);
\fill (A\x) circle [radius=.1];
\node at ($1.26*(A\x)$) {$A_\x$};
}

\coordinate (A6) at ({60*(3)-15}:1.5);
\fill (A6) circle [radius=.1];
\node at ($1.26*(A6)$) {$A_6$};

\foreach \x [evaluate=\x as \y using \x+1] in {1,...,5}
\foreach \z in {\y,...,6}
{
\draw[name path=(L\x-\z)] (A\x) -- (A\z);
}
\fill[red] (0,0) circle [radius=.1];
\end{scope}
\end{tikzpicture}
 \caption{$\multiconj\subset\R^2$ defining the equations of the bypiramid and the topological types of the smoothings.}
\label{fig:hexagono0}
\end{figure}

The polytope associated with $\multiconj\subset\R^2$
as in the center of Figure~\ref{fig:hexagono0} is the triangular bipyramid $BP_3$ in~$\R^6$. There are three singular $4$-vertices, each one with two possible smoothings, and therefore there exist $2^3=8$  simple polyhedra.
The bipyramid is defined by
$\multiconj:=(A_1,\dots,A_6)$, where $A_j:=\left(\cos\alpha_j,\sin\alpha_j\right)$, and $\alpha_j:=\frac{\pi (j - 3)}{3}$.

Note that deforming $A_1,A_2,A_4,A_5$ to be the vertices of the square of edge~$2$ centered at the origin,
the equation of the triangular bipyramid in $\R^6$ is
\begin{equation*}
\begin{matrix}
-r_1 &+& r_2 &+& r_3&+&r_4 &-& r_5 &-& r_6&=0\\
-r_1 &-&r_2 &+&     & &r_4 &+& r_5&&&=0\\
r_1  &+&r_2 &+& r_3&+&r_4&+&r_5&+&r_6&=1.
\end{matrix}
\end{equation*}

We can define a top-versal smoothing with parameter space $(-\varepsilon,\varepsilon)^2$, for some $0<\varepsilon\ll 1$, where
\[
\multiconj_{\mathbf{s}}\!:=\!(A_1, A_{2,s}, A_3, A_{4, s}, A_5, A_{6,t}),
\]
where $A_{2, s}:= \left(\cos(\alpha_2+s),\sin(\alpha_2\!+s)\right)$, 
$A_{4, s}:= \left(\cos(\alpha_4+s),\sin(\alpha_4+s)\right)$, and 
$A_{6, t}:= \left(\cos(\alpha_6+t),\sin(\alpha_6+t)\right)$, $s,t\in(-\varepsilon,\varepsilon)$.
This smoothing contains all the topological types. 

We come back to the general smoothing defined by \emph{moving} the $4$-vertices in $BP_3$,
for which we have give $2^3=8$ possible smoothings. The group of symmetries of $BP_3$ (isomorphic to $\mathfrak{S}_3\times\mathbb{Z}/2$)
acts on this space. This group is generated by the symmetry group of the intersection of the two pyramids and the exchange of the apices.

\begin{proposition}
Under the action of the symmetry group there are two orbits:
\begin{enumerate}[label=\rm(\roman{enumi})]
\item The cube $C$ ($2$ smoothings),
see Figure{\rm~\ref{bipyramid3a}};

\item the polyhedron $T_2$ ($6$ smoothings), see Figure{\rm~\ref{bipyramid3b}},
which is the \emph{book} with a pentagonal leaf or, equivalently, the double truncated tetrahedron.
\end{enumerate}

Any $(2,2)$-flip of $C$ produces $T_2$; two of the flips in a red edge of $T_2$ keep it, and the third one passes it to $C$.
\end{proposition}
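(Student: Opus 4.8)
The plan is to analyze the $2^3 = 8$ smoothings of $BP_3$ combinatorially, identify the two orbits under the symmetry group $\mathfrak{S}_3 \times \mathbb{Z}/2$, and then track the $(2,2)$-flips between them. First I would set up coordinates: label the three $4$-vertices $V_1, V_2, V_3$ of $BP_3$, and for each $V_i$ record its \emph{link}, which (being a $4$-vertex in dimension $3$) is a quadrangle. A smoothing is a choice, for each $i$, of one of the two diagonals of the link quadrangle of $V_i$ along which to resolve; this replaces $V_i$ with an edge $E_i$ and truncates the polytope accordingly. So a smoothing is an element of $\{+,-\}^3$, where the sign at position $i$ records which diagonal was chosen at $V_i$. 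The symmetry group acts on this cube of choices, and the claim is that the orbits are: the single antipodal pair giving the cube $C$ (two smoothings), and the remaining six giving the polyhedron $T_2$.

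Next I would identify the two orbits geometrically. For the first: one checks that resolving all three $4$-vertices ``consistently'' (the choice dictated by the $\mathbb{Z}/2$ symmetry exchanging the two apices, applied coherently) produces a polytope with $6$ faces, $8$ vertices, $12$ edges — i.e., combinatorially the cube $C$ — and that the two such choices are swapped by a reflection, hence form one orbit of size $2$. Concretely, using the explicit equations of $BP_3$ in $\R^6$ displayed above and the deformation $\multiconj_{\mathbf{s}}$, I would exhibit the two cube-smoothings as the two ``diagonal'' quadrants of the parameter square (say $s,t$ same sign) and verify the combinatorial type directly (count faces/vertices, or better, recognize $\Delta^1 \times \Delta^1 \times \Delta^1$). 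For the second orbit: the remaining six smoothings are the six quadrants/boundary cases where the resolutions are not all consistent; by the transitivity of $\mathfrak{S}_3 \times \mathbb{Z}/2$ on these (a routine check on the action: $\mathfrak{S}_3$ permutes the three $4$-vertices, $\mathbb{Z}/2$ flips all three signs simultaneously, and the stabilizer of one such smoothing has order $2$, giving orbit size $6$) they all give the same combinatorial type, which I would then identify with the ``book with a pentagonal leaf,'' equivalently the tetrahedron truncated at two vertices (count: $4 + 2 = 6$ original faces contribute, plus two triangular truncation faces, adjusting for the book structure — I would verify the face lattice matches).

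For the flip statements, I would use the elementary-cobordism picture: a $(2,2)$-flip at vertex $V_i$ flips the $i$-th sign. Starting from a cube smoothing $(+,+,+)$, flipping any one coordinate lands in the six-element orbit, so \emph{any} $(2,2)$-flip of $C$ yields $T_2$. Starting from a $T_2$ smoothing, say $(+,+,-)$: flipping the first or second coordinate stays in the six-element orbit ($(−,+,−)$ and $(+,−,−)$ are still ``inconsistent''), so two of the three flips keep $T_2$; flipping the third gives $(+,+,+)$, the cube. This matches the assertion ``two of the flips in a red edge of $T_2$ keep it, and the third one passes it to $C$,'' once I confirm that the ``red edges'' of $T_2$ are exactly the three edges $E_1, E_2, E_3$ replacing the $4$-vertices and that the flip at $V_i$ is the one supported on $E_i$.

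The main obstacle I anticipate is the \emph{combinatorial identification of $T_2$}: showing that all six inconsistent smoothings really are combinatorially (and differentiably) the same polytope, and that this polytope is the double-truncated tetrahedron / pentagonal book. Transitivity of the group action gives combinatorial equality for free, but pinning down the isomorphism type requires a careful face-count or an explicit vertex-edge-face incidence check — the ``book with a pentagonal leaf'' description suggests a non-generic feature (a quadrilateral face degenerating, or two faces sharing more than an edge) that must be read off correctly from the relative position of $BP_3$ with respect to the coordinate hyperplanes in $\R^6$. I would handle this by drawing the truncation explicitly in one representative case (e.g.\ truncate $BP_3$ at $V_1$ and $V_2$ but resolve $V_3$ oppositely) and checking the resulting $6$-face polytope against the known face lattice, then invoking symmetry for the rest.
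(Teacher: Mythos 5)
Your proposal is correct and follows essentially the same route as the paper: the paper likewise parametrizes the $2^3$ smoothings by the binary choice at each of the three $4$-vertices, uses the $\mathfrak{S}_3\times\mathbb{Z}/2$ symmetry to split them into the all-consistent pair (the cube $C$) and the remaining six ($T_2$), and reads off the flip behaviour exactly as you do, only arguing from Figures~\ref{bipyramid3a} and~\ref{bipyramid3b} where you make the sign bookkeeping explicit. One harmless imprecision: in the actual action the transpositions of $\mathfrak{S}_3$ (like the apex exchange) act on your sign vectors with a global sign flip rather than by pure coordinate permutation, since a reflection of the equatorial triangle reverses the chiral choice at each $4$-vertex; but the subgroup of $\aut(\{\pm\}^3)$ generated is the same, so your orbit count $2+6$ and the flip analysis are unaffected.
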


\begin{proof}
In  Figure~\ref{bipyramid3a} (resp. Figure~\ref{bipyramid3b}) we have a smoothing
symmetric with a $3$-cycle in $\mathfrak{S}_3\subset\aut BP_3$ (resp. a 
transposition).
The two elements of the orbits of $C$ are the one in Figure~\ref{bipyramid3a} and the one with all the opposite smoothings. From Figure~\ref{bipyramid3b}, one can easily check how to find the
smoothings for $T_2$.
\end{proof}

  \begin{figure}[ht]
\begin{center}
\begin{tikzpicture}[scale=.7]
\newcommand\parte[1]{
\begin{scope}[rotate=#1]
\draw (0,0) -- (90:2) (90:1) -- (210:1);
\draw[dashed] (90:2) -- (90:3);
\draw[line width=1.2, red, shift={(90:1)}, rotate=75] (-1/2,0) -- (1/2,0);
\end{scope}
}
\parte{0}
\parte{120}
\parte{-120}

\begin{scope}[shift={(10,-.5)}]

\draw[line width=1.2, red] (0,0) node[black, below] {$1$} -- (2,0)
node[black, below] {$2$} (2,2) node[black, below left] {$3$} -- (2.5, 2.5) node[right, black] {$4$}
(.5, 2.5);

\draw[line width=1.2, red, dashed] (.5, 2.5) node[black, left] {$5$} -- (.5, .5) node[black, above right] {$6$};

\draw (2,0) -- (2,2) -- (0, 2) node[left] {$8$} -- (0,0)
(0,2) -- (.5, 2.5) -- (2.5, 2.5)
(2,0) -- (2.5, .5) node[right] {$7$} -- (2.5, 2.5);
\draw [dashed] (0, 0) -- (.5, .5) -- (2.5, .5);
\end{scope}

\begin{scope}[shift={(6,0)}]
\draw[line width=1.2, red, shift={(90:1)}, rotate=75] (-1/2,0) -- (1/2,0);
\draw[line width=1.2, red, shift={(-30:1)}, rotate=135] (-1/2,0) -- (1/2,0);
\draw[line width=1.2, red, shift={(210:1)}, rotate=195] (-1/2,0) -- (1/2,0);
\draw[dashed] (90:3) -- (90:2);
\draw (90:2) to[out=-90, in=75] ($(90:1)+(75:1/2)$) to ($(-30:1)+(135:1/2)$);
\draw[dashed] (210:3) -- (210:2);
\draw (210:2) to[out=30, in=195] ($(210:1)+(195:1/2)$) to ($(90:1)-(75:1/2)$);
\draw[dashed] (-30:3) -- (-30:2);
\draw (-30:2) to[out=150, in=-45] ($(-30:1)+(135:-1/2)$) to ($(210:1)-(195:1/2)$);
\draw (0,0) -- ($(90:1)-(75:1/2)$) (0,0) -- ($(210:1)-(195:1/2)$)
(0,0) -- ($(-30:1)-(135:-1/2)$);
\node at (20:2) {$0:7$};
\node at (5:2) {$\infty:8$};
\node[right] at ($(90:1)+(75:1/2)$) {$1$};
\node[left] at ($(90:1)-(75:1/2)$) {$2$};
\node[left] at ($(210:1)+(195:1/2)$) {$3$};
\node[below] at ($(210:1)-(195:1/2)$) {$4$};
\node[below] at ($(-30:1)+(135:-1/2)$) {$5$};
\node[right] at ($(-30:1)-(135:-1/2)$) {$6$};
\end{scope}

\end{tikzpicture}
 \caption{$C$-smoothing of the triangular bipyramid. The pictures on the left represent the bipyramid after stereographic projection where
the point at infinity is one of the simple vertices.}
\label{bipyramid3a}
\end{center}
\end{figure}

  \begin{figure}[ht]
\begin{center}
\begin{tikzpicture}[scale=.7]
\newcommand\parteb[1]{
\begin{scope}[rotate=#1]
\draw (0,0) -- (90:2) (90:1) -- (210:1);
\draw[dashed] (90:2) -- (90:3);
\end{scope}
}
\parteb{0}
\parteb{120}
\parteb{-120}

\draw[line width=1.2, red, shift={(90:1)}, rotate=75] (-1/2,0) -- (1/2,0);
\draw[line width=1.2, red, shift={(-30:1)}, rotate=135] (-1/2,0) -- (1/2,0);
\draw[line width=1.2, red, shift={(210:1)}, rotate=225] (-1/2,0) -- (1/2,0);

\begin{scope}[shift={(10,-.5)}]

\draw[line width=1.2, red] (0,0) -- (2, 0) (3, 1) -- (1.5, 2);

\draw[line width=1.2, red, dashed] (.5, .5) -- (1.5, 1);

\draw  (2, 0) -- (3, 1)  (1.5, 2) node[right=2pt] {$6$} -- (.5,1) node[above] {$7$} --
(0,0) node[below] {$3$}
(2, 0) node[below] {$4$} -- (3, .5) -- (3, 1) node[above] {$5$};

\draw[dashed] (0,0) -- (.5, .5) node[below right=-4pt] {$2$} (1.5, 1) node[below] {$1$} -- (3, .5) node[below] {$8$} (.5, .5) -- (.5, 1)  (1.5, 1) -- (1.5, 2) ;

\end{scope}

\begin{scope}[shift={(6,0)}]
\begin{scope}
\draw[line width=1.2, red, shift={(90:1)}, rotate=75] (-1/2,0) -- (1/2,0);
\draw[line width=1.2, red, shift={(-30:1)}, rotate=135] (-1/2,0) -- (1/2,0);
\draw[line width=1.2, red, shift={(210:1)}, rotate=225] (-1/2,0) -- (1/2,0);
\draw[dashed] (90:3) -- (90:2);
\draw (90:2) to[out=-90, in=75] ($(90:1)+(75:1/2)$) to ($(-30:1)+(135:1/2)$);
\draw[dashed] (210:3) -- (210:2);
\draw (210:2) to[out=30, in=225] ($(210:1)+(225:1/2)$);
\draw ($(210:1)-(225:1/2)$) to ($(90:1)-(75:1/2)$);
\draw[dashed] (-30:3) -- (-30:2);
\draw (-30:2) to[out=150, in=-45] ($(-30:1)+(135:-1/2)$) to ($(210:1)+(225:1/2)$);
\draw (0,0) -- ($(90:1)-(75:1/2)$) (0,0) -- ($(210:1)-(225:1/2)$)
(0,0) -- ($(-30:1)-(135:-1/2)$);
\node at (20:2) {$0:7$};
\node at (0:2) {$\infty:8$};
\node[right] at ($(90:1)+(75:1/2)$) {$1$};
\node[left] at ($(90:1)-(75:1/2)$) {$2$};
\node[below] at ($(210:1)+(225:1/2)$) {$4$};
\node[left] at ($(210:1)-(225:1/2)$) {$3$};
\node[below] at ($(-30:1)+(135:-1/2)$) {$5$};
\node[right] at ($(-30:1)-(135:-1/2)$) {$6$};
\end{scope}

\end{scope}

\end{tikzpicture}
 \caption{$T_2$-smoothing of the triangular bipyramid.}
\label{bipyramid3b}
\end{center}
\end{figure}

The polyhedra have six bidimensional faces, one on each hyperplane $r_i=0$ and then the two obtained manifolds are connected. Using, e.g.~\cite{Gi-LdM}, we have that $Z(C)= \mathbb{S}^1\times \mathbb{S}^1\times \mathbb{S}^1=\mathbb{T}^3$.

Observe that the only simple polyhedra with 6 faces and 8 vertices are the cube $C$ and the double truncated tetrahedron $T_2$ and both are obtained in the smoothing of $BP_3$.

To obtain the manifold $Z(T_2)$ we use the following result.

\begin{proposition}[{\cite[Theorem~2.1]{Gi-LdM}}, cf. also \cite{B-M}]\label{prop:truncar}
Given a simple polytope $P$ of dimension~$d$, let $P_{\text{\rm sf}}(v)$ be the
the result of truncating the vertex $v$ and let $n$ be the number
of facets.
Then
$Z(P_{\text{\rm sf}}(v))$ is diffeomorphic to the connected sum
\[
Z(P) \# Z(P) \# (2^{n-d}-1) (\mathbb{S}^{d-1} \times \mathbb{S}^1).
\]
\end{proposition}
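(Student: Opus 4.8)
The plan is to realise $Z(P_{\text{\rm sf}}(v))$ as an explicit \emph{double} of a punctured copy of $Z(P)$, and then to reduce to a standard surgery computation. Write $d=\dim P$. Since $P$ is simple (and geometrically embedded), truncating $v$ produces a simple polytope $P_{\text{\rm sf}}(v)$ that keeps the $n$ facets of $P$ (the $d$ ones through $v$ having in turn the vertex $v$ truncated) and acquires one new facet $F_{0}\cong\Delta^{d-1}$, meeting precisely the $d$ facets through $v$; combinatorially $P_{\text{\rm sf}}(v)=P\setminus\Delta$ with $\Delta$ a small simplicial corner at $v$, disjoint from the $n-d$ facets of $P$ not containing $v$. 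I would put $F_{0}$ on the hyperplane $x_{n+1}=0$, let $\sigma_{0}\colon x_{n+1}\mapsto-x_{n+1}$ be the reflection across it, and set $Y:=Z(P_{\text{\rm sf}}(v))\cap\{x_{n+1}\ge0\}$. Then $Z(P_{\text{\rm sf}}(v))=Y\cup\sigma_{0}(Y)$ with $Y\cap\sigma_{0}(Y)=\partial Y=Z(P_{\text{\rm sf}}(v))\cap\{x_{n+1}=0\}$, and $\sigma_{0}$ fixes $\partial Y$ pointwise while giving a homeomorphism $Y\to\sigma_{0}(Y)$; hence $Z(P_{\text{\rm sf}}(v))$ is the double $D(Y)$.

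Next I would identify $Y$. It consists of the $2^{n}$ copies of $P_{\text{\rm sf}}(v)=P\setminus\Delta$ obtained by reflecting in the $n$ old facets only; as $\Delta$ avoids the facets of $P$ not through $v$, assembling these copies is the same as deleting from $Z(P)$ the full preimage of $\Delta$. This is where simplicity of $v$ is used: the link of $v$ is $\Delta^{d-1}$, so near each of the $2^{n-d}$ points of $Z(P)$ over $v$ the local model of $Z(P)$ is the standard $\R^{d}$ (the corner $\R^{d}_{\ge0}$ reflected in the $d$ coordinate hyperplanes), and the preimage of $\Delta$ is a disjoint union of $2^{n-d}$ open $d$-balls, one around each such point. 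Therefore $Y\cong Z(P)\setminus\bigl(2^{n-d}\text{ disjoint open }d\text{-balls}\bigr)$, a manifold whose boundary is $2^{n-d}$ copies of $\mathbb{S}^{d-1}$.

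The last ingredient is the topological lemma: for a closed oriented $d$-manifold $M$, the double of $M$ with $k$ open disks removed is diffeomorphic to $M\#M\#(k-1)(\mathbb{S}^{d-1}\times\mathbb{S}^{1})$. I would prove it by performing the $k$ boundary identifications of the double one at a time: the first identification fuses the two disjoint copies of $M\setminus(k\text{ disks})$ into $M\#M$ minus $2(k-1)$ disks, and each of the remaining $k-1$ identifications glues two sphere boundary components of a single connected manifold, which amounts to a connected summation with $\mathbb{S}^{d-1}\times\mathbb{S}^{1}$. With $M=Z(P)$ and $k=2^{n-d}$ this yields the stated formula, all of it carried out smoothly since $P_{\text{\rm sf}}(v)$ is simple and $Z(P_{\text{\rm sf}}(v))$ is a transverse intersection of quadrics.

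The delicate point is the orientation/framing bookkeeping in the lemma — certifying that doubling across $F_{0}$ attaches \emph{untwisted} handles $\mathbb{S}^{d-1}\times\mathbb{S}^{1}$ and not the twisted bundle $\mathbb{S}^{d-1}\,\widetilde{\times}\,\mathbb{S}^{1}$ — together with checking that $\sigma_{0}$ restricts on each of the $2^{n-d}$ boundary spheres to the standard identification; the latter is exactly what the hypothesis that $v$ be simple guarantees, and the former works because $\sigma_{0}$ is an orientation-reversing involution, so all the gluing orientations can be matched (when $Z(P)$ is non-orientable one picks up twisted summands instead, but in the cases treated here $Z(P)$ is orientable). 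Alternatively, one can deduce the same statement from the flip formalism of \cite{B-M}, observing that $P_{\text{\rm sf}}(v)$ is obtained from $P$ by a flip of type $(1,d)$ realised inside the elementary cobordism $\bigl(P\times[0,1]\bigr)_{\text{\rm sf}}(v\times\{1\})$ and tracking the induced surgeries on $Z$.
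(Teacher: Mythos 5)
The paper does not prove this statement at all: it is imported verbatim as \cite[Theorem~2.1]{Gi-LdM}, so there is no internal proof to compare against. Your sketch is, in essence, a correct reconstruction of the argument behind that cited theorem: realize $Z(P_{\text{\rm sf}}(v))$ as the double of $Y=Z(P_{\text{\rm sf}}(v))\cap\{x_{n+1}\ge 0\}$ across the reflection in the new facet's hyperplane, identify $Y$ with $Z(P)$ minus the $2^{n-d}$ balls lying over a simplicial corner at the simple vertex $v$ (the count $2^{n-d}$ being exactly the number of facets missing $v$), and then apply the standard lemma that the double of a closed $d$-manifold with $k$ open balls removed is $M\#M\#(k-1)(\mathbb{S}^{d-1}\times\mathbb{S}^1)$; all the steps check out, and the transversality/smoothness points you wave at are routine under (WH). On the one ``delicate point'' you flag: the double construction a priori gives $Z(P)\#\overline{Z(P)}$ and untwisted tubes only up to orientation bookkeeping, but this is settled cleanly by noting that $Z(P)$ always carries an orientation-reversing self-diffeomorphism, namely the restriction of any coordinate reflection $x_i\mapsto -x_i$ whose hyperplane meets $Z(P)$ (its fixed set has codimension one in $Z(P)$), so $\overline{Z(P)}\cong Z(P)$ and the summands can be taken untwisted; in the non-orientable case the twisted and untwisted $\mathbb{S}^{d-1}$-bundles over $\mathbb{S}^1$ become interchangeable after connected sum anyway. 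Your closing alternative via a $(1,d)$-flip in the sense of \cite{B-M} is also consistent with the ``cf.'' in the statement.
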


\begin{remark}
Observe that the topology of $Z(P_{\text{\rm sf}}(v))$ does not depend on which vertex of $P$ is truncated. This gives infinitely many examples of different $P$ with the same $Z(P)$, in particular, the result of the operation $P_{\text{\rm sf}}(v)$ on a simplex successively
$n$ times is independent of the vertices chosen.
Curiously, in any dimension we get the same number of copies of
$\mathbb{S}^{d-1} \times \mathbb{S}^1$ since it only depends on $m-d$. This includes the genera of the surfaces associated to the polygons.
\end{remark}

Let $\Delta$ be a tetrahedron and let $v, w$ be two vertices.
Then,
\[
Z(\Delta_{\text{\rm sf}}(v)) \cong
Z(\Delta) \# Z(\Delta)\# (2^{4-3} -1)(\mathbb{S}^2 \times \mathbb{S}^1)
\cong \mathbb{S}^2 \times \mathbb{S}^1
\]
and
\[
Z(T_2) = Z((\Delta_{\text{\rm sf}}(v))_{\text{\rm sf}}(w)) \cong
\mathbb{S}^2 \times \mathbb{S}^1 \# \mathbb{S}^2 \times \mathbb{S}^1
\# (2^{5-3} -1)(\mathbb{S}^2 \times \mathbb{S}^1)\cong
5\# (\mathbb{S}^2 \times \mathbb{S}^1).
\]
The manifolds $Z(C)$ and $Z(T_2)$ are tessellated by $2^6=64$ copies of $C$ or $T_2$ respectively.
Recall that the $(2,2)$-flips are $0$-surgeries on the following link.
Each one of the three red edges $E_i$ in the cube $C$ replacing a singular vertex of the bipyramid $BP_3$ give rise to a link $L_i\subset Z(C)$ of $2^2$ components, each of these components is made by $2^2$ copies of $E_i$ and is one fiber of the 3-dimensional torus.
Observe that a tubular neighbourhood of $L_i$ has a \emph{canonical longitude} defined by a parallel fiber or by the intersection with the copies in $Z(C)$  of one of the two  faces of $C$ containing $E_i$. Then 0-surgery in $L_i\subset Z(C)$ produces the manifold $Z(T_2)$. We have proved the following result.

\begin{proposition}\label{prop:pZdebipiramide}
 The result of $0$-surgery on a  link composed by $4$ parallel fibers of the $3$-dimensional torus $\mathbb{T}_3$ is the manifold $5\# (\mathbb{S}^2 \times \mathbb{S}^1)$
\end{proposition}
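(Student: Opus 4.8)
The statement is essentially a repackaging of the description of the two smoothings $C$ (the cube) and $T_2$ (the double truncated tetrahedron) of the triangular bipyramid $BP_3$, so the plan is to assemble the relevant pieces rather than to prove anything genuinely new. First I would recall the two manifold computations already obtained: $Z(C)\cong\mathbb{S}^1\times\mathbb{S}^1\times\mathbb{S}^1=\mathbb{T}_3$, and $Z(T_2)\cong 5\#(\mathbb{S}^2\times\mathbb{S}^1)$, the latter coming from two applications of Proposition~\ref{prop:truncar} starting from a tetrahedron $\Delta$ with $Z(\Delta)\cong\mathbb{S}^3$. Both $C$ and $T_2$ are simple $3$-polytopes with $6$ facets and $8$ vertices, they occur among the $8$ smoothings of $BP_3$, and by the proposition describing the two orbits ($C$ and $T_2$) a single $(2,2)$-flip at one of the three $4$-vertices of $BP_3$ carries $C$ to $T_2$.

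Next I would use the translation of a $(2,2)$-flip into a Dehn surgery coming from \cite{B-M}: such a flip is realised by an elementary cobordism $W$ (a simple $4$-polytope with $C$ and $T_2$ as disjoint facets and one extra vertex), and passing to the reflection manifolds the cobordism $Z(W)$ exhibits $Z(T_2)$ as the result of surgery on $Z(C)$ along the link $L\subset Z(C)$ formed by all the reflected copies, in $Z(C)$, of the new edge $E$ that replaces the chosen $4$-vertex; the surgery slope is the \emph{canonical longitude}, which can be taken to be a parallel push-off of $E$ inside one of the two facets of $C$ that contain it. It then remains to identify $L$. Since $E$ lies in exactly $2$ of the $6$ facets of $C$, there are $2^{6-2}=16$ copies of $E$ in $Z(C)$; tracing an edge through the two ``third facets'' at its two endpoints, exactly as in the square pyramid case of Figure~\ref{4pyra}, these $16$ copies assemble into $2^2=4$ disjoint circles, each a concatenation of $4$ copies of $E$, and each such circle is a fibre of the product decomposition $Z(C)=\mathbb{S}^1\times\mathbb{S}^1\times\mathbb{S}^1$; moreover the canonical longitude coincides with a nearby parallel fibre, i.e.\ with the ``$0$'' slope. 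Hence $L$ is a link of $4$ parallel fibres of $\mathbb{T}_3$, and $0$-surgery on it turns $Z(C)=\mathbb{T}_3$ into $Z(T_2)=5\#(\mathbb{S}^2\times\mathbb{S}^1)$, which is the assertion.

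The point that needs care—and the only real obstacle—is this last identification: verifying that the surgery slope prescribed by the flip is genuinely the parallel-fibre slope (the ``$0$'' in ``$0$-surgery''), and that the four circles are honest unlinked parallel fibres rather than merely homologous to a fibre. Both follow from carefully bookkeeping the $(\mathbb{Z}/2)^6$-action on the copies of $E$ (which circle meets which facet, and which facet trace realises the longitude) together with the handle attached inside the cobordism $Z(W)$; none of this is difficult, but it is the step where the combinatorics of $C$, $T_2$ and $W$ must be pinned down explicitly.
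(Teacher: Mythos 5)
Your proposal follows essentially the same route as the paper: identify $Z(C)\cong\mathbb{T}_3$ and $Z(T_2)\cong 5\#(\mathbb{S}^2\times\mathbb{S}^1)$ (the latter by truncating a tetrahedron twice), and then realise the $(2,2)$-flip between the two smoothings of $BP_3$ as a $0$-surgery along the $4$-component link of fibres formed by the $16$ reflected copies of the new edge, with the canonical longitude given by a parallel fibre or by the trace of a facet containing the edge. The extra care you flag about pinning down the slope via the elementary cobordism is only a more explicit version of the paper's identification of the canonical longitude, so the two arguments coincide in substance.
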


Analogously, a $0$-surgery in the $4$-component link in $Z(T_2)$ defined by the red edge belonging to a pentagonal face  in $T_2$ produces the manifold $Z(C)$, meanwhile  $0$-surgery in the $4$-component link in $Z(T_2)$ defined by the red edge belonging to a square face  in $T_2$ produces again $Z(T_2)$.

The graph $\Gamma(BP_3)$ is in Figure~\ref{fig:grafo-bp3}: recall that the weights of the edges refer both to the number of $(2,2)$-flips in the polytopes and to the number of equivariant
$0$-surgeries in the link of the manifold.

\begin{figure}[ht]
\centering
\begin{tikzpicture}

\coordinate (A) at (0,0);
\coordinate (B) at (2,0);
\fill (A) node[left] {$C[2]$} circle [radius=.1];
\draw[->-=.5] (A) -- node[above] {$(3,-1)$} (B);
\draw (B) arc [start angle=-180,end angle=180,radius=1];
\filldraw[fill=white] (B) node[right] {$[6]T_2$} circle [radius=.1] node[right=1.95cm] {$(2)$};
\end{tikzpicture}
 \caption{Decorated graph $\Gamma(BP_3)$.}
\label{fig:grafo-bp3}
\end{figure}

 \subsection{Hyperbolic bipyramid}\label{subsec:hbp}
\mbox{}

There exits a finite volume hyperbolic triangular bipyramid whose three $4$-vertices are ideal points and whose dihedral angles
are $\frac{\pi}{2}$. This result  follows from Andreev's Theorem (\cite{and1970b}, \cite[Chap. 6, Theorem 2.8]{alekseevskij-vinberg-sl:93}). We give here a construction of such hyperbolic triangular bipyramid.

\begin{figure}[ht]
\centering
\begin{tikzpicture}[scale=1.75]
\coordinate (A) at (0,0);
\coordinate (B) at (0,1);
\coordinate (C) at (1,0);
\coordinate (D) at (-135:1);
\coordinate (E) at (1/4,1/8);
\draw (A) node[left] {$A$} -- ($1.2*(B)$) (A) -- ($1.2*(C)$) (A) -- ($1.2*(D)$);
\draw[line width=1] (E) node[above right] {$E$} -- (B) node[above left] {$B$} (E) -- (C) node[below right] {$C$} (E) -- (D) node[left] {$D$} (B) -- (C) -- (D) --cycle;
\end{tikzpicture}
 \caption{}
\label{fig:hbp}
\end{figure}

  \begin{proposition}\label{prop:hbp}
There is a bipyramid $BP_H$ in the three-dimensional hyperbolic space $\mathbb{H}^3$ where all the $4$-vertices are ideal.
\end{proposition}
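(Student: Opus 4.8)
The plan is to construct $BP_H$ by doubling a single hyperbolic tetrahedron across its ideal face. First I would isolate the combinatorial picture: a triangular bipyramid is the union of two tetrahedra $ABCD$ and $EBCD$ glued along the equatorial triangle $BCD$, with the two apices $A,E$ each lying in three facets and the equatorial vertices $B,C,D$ each lying in four (see Figure~\ref{fig:hbp}). So it suffices to produce one hyperbolic tetrahedron $T$ with ideal vertices $B,C,D$, a finite vertex $A$, dihedral angle $\frac{\pi}{2}$ along each of the edges $AB,AC,AD$, and dihedral angle $\frac{\pi}{4}$ along each of the edges $BC,CD,DB$ of the ideal face: reflecting $T$ in the hyperbolic plane through $B,C,D$ (call the reflection $\sigma$) and setting $BP_H:=T\cup\sigma(T)$ then produces a finite-volume convex polyhedron whose dihedral angles along the apical edges remain $\frac{\pi}{2}$, whose dihedral angles along the equatorial edges become $\frac{\pi}{4}+\frac{\pi}{4}=\frac{\pi}{2}$, and whose $4$-vertices $B,C,D$ are ideal.

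To construct $T$ I would use the upper half-space model $\mathbb{H}^3=\{(x,y,t):t>0\}$ with $\partial\mathbb{H}^3=\mathbb{C}\cup\{\infty\}$. Put $A=(0,0,1)$ and let the three facets through $A$ be the vertical planes $\Pi_1=\{x=0\}$, $\Pi_2=\{y=0\}$ and the hemisphere $\Pi_3=\{x^2+y^2+t^2=1\}$; since $\Pi_1,\Pi_2$ pass through the centre of $\Pi_3$ these three planes are mutually orthogonal and all contain $A$, so the dihedral angles along $AB=\Pi_1\cap\Pi_2$, $AC=\Pi_1\cap\Pi_3$, $AD=\Pi_2\cap\Pi_3$ are forced to be $\frac{\pi}{2}$. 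Each of these three geodesics issues from $A$ and hits $\partial\mathbb{H}^3$ in two points; choosing $B=\infty$ on $\Pi_1\cap\Pi_2$, $C=(0,1,0)$ on $\Pi_1\cap\Pi_3$ and $D=(1,0,0)$ on $\Pi_2\cap\Pi_3$, the fourth facet $\Pi_4=BCD$ is forced to be the vertical plane over the line through $(0,1)$ and $(1,0)$, i.e.\ $\Pi_4=\{x+y=1\}$. One then checks that $T$ is the finite-volume region $\{x\ge 0,\ y\ge 0,\ x+y\le 1,\ x^2+y^2+t^2\ge 1\}\cap\mathbb{H}^3$, that it has exactly the four facets $\Pi_1,\dots,\Pi_4$ and the four vertices $A$ (finite), $B,C,D$ (ideal), that its links at $B,C,D$ are right isosceles Euclidean triangles, and that its link at $A$ is an all-right spherical triangle.

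It then remains to compute the three equatorial dihedral angles. Along $BC$ and $BD$ the two incident faces are vertical planes, so the dihedral angle equals the Euclidean angle in $\mathbb{C}$ between the corresponding lines ($\{x=0\}$, resp.\ $\{y=0\}$, against $\{x+y=1\}$), measured on the side occupied by $T$; along $CD$ the incident faces are the hemisphere $\Pi_3$ and the vertical plane $\Pi_4$, so the dihedral angle equals the angle in $\mathbb{C}$ between the circle $\{x^2+y^2=1\}$ and the line $\{x+y=1\}$ at the ideal endpoint $(1,0)$ (or $(0,1)$) of $CD$. Each of these three angles comes out to $\frac{\pi}{4}$. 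The genuinely delicate step, and the one I expect to require the most care, is to confirm in each case that it is the acute value $\frac{\pi}{4}$ and not its supplement $\frac{3\pi}{4}$ that is the \emph{interior} dihedral angle, i.e.\ to determine which of the two complementary wedges at the edge is cut out by the inequalities $x\ge 0$, $y\ge 0$, $x+y\le 1$, $x^2+y^2+t^2\ge 1$ near that edge (along $CD$, which lies on $\Pi_3$, one must use the tangent cone of $\{x^2+y^2+t^2\ge 1\}$ rather than a half-space). Granting this, $BP_H=T\cup\sigma(T)$ with $\sigma$ the reflection in $\Pi_4$ (so $E=\sigma(A)=(1,1,1)$) is the desired hyperbolic bipyramid with all dihedral angles $\frac{\pi}{2}$ and ideal $4$-vertices $B,C,D$; convexity holds because at every edge of the shared face $\Pi_4$ the two half-dihedral angles sum to $\frac{\pi}{2}<\pi$. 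Finally, the bare existence of such a right-angled bipyramid with ideal $4$-vertices also follows from Andreev's theorem cited above, which in addition gives uniqueness up to isometry.
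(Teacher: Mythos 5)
Your construction is correct, but it takes a genuinely different route from the paper. The paper works in the Klein model: it places the three $4$-vertices at $(1,0,0),(0,1,0),(0,0,1)$ and the apices at the origin and at $(a,a,a)$, notes that the angles along $AB,AC,AD$ are automatically right because the faces through $A$ lie in the coordinate planes, and then determines $a=\frac12$ by imposing orthogonality along $CD$ (computed via cross products of Euclidean normals to the bounding circles on $\partial\mathbb{B}^3$), checking the remaining edges by symmetry. You instead build, in the upper half-space model, the single tetrahedron $T$ with one finite all-right vertex and three ideal vertices, dihedral angles $\frac{\pi}{2},\frac{\pi}{2},\frac{\pi}{2},\frac{\pi}{4},\frac{\pi}{4},\frac{\pi}{4}$, and double it across the ideal face; this is exactly the tetrahedron $T_H$ that the paper only introduces later (\S\ref{sec:hyperbolic}) as the common tile for $BP_H$, $O_H$ and $RD_H$, and which the paper derives \emph{from} Proposition~\ref{prop:hbp} rather than the other way around. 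Your approach therefore yields the volume relation $\vol(BP_H)=2\vol(T_H)$ and the $\frac{\pi}{4}$ tetrahedral angles for free, and needs no free parameter; the paper's approach yields explicit coordinates for all five vertices in one symmetric picture whose faces through $A$ are the coordinate planes, which is what is used later to produce the rhombic dodecahedron by reflecting $BP_H$ in those three facets. The one step you flag as delicate — that the interior angles along $BC$, $BD$, $CD$ are $\frac{\pi}{4}$ and not $\frac{3\pi}{4}$ — does check out: along $BC$ and $BD$ it is the interior angle of the triangle $\{x\geq 0,\ y\geq 0,\ x+y\leq 1\}$ at $(0,1)$, resp.\ $(1,0)$, and along $CD$ the inward unit normals at the point $\left(\frac12,\frac12,\frac{1}{\sqrt2}\right)$ are $\left(\frac12,\frac12,\frac{1}{\sqrt2}\right)$ (outward radial, since $T$ lies outside the hemisphere) and $-\frac{1}{\sqrt2}(1,1,0)$, whose inner product $-\frac{1}{\sqrt2}$ gives dihedral angle $\frac{\pi}{4}$; since the half-space metric is conformal to the Euclidean one, these Euclidean angles are the hyperbolic ones. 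With that verification written out, your doubling argument (convexity from the angle sums $\frac{\pi}{4}+\frac{\pi}{4}<\pi$ along the glued face) is a complete and valid proof, and, as you note, bare existence also follows from Andreev's theorem, which the paper likewise records before giving its explicit model.
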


\begin{proof}
We use the Klein model of the  hyperbolic three-dimensional space $\mathbb{H}^3$. It consists in the open ball $\mathbb{B}^3$ of radius 1 in the three-dimensional Euclidean space $\mathbb{E}^3$. To every $k$-dimensional plane $\Pi\subset \mathbb{H}^3$ there corresponds a Euclidean plane $\widetilde{\Pi}\subset\mathbb{E}^3$ such that $\Pi=\widetilde{\Pi}\cap B^3$. This model is not conformal and the hyperbolic angle between two planes $\Pi_1$ and $\Pi_2$ is the Euclidean angle between their boundary  circles $\widetilde{\Pi}_1\cap \overline{B^3}$ and  $\widetilde{\Pi}_2\cap \overline{B^3}$.

Let us consider a triangular bipyramid in $\mathbb{B}^3$  with $4$-vertices at the points $D=(1,0,0)$, $C=(0,1,0)$, and $B=(0,0,1)$ and simple vertices at $A=(0,0,0)$ and $E=(a,a,a)$ for some $a\in\left(\frac{1}{3},\frac{\sqrt{3}}{3}\right)$.

It is clear that the dihedral angles at the edges  $AB$, $AC$ and $AD$ are $\frac{\pi}{2}$  because the intersections of the coordinate planes and the unit sphere are circles that meet orthogonally. We can compute $a$ in order to have also $\frac{\pi}{2}$ dihedral angles at the edges $BC$, $CD$, and $BD$. The edge $CD$ is the intersection
$ACD\cap ECD$.
The plane $\widetilde{\Pi} _{ECD}$ containing $ECD$, and hence the points $D=(1,0,0)$, $C=(0,1,0)$ and $E=(a,a,a)$, is
given by the equation
   \[
   ax+ay+(1-2a)z=a.
   \]
The plane $\widetilde{\Pi} _{ACD}$ is $ z=0$.
The intersection $\widetilde{\Pi} _{ACD} \bigcap \widetilde{\Pi}_{ECD}$ is a line intersecting $\partial \overline{\mathbb{B}}^3$ at the points $(1,0,0)$ and $(0,1,0)$. The angle  between the circles
$C_1=\widetilde{\Pi} _{ACD}\bigcap \partial \overline{\mathbb{B}}^3$ and $C_2=\widetilde{\Pi} _{ECD}\bigcap \partial \overline{\mathbb{B}}^3$ is the dihedral angle in $CD$.
The tangent vector $v_1$ to $C_1$ in $D=(1,0,0)$ is proportional to the cross product
of the normal vector of the plane $\widetilde{\Pi} _{ACD}$ and the normal vector to the tangent plane to $\partial \overline{B}^3$ at the point $D=(1,0,0)$:
\[
   v_1=(a,a,1-2a) \times (1,0,0)=(1,1-2a,-a)
   \]
Analogously the tangent vector $v_2$ to $C_2$ in $D=(1,0,0)$ is the cross product
   \[
   v_2=(0,0,1) \times (1,0,0)=(0,1,0)
   \]
Therefore the formula for the scalar product  and the condition on the angle to be $\frac{\pi}{2}$ allows us to compute $a$:
\[
0 = \langle v_1, v_2\rangle= \langle (1,1-2a,-a), (0,1,0)\rangle= 1-2a =0 \Rightarrow a=\frac{1}{2}
\]
By symmetry, the dihedral angles in  $BC$, and $BD$ is also $\frac{\pi}{2}$.
Therefore
\begin{alignat*}{4}
\widetilde{\Pi}_{ECD}:& \frac{1}{2}x+\frac{1}{2}y &= \frac{1}{2} &\Leftrightarrow x+y&=1\\
\widetilde{\Pi}_{EBC}:& \frac{1}{2}y+\frac{1}{2}z &= \frac{1}{2} &\Leftrightarrow y+z&=1\\
\widetilde{\Pi}_{EBD}:& \frac{1}{2}x+\frac{1}{2}z &= \frac{1}{2} &\Leftrightarrow x+z&=1.
\end{alignat*}
The angle between $\widetilde{\Pi} _{ECD}$ and $\widetilde{\Pi} _{EBC}$ at the point $C=(0,1,0)$ is computed as before.
\begin{align*}
v_{\widetilde{\Pi}_{ECD}}=(1,1,0)\times (0,1,0)&= (0,0,1)\\
v_{\widetilde{\Pi}_{EBC}}=(0,1,1)\times (0,1,0)&= (1,0,0)\\
\langle v_{\widetilde{\Pi}_{ECD}}, v_{\widetilde{\Pi}_{EBC}}\rangle=\langle(0,0,1),(1,0,0)\rangle&=0
\end{align*}
By symmetry again, the angle in all the others edges is also $\frac{\pi}{2}$.
\end{proof}

There is a tessellation $\mathfrak{T}_{BP}$ by these bipyramids in $\mathbb{H}^3$.
Let $G_{BP}$ the subgroup of $\aut\mathbb{H}^3$ generated by the hyperbolic reflections on the six planes containing the faces  of $BP_H$
which is the automorphism group of $\mathfrak{T}_{BP}$. The reflections corresponding to facets sharing an edge commute as they are orthogonal.

\subsection{Groups and the bipyramid}
\mbox{}

The $2$-skeleton of the tessellation by bipyramids is formed by hyperbolic planes tessellated by triangles with two ideal vertices, and the $1$-skeleton are geodesics.
The quotient $\mathbb{H}^3/G_{BP}$ defines a hyperbolic orbifold structure $\mathbf{BP}$ in $BP_H$.

Let us consider the natural epimorphism $\omega_{BP}:G_{BP}\twoheadrightarrow(\mathbb{Z}/2)^6$. Recall that $BP_3$ has a natural orbifold
coming from the fact that $BP_3$ is the quotient of $Z(BP_3)$ under the action of $(\mathbb{Z}/2)^6$ by reflections on the coordinate hyperplanes
of $\R^6$.

The existence of this natural epimorphism and the orbifold structure holds in general for any intersection $Z$ and
its associated polytope $P$: since $P$ is the quotient of $Z$ by an action
of $(\mathbb{Z}/2)^n$, it acquires a natural orbifold structure. If a point is
contained in $k$ coordinate hyperplanes, its isotropy group is $(\mathbb{Z}/
2)^k$.
The orbifold fundamental group of
$P$ is
\[
\pi_1^{\text{orb}}(P) =
\langle a_1,\dots,a_n\mid
a_i^2=1, [a_i,a_j]=1\text{ if } P\cap\{x_i=x_j=0\}\neq\emptyset
\rangle,
\]
see \cite{HDu,H:90} for the notion of \emph{orbifold fundamental group}.
In the case of a geometric embedding of a polytope with $n$ facets in $\R^n$, the generators
are in bijection with the facets and two generators commute if and only if the two facets have non-empty intersection. The case of simple polytopes
was studied in~\cite[Lemma~4.4]{DJ1991}; in this case two generators commute if and only if the facets intersect in a codimension~$2$ face.
The fundamental group of $Z$ is the kernel of the natural epimorphism $\pi_1^{\text{orb}}(P)\twoheadrightarrow(\mathbb{Z}/2)^n$.

For the case of geometrically embedded polyhedra of dimension~$3$, let $\check{P}$ be the
complement of the non-simple vertices in~$P$ and let $\check{Z}(P)\subset Z$ be its preimage under the quotient map.
Note that $\check{P}$ inherits a natural structure of orbifold for which
\[
\pi_1^{\text{orb}}(\check{P}) =
\langle a_1,\dots,a_n\mid
a_i^2=1, [a_i,a_j]=1\text{ if } \check{P}\cap\{x_i=x_j=0\}\neq\emptyset
\rangle.
\]
The following result is straightforward from Proposition~\ref{prop:hbp}.

\begin{coro}
The two orbifold structures over $\textbf{BP}$ and $\check{BP}_3$ are isomorphic. In particular, the manifold $\check{Z}(BP_3)$ admits a complete hyperbolic geometric structure
with $3\times 2^2$ ends of torus type,
$G_{BP}\cong\pi_1^{\text{\rm orb}}(\check{BP}_3)$, and $\check{Z}(BP_3)=\ker\omega_{BP}$ the derived subgroup.
\end{coro}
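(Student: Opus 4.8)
The plan is to produce an isomorphism of orbifolds $\mathbf{BP}\cong\check{BP}_3$ and then to read off the four asserted consequences, each of which becomes formal. First I would note that the hyperbolic bipyramid $BP_H$ of Proposition~\ref{prop:hbp} is, as a polytope, combinatorially a triangular bipyramid: its six facets are the six planes $\widetilde{\Pi}$ appearing in the proof of Proposition~\ref{prop:hbp}, its two simple vertices are $A$ and $E$, and its three vertices lying in four facets are the ideal points $B,C,D$, matching $BP_3$ exactly. Since $\mathbb{H}^3/G_{BP}$ contains no ideal point, the underlying space of $\mathbf{BP}$ is $BP_H$ with $B,C,D$ removed, and I would fix a face-structure-preserving homeomorphism of it onto $BP_3$ with its three $4$-vertices removed, i.e.\ onto $\check{BP}_3$, labelling the facets of $BP_H$ as $\widetilde{\Pi}_1,\dots,\widetilde{\Pi}_6$ so that $\widetilde{\Pi}_i\leftrightarrow\{x_i=0\}$ and so that two facets of $BP_H$ share an edge exactly when the corresponding coordinate hyperplanes meet $\check{BP}_3$.

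Next I would check that, under this homeomorphism, the two orbifold structures coincide. Both are the right-angled Coxeter structure carried by the space: every facet is a mirror; two mirrors meet along a reflector edge with local group $(\mathbb{Z}/2)^2$ precisely when the corresponding facets share an edge of the bipyramid; and there is no stratum of larger isotropy, because the $4$-vertices have been removed. For $\check{BP}_3$ this is exactly the orbifold described just before the statement (the quotient of $\check{Z}(BP_3)$ by the coordinate reflections of $\R^6$, with the presentation of $\pi_1^{\text{orb}}(\check{BP}_3)$ recalled there). For $\mathbf{BP}$ it follows from Proposition~\ref{prop:hbp}: the six facet reflections generate $G_{BP}$, two of them commute when the facets share an edge because the dihedral angle there is $\tfrac{\pi}{2}$, and since $BP_H$ is a fundamental domain for the tessellation $\mathfrak{T}_{BP}$ these relations (with $r_i^2=1$) present $G_{BP}$; hence along each edge the local group is $(\mathbb{Z}/2)^2$ and $\mathbb{H}^3/G_{BP}$ has no worse isotropy once the ideal vertices are deleted. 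Thus the combinatorial orbifold data agree and $\mathbf{BP}\cong\check{BP}_3$ as orbifolds, and transporting the hyperbolic metric of $\mathbf{BP}$ across this isomorphism gives $\check{BP}_3$ a complete hyperbolic structure.

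The remaining assertions are then immediate. As $\mathbb{H}^3$ is the simply connected orbifold cover of $\mathbf{BP}$ with deck group $G_{BP}$, the orbifold isomorphism gives $\pi_1^{\text{orb}}(\check{BP}_3)\cong G_{BP}$. By construction $\check{Z}(BP_3)$ is the preimage of $\check{BP}_3$ under $Z(BP_3)\to BP_3$, hence the orbifold cover of $\check{BP}_3$ associated with the epimorphism $\pi_1^{\text{orb}}(\check{BP}_3)\twoheadrightarrow(\mathbb{Z}/2)^6$ sending the $i$-th mirror reflection to $e_i$; under the identification with $G_{BP}$ this epimorphism is $\omega_{BP}$, so $\pi_1(\check{Z}(BP_3))=\ker\omega_{BP}$, which is the derived subgroup of $G_{BP}$ since $\omega_{BP}$ is the abelianization of the right-angled Coxeter group $G_{BP}$. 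This cover is a genuine manifold because over $\check{BP}_3$ every point lies in at most three mirrors, and the corresponding $(\mathbb{Z}/2)^{\le3}\hookrightarrow(\mathbb{Z}/2)^6$ acts freely on the fibre; so $\check{Z}(BP_3)$ inherits the complete hyperbolic structure. Finally, each removed $4$-vertex of $BP_3$ pulls back to an orbit of size $2^6/2^4=2^2$ in $Z(BP_3)$, and each such point has a deleted neighbourhood $\cong(\mathbb{S}^1\times\mathbb{S}^1)\times(0,1)$ (the open cone on the torus minus its apex, by the local model recalled before the statement), so $\check{Z}(BP_3)$ has $3\times 2^2=12$ ends of torus type; in the hyperbolic metric these are the cusps coming from the Euclidean square links of the ideal vertices $B,C,D$ of $BP_H$.

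The step I expect to be the genuine, if modest, obstacle is the bookkeeping in the first two paragraphs: choosing the labelling so that ``share an edge'' matches on the two sides (note in particular that two facets of $BP_3$ may meet only at a $4$-vertex, and the corresponding hyperplanes of $BP_H$ are then asymptotic at an ideal point, carrying no commuting relation), and confirming carefully that deleting the three $4$-vertices removes all bad orbifold loci on both sides. Once this is in place, the isomorphism of Coxeter orbifolds, the metric transport, and all the group-theoretic consequences follow at once.
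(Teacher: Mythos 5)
Your proposal is correct and takes essentially the same route the paper intends: the paper states the corollary is ``straightforward from Proposition~\ref{prop:hbp}'', relying on the preceding setup (the right-angled reflection group $G_{BP}$ with fundamental domain $BP_H$, the epimorphism $\omega_{BP}$, the mirror orbifold structure on the polytope, and the general fact that $\pi_1(Z)$ is the kernel of $\pi_1^{\text{orb}}(P)\twoheadrightarrow(\mathbb{Z}/2)^n$), which is exactly what you spell out, including the key bookkeeping that non-adjacent facets meet only at the deleted $4$-vertices and hence carry no commuting relation. One minor wording slip: after removing the $4$-vertices there remain strata of isotropy $(\mathbb{Z}/2)^3$ at the two simple vertices $A$ and $E$ (three mutually orthogonal mirrors on both sides), so ``no stratum of larger isotropy'' is literally false as stated, but since these corners match in the two structures and your third paragraph handles points on up to three mirrors correctly, the argument is unaffected.
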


Let us compute some of these fundamental groups. Note that  $\check{Z}(BP_{3})$ coincides with the $3$-torus $Z(C)$ minus the $12$~circles coming from three pairwise disjoint edges one in each direction.

\begin{proposition}\label{prop:group_bipiramide}
\mbox{}
\begin{enumerate}[label=\rm(\alph{enumi})]
\item\label{Ga} $\pi_1(Z(BP_{3}))=1$,
\item\label{Gb} $\pi_1(Z(C))=\mathbb{Z}^3$,
\item\label{Gc} $\pi_1(Z(T_2))=\mathbb{F}_5$, the free group with $5$~generators,
\item\label{Gd} $\pi_1(\check{Z}(BP_{3}))$ is the kernel of the map $G\to(\mathbb{Z}/2)^3$,
\[
G:=\langle x, y, z\mid [x,[y,z]]=[y,[z,x]]=[z,[x,y]]=1\rangle,
\]
where the map sends the generators to generators; the abelianized of this group is $\mathbb{Z}^{12}$.
Actually $\check{Z}(BP_{3})$ is a $(\mathbb{Z}/2)^3$-Galois cover
of the complement of three pairwise disjoint factors
of $\mathbb{T}^3$.
\end{enumerate}
\end{proposition}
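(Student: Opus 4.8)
\emph{Parts \ref{Ga}--\ref{Gc}.} For \ref{Ga}, a routine check shows that every pair of the six (triangular) facets of $BP_3$ has non-empty intersection: each pair shares either an edge of $BP_3$ or an equatorial vertex. Hence, in the presentation of $\pi_1^{\mathrm{orb}}(BP_3)$ recalled above, all generators commute, so $\pi_1^{\mathrm{orb}}(BP_3)\cong(\mathbb{Z}/2)^6$, and $\pi_1(Z(BP_3))$, being the kernel of the identity $(\mathbb{Z}/2)^6\to(\mathbb{Z}/2)^6$, is trivial. Part \ref{Gb} is immediate from $Z(C)=\mathbb{T}^3$, established above. For \ref{Gc}, we showed $Z(T_2)\cong 5\#(\mathbb{S}^2\times\mathbb{S}^1)$; since $\pi_1(\mathbb{S}^2\times\mathbb{S}^1)=\mathbb{Z}$ and connected sum of $3$-manifolds realises free products of fundamental groups, van Kampen gives $\pi_1(Z(T_2))=\mathbb{Z}^{*5}=\mathbb{F}_5$.

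\emph{Set-up for \ref{Gd}.} Recall that $\check Z(BP_3)$ is $Z(C)=\mathbb{T}^3$ with the $12$ circles deleted that arise, under the reflections, from the three red edges of the $C$-smoothing; in three groups of four these are parallel fibres of $\mathbb{T}^3$, one group per coordinate direction. Write $\mathbb{T}^3=(\R/\mathbb{Z})^3$ and let $(\mathbb{Z}/2)^3$ act freely by the half-period translations $u_i\mapsto u_i+\tfrac12$; the quotient is again a $3$-torus, and the $12$ circles descend to three pairwise disjoint coordinate circles $\ell_1,\ell_2,\ell_3$, one per direction, each lifting to its four parallel translates. This proves the last assertion of \ref{Gd}: $\check Z(BP_3)$ is a free $(\mathbb{Z}/2)^3$-cover of $\mathbb{T}^3\setminus(\ell_1\cup\ell_2\cup\ell_3)$, namely the one classified by the composite $\pi_1(\mathbb{T}^3\setminus(\ell_1\cup\ell_2\cup\ell_3))\to\pi_1(\mathbb{T}^3)\to(\mathbb{Z}/2)^3$.

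\emph{The group $G$ and the conclusion of \ref{Gd}.} Put $G:=\pi_1(\mathbb{T}^3\setminus(\ell_1\cup\ell_2\cup\ell_3))$ and let $x,y,z$ be the three coordinate loops based off the link. Removing the $\ell_i$ one at a time and using the splittings $\mathbb{T}^3\setminus\ell_i\cong(\mathbb{T}^2\setminus\{*\})\times\mathbb{S}^1$ (van Kampen, or a graph-of-groups argument), one obtains: $x,y,z$ generate $G$; a meridian of $\ell_i$ is freely homotopic to the commutator of the two coordinate loops transverse to $\ell_i$, since it bounds a once-punctured coordinate $2$-torus; and the only relations, coming from re-gluing the solid-torus neighbourhoods, say that each such meridian commutes with the corresponding longitude, which (for suitable connecting arcs) is the coordinate loop along $\ell_i$. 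This yields exactly $G=\langle x,y,z\mid [x,[y,z]]=[y,[z,x]]=[z,[x,y]]=1\rangle$. (Equivalently, $\mathbb{T}^3\setminus(\ell_1\cup\ell_2\cup\ell_3)$ is the index-$8$ cover of the orbifold $\check{BP}_3$ corresponding to $\ker(W\to(\mathbb{Z}/2)^3)$, where $W=\pi_1^{\mathrm{orb}}(\check{BP}_3)$ is the right-angled Coxeter group on the triangular-prism graph of edge-adjacent facets and $(\mathbb{Z}/2)^3$ identifies each facet with the opposite one; this presentation can also be obtained by Reidemeister--Schreier.) By the previous paragraph, $\pi_1(\check Z(BP_3))$ is then the kernel of the map $G\to(\mathbb{Z}/2)^3$ that is the composite $G\to\pi_1(\mathbb{T}^3)\to(\mathbb{Z}/2)^3$, which visibly sends each of $x,y,z$ to a generator, as claimed. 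Finally, $H_1(\check Z(BP_3))=H_1(\mathbb{T}^3\setminus L)$ with $L$ the $12$-component link; a Mayer--Vietoris computation for $\mathbb{T}^3=(\mathbb{T}^3\setminus L)\cup N(L)$---in which the three generators of $H_2(\mathbb{T}^3)$ map to the sums of the four meridians in their respective directions, a direct summand of $\mathbb{Z}^{12}$---gives $H_1(\check Z(BP_3))\cong\mathbb{Z}^{12}$.

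\emph{The main obstacle} is the explicit presentation of $G$: one must keep careful track of basepoints, orientations, and which commutator of coordinate loops represents which meridian, so that the three relations come out precisely as $[x,[y,z]]=[y,[z,x]]=[z,[x,y]]=1$ rather than as conjugated or inverted variants; the Reidemeister--Schreier route is mechanical but lengthy.
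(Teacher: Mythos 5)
Your proposal is correct and follows essentially the same route as the paper: (a) via the observation that all six facets of $BP_3$ pairwise intersect, so $\pi_1^{\text{orb}}(BP_3)\cong(\mathbb{Z}/2)^6$; (b) and (c) directly from the earlier identifications $Z(C)\cong\mathbb{T}^3$ and $Z(T_2)\cong 5\#(\mathbb{S}^2\times\mathbb{S}^1)$; and (d) by exhibiting $\check{Z}(BP_3)$ as the $(\mathbb{Z}/2)^3$-Galois cover of the complement of three pairwise disjoint coordinate circles in $\mathbb{T}^3$, whose fundamental group is $G$. The only difference is that you supply details the paper leaves implicit (the half-period translation deck group, the sketch of the presentation of $G$, and the Mayer--Vietoris computation of $H_1\cong\mathbb{Z}^{12}$), all of which are consistent with the paper's assertions.
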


\begin{proof}
The cases \ref{Gb} and \ref{Gc} are trivial. For the case \ref{Ga},
since all the facets of $BP_3$ intersect, then $\pi_1^{\text{orb}}(BP_3)\cong(\mathbb{Z}/2)^6$ and the kernel is trivial.

For \ref{Gd}, we may use $\pi_1^{\text{orb}}(\check{BP_3})$, but we are going to
use an alternative construction: $\check{Z}(BP_3)$ is
a $(\mathbb{Z}/2)^3$-Galois cover of the complement of three pairwise disjoint factors
of $\mathbb{T}^3$, whose fundamental group is~$G$.
\end{proof}

We finish the discussion
with the notion of \emph{small cover}, see~\cite{DJ1991}, i.e., the \emph{smallest} cover of $\check{BP}_3$ which
is a manifold (in this case a complete hyperbolic manifold). This small cover is obtained from an epimorphism
$\pi_1^{\text{orb}}(BP_3)\to(\mathbb{Z}/2)^2$, sending a generator of associated to a facets to a non-trivial
element of $(\mathbb{Z}/2)^2$, such that the images of generators associated to intersecting faces are distinct.
This small cover has $3$ ends of torus type. It is not hard to check that the fundamental
group of this small cover is isomorphic to
\[
\langle a_1,\dots,a_4\mid
a_1^2=[a_2, a_1\cdot a_3]=[a_3, a_1\cdot a_4]=[a_4, a_1\cdot a_2]=1
\rangle.
\]
 \section{The octahedron variety}
\label{sec:octahedron}

The octahedron is a non-simple polyhedron with only $4$-vertices. The study
of its smoothings is richer than in the previous examples and it deserves its own section.
As for the other ones, there are models of this polyhedron in $\R^8$ where the faces are the intersection
with the coordinate hyperplanes and, in particular, the faces are pairwise orthogonal.
In the following section we give another presentation of an octahedron with orthogonal
faces in the hyperbolic space.

\subsection{The octahedron \texorpdfstring{$P_O$}{P0}}
\mbox{}

We can construct a geometric embedding $P_O\subset\R^8$ of the  octahedron  given by the equations
$r_i\geq 0$, $\sum_{i=1}^8 r_i =1$, and
\begin{align*}
r_{1} - r_{5} - r_{6} - r_{7} + 2 r_{8} &=0 &
r_{2} - r_{5} - r_{6} + r_{8} &=0\\
r_{3} - r_{5} - r_{7} + r_{8}&=0 &
r_{4} - r_{6} - r_{7} + r_{8}&=0,
\end{align*}
hence
\[
\multiconj =
\left\{
\begin{pmatrix}
 1 \\
 0 \\
 0 \\
 0
\end{pmatrix},
\begin{pmatrix}
 0 \\
 1 \\
 0 \\
 0
\end{pmatrix},
\begin{pmatrix}
 0 \\
 0 \\
 1 \\
 0
\end{pmatrix},
\begin{pmatrix}
 0 \\
 0 \\
 0 \\
 1
\end{pmatrix},
\begin{pmatrix}
 -1 \\
 -1 \\
 -1 \\
 0
\end{pmatrix},
\begin{pmatrix}
 -1 \\
 -1 \\
 0 \\
 -1
\end{pmatrix},
\begin{pmatrix}
 -1 \\
 0 \\
 -1 \\
 -1
\end{pmatrix},
\begin{pmatrix}
 2 \\
 1 \\
 1 \\
 1
 \end{pmatrix}
 \right\}.
\]
We can construct the intersection of ellipsoids $Z(P_O)$ which is a $3$-dimensional variety with singularities. In the sequel we study
the smoothings of this singular variety. We are also interested in the smooth part $\check{Z}(P_O)$, which is topologically obtained as
the union of the reflections of $\check{P}_O$, the octahedron minus its six vertices. Since each vertex is disjoint to $4$~facets
$Z(P_O)\setminus\check{Z}(P_O)$ consists of $96=2^4\times 6$ points; all the smoothings will admit a submanifold homeomorphic to
$\check{Z}(P_O)$ which will be the complement of a link with $96$ components, and we  connect the different smoothings
by $0$-surgeries of sub-links with $16$ components.

\subsection{Hyperbolic geometry of the octahedron orbifold}
\mbox{}

As in \S\ref{subsec:hbp}, there exists a hyperbolic ideal octahedron of finite volume with $\frac{\pi}{2}$ dihedral angles. In fact, it is known, \cite[pg. 217]{vinberg-sh:93}, that the only regular polyhedra with a hyperbolic structure with $\frac{\pi}{2}$ dihedral angles are the dodecahedron and the octahedron. The dodecahedron is a bounded hyperbolic polyhedron and the octahedron is ideal with finite volume. The hyperbolic dodecahedron was used in \cite{ALdML:16} to study a smooth intersection of quadrics but the octahedron correspond to a singular intersection of ellipsoids.
The next result is a explicit construction of the hyperbolic structure in the octahedron.

\begin{proposition}\label{prop:oh}
There is a regular octahedron $O_H$ in the three-dimensional hyperbolic space $\mathbb{H}^3$ where all the vertices are ideal, hence,
$O_H$ is homeomorphic to $\check{P}_O$.
\end{proposition}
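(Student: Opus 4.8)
The plan is to reproduce, for the octahedron, the explicit Klein-model construction carried out for the triangular bipyramid in \S\ref{subsec:hbp}. Working in the projective (Klein) model $\mathbb{H}^3=\mathbb{B}^3$, in which totally geodesic planes are exactly the intersections with $\mathbb{B}^3$ of affine Euclidean planes, I would take as vertices the six points $(\pm1,0,0)$, $(0,\pm1,0)$, $(0,0,\pm1)$ on $\partial\overline{\mathbb{B}}^3$, i.e. the vertices of the Euclidean regular octahedron $\{|x|+|y|+|z|\le 1\}$ inscribed in the unit sphere. Its eight faces lie on the planes $\widetilde{\Pi}_{\varepsilon}\colon \varepsilon_1 x+\varepsilon_2 y+\varepsilon_3 z=1$ for $\varepsilon=(\varepsilon_1,\varepsilon_2,\varepsilon_3)\in\{\pm1\}^3$, each containing one vertex on each coordinate axis. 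Since the origin lies strictly inside all eight half-spaces $\varepsilon_1 x+\varepsilon_2 y+\varepsilon_3 z<1$, each $\widetilde{\Pi}_\varepsilon\cap\mathbb{B}^3$ is a nonempty totally geodesic hyperbolic plane — an ideal triangle — and I would define $O_H$ as the intersection of the corresponding eight closed hyperbolic half-spaces; because the Klein model carries geodesics to straight segments, $O_H$ is hyperbolically convex and is the hyperbolic realization of the combinatorial octahedron.

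The second step is the elementary remark that identifies $O_H$ topologically. From $x^2+y^2+z^2\le(|x|+|y|+|z|)^2\le 1$ on the octahedron, equality in $x^2+y^2+z^2\le 1$ forces all but one coordinate to vanish and the survivor to be $\pm1$; hence the closed Euclidean octahedron meets $\partial\overline{\mathbb{B}}^3$ in precisely those six points and is otherwise contained in $\mathbb{B}^3$. Therefore, as a subset of $\mathbb{H}^3$, $O_H$ is the Euclidean octahedron with its six vertices deleted, so it is homeomorphic to $\check{P}_O$ and its six vertices are ideal points of $\mathbb{H}^3$; this yields the last assertion of the statement. Regularity then follows for free: the group of orthogonal linear maps of $\R^3$ preserving the six vertices is the full octahedral group of order $48$, it acts on $\mathbb{B}^3$ by hyperbolic isometries, and it acts transitively on flags of $O_H$.

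Finally, although the statement only asserts existence of a regular ideal octahedron, I would also record the dihedral angles, since this is what makes $O_H$ comparable to $BP_H$ and matches the fact quoted from \cite{vinberg-sh:93}. Reusing the angle computation from the proof of Proposition~\ref{prop:hbp} — in the non-conformal Klein model the dihedral angle along an edge joining two ideal vertices equals the Euclidean angle between the boundary circles $\widetilde{\Pi}_\varepsilon\cap\partial\overline{\mathbb{B}}^3$ at a shared endpoint, and a tangent vector to such a circle at a boundary point $p$ is the cross product of the Euclidean normal of the plane with the radial direction at $p$ — one finds, for the edge from $(1,0,0)$ to $(0,1,0)$ with incident faces $x+y+z=1$ and $x+y-z=1$, the tangent vectors $(1,1,1)\times(1,0,0)=(0,1,-1)$ and $(1,1,-1)\times(1,0,0)=(0,-1,-1)$, whose inner product vanishes; by transitivity of the octahedral group on the twelve edges, every dihedral angle is $\tfrac{\pi}{2}$. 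Alternatively, existence can be quoted from Andreev's theorem \cite{and1970b} or from \cite{vinberg-sh:93}, but the explicit model is what we want. The only point that needs genuine care is the same subtlety as in \S\ref{subsec:hbp}: the angle must be read off from the boundary circles at the common ideal vertex, not from the Euclidean dihedral angle of the two affine planes, which the Klein model distorts.
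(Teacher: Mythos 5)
Your proposal is correct and follows essentially the same route as the paper: the explicit Klein-model octahedron with vertices $(\pm1,0,0)$, $(0,\pm1,0)$, $(0,0,\pm1)$, faces on the planes $\varepsilon_1x+\varepsilon_2y+\varepsilon_3z=1$, and the same cross-product computation of tangent vectors to the boundary circles to get the right dihedral angles. The only difference is the logical direction: the paper puts the octahedron in a ball of radius $R$ and solves the right-angle condition to get $R=1$, hence ideal vertices, while you fix the unit ball, place the vertices on the sphere at infinity, and then verify convexity, regularity via the order-$48$ symmetry group, the homeomorphism with $\check{P}_O$, and the angles --- the same construction, argued in reverse.
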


\begin{proof}
Consider the regular octahedron $P_O$ with vertices $(\pm 1,0,0)$, $(0,\pm 1,0)$, $(0,0,\pm 1)$ sitting in  a ball $\mathbb{B}_R^3$ of radius $R$ centered at the origin. This octahedron is symmetric respect to the coordinate planes. The interior of the ball $\mathbb{B}^3_R$ is taken as the Klein model, $\mathbb{H}^3$, for the hyperbolic space. We want to choose
$R$ such that  every dihedral angle defined
by two adjacent faces in $P_O$ is $\frac{\pi}{2}$. The dihedral angle between two intersecting planes in the Klein model is the same as the Euclidean angle between
their bounding circles in~$\partial \mathbb{B}^3_R$.

\begin{figure}[ht]
\begin{center}
\begin{tikzpicture}[line join=bevel,z=-5.5,scale=2]
\coordinate (A1) at (0,0,-1);
\coordinate (A2) at (-1,0,0);
\coordinate (A3) at (0,0,1);
\coordinate (A4) at (1,0,0);
\coordinate (B1) at (0,1,0);
\coordinate (C1) at (0,-1,0);

\draw[dashed] (A1) node[above] {$D$} -- (A2)  node[below left] {$(0,-1,0)$} node[above left] {$E$} -- (B1)
 node[above left] {$(0,0,1)$} node[above right] {$A$}-- cycle (A4)  node[below right] {$(1,0,0)$} node[above right] {$B$} -- (A1) --  (C1)  node[below left] {$(0,0,-1)$} node[below right] {$F$};

\draw [line width= 1, fill opacity=0.7,fill=yellow!30!white] (A3) -- (A4) -- (B1) -- cycle;
\draw [line width= 1, fill opacity=0.7,fill=yellow!30!white] (A2) -- (A3) -- (C1) -- cycle;
\draw [line width= 1, fill opacity=0.7,fill=yellow!30!white] (A3) -- (A4) -- (C1) -- cycle;
\draw [line width= 1, fill opacity=0.7,fill=yellow!30!white] (A2) -- (A3)  node[below left] {$(0,1,0)$} node[above left] {$C$} -- (B1) -- cycle;

\draw[gray] ($1.2*(A2)-.2*(A4)$) -- ($1.2*(A4)-.2*(A2)$);
\draw[gray] ($1.2*(B1)-.2*(C1)$) -- ($1.2*(C1)-.2*(B1)$);
\draw[gray] ($1.4*(A1)-.4*(A3)$) -- ($1.4*(A3)-.4*(A1)$);

\node at (-3,1) {Face $1$: $ABC$};
\node at (-3,.75) {Face $2$: $ABD$};
\node at (-3,.5) {Face $3$: $ADE$};
\node at (-3,.25) {Face $4$: $ACE$};
\node at (-3,0) {Face $5$: $BDF$};
\node at (-3,-.25) {Face $6$: $BCF$};
\node at (-3,-.5) {Face $7$: $CEF$};
\node at (-3,-.75) {Face $8$: $DEF$};
\end{tikzpicture}
 \caption{Octahedron}
\label{f1}
\end{center}
\end{figure}

Let us compute the dihedral angle between the face $ABC$ and the face $ACE$. The vectors $(1,1,1)$ and $(1,-1,1)$ are respectively perpendicular to the planes $ABC$ and $ACE$. The bounding circles $ABC\cap \partial \mathbb{B}^3_R$ and $ACE\cap \partial \mathbb{B}^3_R$ intersects in the point $(x_0,0,z_0)$, where $x_0+z_0=1$ and $x_0^2+z_0^2=R^2$. The vector $(x_0,0,z_0)$ is  perpendicular to the sphere $\partial \mathbb{B}^3_R$ at $(x_0,0,z_0)$, so that
\(
(x_0,0,z_0)\times (1,1,1)= (-z_0, z_0-x_0,x_0)
\)
is tangent to the bounding circle $ABC\cap \partial \mathbb{B}^3_R$. Similarly, the vector
\(
(x_0,0,z_0)\times (1,-1,1)= (z_0, z_0-x_0,-x_0)
\)
is tangent to the bounding circle $ACE\cap \partial \mathbb{B}^3_R$.
Thus, $R$ should be chosen so that
\begin{align*}
  \langle(-z_0, z_0-x_0,x_0), (z_0, z_0-x_0,-x_0)\rangle = -2x_0 z_0&=0 \\
  x_0+z_0&=1 \\
  x_0^2+z_0^2 &= R^2.
\end{align*}
The first two equations have two solutions for $(x_0,z_0)$, $\{(0,1),(1,0)\}$, the two intersecting points of the bounding circles. In both cases, the third equation implies $R=1$.
Hence we have proved that the hyperbolic regular octahedron with right dihedral angles has ideal vertices and it is inscribed in $\partial \mathbb{B}^3_1$.
This regular octahedron $O_H$ is homeomorphic to $\check{P}_O$.
\end{proof}

The faces of $O_H$ are contained in hyperbolic planes and the edges are geodesics. In fact, the group $G_O$ generated by reflection on the eight planes containing the faces acts on $\mathbb{H}^3$ and it produces a regular tessellation $\mathfrak{T_{P_H}}$ made up by regular octahedra. The reflections corresponding to facets sharing an edge commute as they are orthogonal.

\subsection{Groups and octahedron}
\mbox{}

The $2$-skeleton of the tessellation by octahedra is formed by hyperbolic planes tessellated by ideal triangles. The $1$-skeleton is formed by geodesics and all the vertices are ideal points. The quotient $\mathbb{H}^3/G_O$ defines a hyperbolic orbifold structure $\mathbf{O}$ in $O_H$.
There is a natural epimorphism $\omega_H:G_O\twoheadrightarrow(\mathbb{Z}/2)^8$. On the other side,
$\check{P}_O$ has a natural orbifold structure from the quotient $\rho_O:\check{Z}(P_O)\to\check{P}_O$, which is associated to the monodromy
$\omega_O:\pi_1^{\text{\rm orb}}(\check{P}_O)\to(\mathbb{Z}/2)^8$ of $\rho_O$.
The following result comes from Proposition~\ref{prop:oh}.

\begin{coro}
The two orbifold structures over $O_H$ and $\check{P}_O$ are isomorphic. In particular,  $\check{Z}(P_O)$ admits a complete hyperbolic geometric structure,
$G\cong\pi_1^{\text{\rm orb}}(\check{P}_O)$, and $\check{Z}(P_O)=\ker\omega$ the derived subgroup.

The group $\pi_1^{\text{\rm orb}}(\check{P}_O)$ has $8$ generators $x_1,\dots,x_8$, labelled as the facets in Figure{\rm~\ref{f1}} with the relations
$x_i^2=1$, $i=1,\dots,8$, and
\begin{equation}\label{eq:group}
\begin{aligned}
{[x_i, x_j]} = 1,\ (i, j)\in \{&(1,2),(2,3),(3,4),(4,1),(5,6),(6,7),\\
&(7,8),(8,5),(1,6),(2,5),(3,8),(4,7)\}
\end{aligned}
\end{equation}
corresponding to the pairs of facets having an edge in common.
The group $\pi_1(\check{Z}(P_O))$ is the derived subgroup of $\pi_1^{\text{\rm orb}}(\check{P}_O)$ and its abelianization is $\mathbb{Z}^{100}$.
\end{coro}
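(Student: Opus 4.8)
The plan is to split the statement into (a) the hyperbolic/group assertions, which are a formal consequence of Proposition~\ref{prop:oh}, and (b) the computation of the abelianization, which is the only part that needs real work.

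For (a): Proposition~\ref{prop:oh} gives the ideal octahedron $O_H\subset\mathbb{H}^3$ with all dihedral angles $\frac{\pi}{2}$. Since every dihedral angle is of the form $\pi/k$ (here $k=2$), Poincar\'e's polyhedron theorem (the standard theory of hyperbolic reflection groups) applies: the group $G_O$ generated by the reflections in the eight face-planes is discrete, acts on $\mathbb{H}^3$ with fundamental domain $O_H$, and realizes $O_H$ with all faces as mirrors as the quotient orbifold. Transporting through the face-preserving homeomorphism $O_H\cong\check P_O$ of Proposition~\ref{prop:oh}, this is exactly the natural mirror orbifold on $\check P_O$ coming from $\rho_O$; hence the two orbifolds are isomorphic and $G_O\cong\pi_1^{\mathrm{orb}}(\check P_O)$. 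By definition $\check Z(P_O)$ is the cover of this orbifold associated with the monodromy $\omega_O\colon\pi_1^{\mathrm{orb}}(\check P_O)\twoheadrightarrow(\mathbb{Z}/2)^8$, so it inherits a complete hyperbolic metric (of finite volume, with $96=6\cdot 2^4$ torus cusps), and $\pi_1(\check Z(P_O))=\ker\omega_O$. A commutator of two reflections preserves orientation, so $\ker\omega_O\subset\mathrm{Isom}^+(\mathbb{H}^3)$ and $\check Z(P_O)$ is orientable; also, $\mathbb{H}^3$ being contractible and $\ker\omega_O$ acting freely, $\check Z(P_O)$ is aspherical, so $H_1(\check Z(P_O))=\pi_1(\check Z(P_O))^{\mathrm{ab}}$.

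For the presentation I would invoke the formula for $\pi_1^{\mathrm{orb}}$ of a mirrored polytope with its non-simple vertices deleted, recalled earlier in the paper: the generators are the eight facets, with $[x_i,x_j]=1$ exactly when $\check P_O\cap\{x_i=x_j=0\}\neq\emptyset$, i.e.\ exactly when facets $i$ and $j$ share an edge (two facets meeting only at one of the six deleted vertices contribute no relation). Inspecting Figure~\ref{f1}, the twelve edge-adjacent pairs of facets are precisely the twelve pairs in~\eqref{eq:group}; equivalently $W:=\pi_1^{\mathrm{orb}}(\check P_O)$ is the right-angled Coxeter group on the $1$-skeleton of the cube (the face-adjacency graph of the octahedron). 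Abelianizing $W$ kills all commutators and leaves $(\mathbb{Z}/2)^8$ with $x_i\mapsto e_i$, which is exactly $\omega_O$; hence $\pi_1(\check Z(P_O))=\ker\omega_O=W'$ is the derived subgroup and $W'^{\,\mathrm{ab}}=H_1(\check Z(P_O))$.

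It remains to show $H_1(\check Z(P_O);\mathbb{Z})\cong\mathbb{Z}^{100}$. The solid octahedron with its six vertices deleted has only simple faces, so $\check Z(P_O)$ is the basic construction $\mathcal U\bigl((\mathbb{Z}/2)^8,\check P_O\bigr)$ ($2^8$ reflected copies of $\check P_O$ glued along facets), a manifold whose integral homology admits the standard Davis-type splitting
\[
H_*(\check Z(P_O))\;\cong\;\bigoplus_{T\subseteq[8]}H_*\bigl(\check P_O,\,(\check P_O)_T\bigr),\qquad (\check P_O)_T:=\bigcup_{i\in T}F_i\cap\check P_O .
\]
A collar retraction shows $\check P_O$ is contractible (it deformation retracts onto the solid truncated octahedron), and the nerve lemma — the punctured facets $F_i\cap\check P_O$ and their pairwise intersections are contractible, the triple intersections empty — gives $(\check P_O)_T\simeq G_T$, the subgraph of the cube graph induced by $T$. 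The long exact sequence of the pair then yields $H_1(\check P_O,(\check P_O)_T)\cong\mathbb{Z}^{\,c(T)-1}$ for $T\neq\emptyset$ and $0$ for $T=\emptyset$, where $c(T)$ is the number of connected components of $G_T$; in particular $H_1(\check Z(P_O))$ is free of rank
\[
\sum_{\emptyset\neq T\subseteq[8]}\bigl(c(T)-1\bigr)\;=\;\Bigl(\sum_{T\subseteq[8]}c(T)\Bigr)-255 .
\]
Using the identity $\sum_{T}c(\Gamma[T])=\sum_{S\ \mathrm{connected}}2^{\,|V(\Gamma)|-|S|-|\partial S|}$, the last task is to enumerate the connected induced subgraphs $S$ of the cube graph with their external boundary sizes $|\partial S|$; organized by $|S|=1,\dots,8$ and using the order-$48$ automorphism group of the cube — the only subtlety being the three shapes ($C_4$, $P_4$, $K_{1,3}$) of connected $4$-vertex induced subgraph — this sum evaluates to $355$, whence $\operatorname{rank}H_1(\check Z(P_O))=355-255=100$. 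The main obstacle is exactly this enumeration (elementary but error-prone); as an independent cross-check, one also reaches rank $100$ from the $96$ torus cusps, since ``half lives, half dies'' keeps $\tfrac12\cdot 2\cdot 96=96$ of the boundary classes, and a Mayer–Vietoris comparison with $Z(P_O)=Z_0\cup(\text{$96$ cones on }T^2)$ supplies the remaining $\operatorname{rank}\pi_1(Z(P_O))=4$ from Proposition~\ref{prop:groups-octa}.
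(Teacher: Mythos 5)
Your proposal is correct, and its first two parts coincide with the paper's route: the orbifold isomorphism and the complete hyperbolic structure are deduced from Proposition~\ref{prop:oh} together with the reflection-group/mirror-orbifold discussion, and the presentation of $\pi_1^{\text{orb}}(\check{P}_O)$ (a right-angled Coxeter group on the face-adjacency graph of the octahedron, i.e.\ the $1$-skeleton of the cube) is exactly the general formula recalled before Proposition~\ref{prop:group_bipiramide}, with the twelve commuting pairs read off from Figure~\ref{f1}. Where you genuinely diverge is the computation of the abelianization of the derived subgroup: the paper obtains $\mathbb{Z}^{100}$ by a \texttt{Sagemath} computation (a presentation of the index-$2^8$ derived subgroup and its abelianization, documented in the linked repository), whereas you give a computer-free argument, viewing $\check{Z}(P_O)$ as the basic construction $\mathcal{U}\bigl((\mathbb{Z}/2)^8,\check{P}_O\bigr)$ and invoking the Davis/Hochster-type splitting $H_1\cong\bigoplus_{T}H_1(\check{P}_O,(\check{P}_O)_T)\cong\bigoplus_{\emptyset\neq T}\mathbb{Z}^{c(T)-1}$, valid here because $\check{P}_O$ and all nonempty intersections of punctured facets are contractible and triple intersections are empty, so $(\check{P}_O)_T$ has the homotopy type of the induced subgraph of the cube graph. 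Your enumeration checks out: the connected induced subgraphs contribute $128,48,48,46,48,28,8,1$ for $|S|=1,\dots,8$ (with the $|S|=4$ term split as $6$ four-cycles, $24$ induced paths and $8$ claws, the claws having boundary of size $3$), giving $\sum_T c(T)=355$ and rank $355-255=100$, in agreement with the paper; your argument also shows the group is torsion-free, slightly more than the paper states explicitly. The trade-off is clear: the paper's computation is shorter to state but relies on external code, while yours is self-contained, explains \emph{why} the answer is $100$ (it is a graph-counting invariant of the cube), and comes with the cusp-count/half-lives-half-dies consistency check against $\pi_1(Z(P_O))\cong\mathbb{Z}^4$ from Proposition~\ref{prop:groups-octa}; its only cost is the need to cite (or reprove via Mayer--Vietoris/transfer) the splitting theorem for basic constructions and to carry out the slightly delicate subgraph enumeration.
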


The assertion about $\pi_1(\check{Z}(P_O))$ follows from the general arguments in the proof of Proposition~\ref{prop:group_bipiramide}.
A presentation of $\pi_1(\check{Z}(P_O))$ and the rank of its abelianization can be found in \url{https://github.com/enriqueartal/SingularQuadricIntersections}.
There are several intermediate orbifold covers defined by epimorphisms $\tau:\pi_1(\check{Z}(P_O))\twoheadrightarrow(\mathbb{Z}/2)^m$, $1\leq m\leq 8$, between $\check{Z}(P_O)$ and $\check{P}_0$ which are interesting.

In order to check the \emph{orbifold points} of these covers we must study how they behave for the different isotropy groups:
\begin{enumerate}[label=\rm(I\arabic{enumi})]
\setcounter{enumi}{-1}
\item The isotropy group of a point $p$ in the interior of $P_O$ is trivial, $P_O$ is a manifold around~$p$ and no condition is needed for the covering to be a manifold around
its preimages.
\item\label{I1} The isotropy group of a point $p$ in the interior of a facet of $P_O$ is $\mathbb{Z}/2$ and is generated by some $x_i$. Hence the covering associated to $\tau$
is a manifold around
the preimages of~$p$ if $\tau(x_i)\neq 0$.
\item\label{I2} The isotropy group of a point $p$ in the interior of an edge of $P_O$ is $(\mathbb{Z}/2)^2$ and is generated by some $x_i, x_j$. Hence the covering associated to $\tau$
is a manifold around
the preimages of~$p$ if $\tau(\{x_i,x_j\})$ is a subgroup isomorphic to $(\mathbb{Z}/2)^2$.

\item There is no point whose isotropy group $(\mathbb{Z}/2)^3$ as there is no simple vertex.

\item Though there is no point whose isotropy group $(\mathbb{Z}/2)^4$, this is the case if we consider $P_O$ instead of $\check{P}_O$. In this case
$p$ is a vertex, $V$ is a closed regular neighborhood of $p$ in $P_O$, then the
orbifold fundamental group of the punctured neighbourhood $\check{V}:=V\setminus\{p\}$ is
\begin{gather*}
G_{i,j,k,l}:=\langle
x_{i}, x_{j}, x_{k},x_l;\, x_{i}^{2}, x_{j}^{2}, x_{k}^{2}, x_{l}^{2}, (x_{i}x_{j})^2,
(x_{j}x_{k})^2, (x_{k}x_{l})^2,(x_{l}x_{i})^2 \rangle,\\
(i,j,k,l)\in\{(1,2,3,4), (5,6,7,8), (1,2,5,6), (3,4,7,8), (1, 4, 6, 7), (2, 3, 5, 8)\}.
\end{gather*}
The derived subgroup is isomorphic $\mathbb{Z}^2$.
Let us study the connected components $\check{W}$ of the preimage of this neighbourhood in the cover induced by~$\tau$.

\begin{enumerate}[label=\rm(V\arabic{enumii})]
\item If $\tau(G_{i,j,k,l})$ is $\mathbb{Z}/2$ and the condition in \ref{I1} holds for $i,j,k,l$, then the boundary of $\check{W}$ is homeomorphic to
the orbifold $\mathbb{S}^2_{2222}$.

\item If $\tau(G_{i,j,k,l})$ is $(\mathbb{Z}/2)^2$ and the condition in \ref{I2} holds
for $(i,j)$, $(j,k)$, $(k,l)$, $(l,i)$, then the boundary of $\check{W}$ is homeomorphic to
a $2$-torus.

\item If $\tau(G_{i,j,k,l})$ is $(\mathbb{Z}/2)^4$, then the boundary of $\check{W}$ is homeomorphic to
a $2$-torus.
\end{enumerate}
\end{enumerate}

\begin{proposition}
Let $M_\tau$ be the total space of the orbifold cover associated to $\tau:\pi_1(\check{Z}(P_O))\twoheadrightarrow(\mathbb{Z}/2)^m$.

\begin{enumerate}[label=\rm(\alph{enumi})]
\item If $m=1$ and \ref{I1} holds, then $M_\tau$
is the \emph{double} of $P_O$ as a complete hyperbolic orbifold homeomorphic to the sphere $\mathbb{S}^3$ minus 6 points.

\begin{enumerate}[label=\rm(\roman{enumii})]
\item There are six cusps, coming from the vertices.
\item There are~$12$ geodesics coming from the
$1$-skeleton of the octahedron for which the angle around the edges is $\pi$.
\item The boundary of a neighbourhood of every cusp is the orbifold $\mathbb{S}^2_{2222}$, a two dimensional sphere with~$4$ cone points with isotropy the cyclic group of order~$2$.
\end{enumerate}

\item If $\tau$ is defined as
\[
\begin{tikzcd}[row sep=0pt]
\pi_1(\check{Z}(P_O))\rar&(\mathbb{Z}/2)^2\\
x_1,x_3,x_6,x_8\rar[mapsto]&e_1\\
x_2,x_4,x_5,x_7\rar[mapsto]&e_2,
\end{tikzcd}
\]
then $M_\tau$ is a complete hyperbolic manifold with $6$ ends which are of torus type
and $\pi_1(M_\tau)$ is isomorphic to
\[
\langle a_1,\dots,a_6\mid [a_1,a_2]=[a_3,a_4]=[a_5,a_4 a_1]=[a_6,a_2a_3]=1,
a_4 a_6 a_5 a_3 =a_5a_4a_3a_6
\rangle.
\]

\end{enumerate}
\end{proposition}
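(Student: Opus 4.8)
The plan is to read both parts off the covering-space dictionary set up just above the proposition, specializing the monodromy $\tau$ and feeding in the right-angled ideal hyperbolic octahedron $O_H$ of Proposition~\ref{prop:oh}. In each case $M_\tau$ is a cover of the complete finite-volume hyperbolic orbifold $\mathbf{O}=\mathbb{H}^3/G_O$, so the complete hyperbolic structure is automatic and only the underlying topology together with the singular locus and the cusp cross-sections need to be pinned down.

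For \textup{(a)}, the map sending every $x_i\mapsto 1\in\mathbb{Z}/2$ factors as $\omega_O$ followed by the ``sum of coordinates'' epimorphism $(\mathbb{Z}/2)^8\to\mathbb{Z}/2$, whose kernel is the orientation-preserving subgroup $G_O^+$ of index $2$ in $G_O$; hence $M_\tau=\mathbb{H}^3/G_O^+$ is the orientation double cover of $\mathbf{O}$, which is exactly the double of the solid octahedron $P_O$ obtained by gluing it to a mirror copy along its eight faces. Topologically this is the double of a $3$-ball, i.e.\ $\mathbb{S}^3$, with the six ideal vertices --- which sit on the doubling $2$-sphere --- removed, so the underlying space is $\mathbb{S}^3$ minus $6$ points. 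Along the $12$ edges condition~\ref{I2} fails, since $\tau$ sends both face-generators there to $1$: locally the transverse order-$2$ corner reflector $\mathbb{R}^2/D_2$ lifts to the cone of angle $\pi$, so each edge becomes a complete geodesic of cone angle $\pi$ joining two cusps, which is \textup{(ii)}. At each of the six ideal vertices the link is the Euclidean $2$-orbifold $\ast2222$ (a square with right-angled mirror sides); the vertex isotropy $G_{i,j,k,l}$ has $\tau$-image $\mathbb{Z}/2$, so it contributes a single cusp whose cross-section is the orientation double cover of $\ast2222$, namely $\mathbb{S}^2_{2222}$ --- this is \textup{(i)} and \textup{(iii)}.

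For \textup{(b)}, one first checks that the displayed $\tau$ satisfies~\ref{I1} (all eight images are nonzero) and~\ref{I2} (two face-generators sharing an edge always get distinct values), i.e.\ $\tau$ is a proper $2$-colouring of the face-adjacency graph of the octahedron, which is the $1$-skeleton of the cube and hence bipartite; by \ref{I1}--\ref{I2} the cover $M_\tau$ is then a manifold, and being a $4$-fold cover of $\mathbf{O}$ it is complete hyperbolic. At each vertex the four incident faces form a properly $2$-coloured $4$-cycle, so $\tau(G_{i,j,k,l})=(\mathbb{Z}/2)^2$ and case (V2) of the analysis above applies: the end is of torus type and there is exactly one per vertex, giving the $6$ ends of torus type. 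For $\pi_1$, note $\pi_1(M_\tau)=\ker\bigl(\pi_1^{\mathrm{orb}}(\check P_O)\xrightarrow{\tau}(\mathbb{Z}/2)^2\bigr)$ has index $4$; starting from the presentation of $\pi_1^{\mathrm{orb}}(\check P_O)$ with generators $x_1,\dots,x_8$, relators $x_i^2$, and the commutators of \eqref{eq:group}, one runs Reidemeister--Schreier with the Schreier transversal $\{1,x_1,x_2,x_1x_2\}$, uses the involution relations to eliminate the ``odd'' coset representatives, and Tietze-reduces to the displayed presentation on $6$ generators $a_1,\dots,a_6$ (one per cusp) with relations $[a_1,a_2]=[a_3,a_4]=[a_5,a_4a_1]=[a_6,a_2a_3]=1$ and $a_4a_6a_5a_3=a_5a_4a_3a_6$.

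The genuinely laborious step --- and the one I expect to be the main obstacle --- is this last simplification: following the $\le 32$ Reidemeister--Schreier generators and the rewritten relators, Tietze-reducing them to the $6$-generator, $5$-relator form, and matching the surviving generators to the six torus cusps so that the end count in the first half of \textup{(b)} is manifestly consistent with the presentation. This bookkeeping is best confirmed by a direct computer-algebra computation.
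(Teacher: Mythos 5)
Your overall strategy is the same one the paper leaves implicit: read everything off the isotropy analysis \ref{I1}, \ref{I2}, (V1)--(V3) applied to the right-angled ideal octahedron, and delegate the Reidemeister--Schreier/Tietze reduction of the index-$4$ subgroup to computer algebra (the paper itself obtains the presentation with \texttt{Sagemath}). Part (a) is correct: for $m=1$ condition \ref{I1} forces every $x_i\mapsto 1$, the kernel is the even-word (orientation-preserving) subgroup of $G_O$, and $M_\tau$ is the double of the mirrored octahedron, with the $12$ cone-angle-$\pi$ geodesics and the six $\mathbb{S}^2_{2222}$ cusp cross-sections exactly as you describe.

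The genuine gap is in your verification step for (b). You assert that the displayed $\tau$ satisfies \ref{I2}, i.e.\ that $\{x_1,x_3,x_6,x_8\}$ versus $\{x_2,x_4,x_5,x_7\}$ is a proper $2$-colouring of the face-adjacency graph. It is not: by \eqref{eq:group} (equivalently, from the face labels in Figure~\ref{f1}) the pairs $(1,6)$, $(2,5)$, $(3,8)$, $(4,7)$ correspond to the four equatorial edges $BC$, $BD$, $DE$, $CE$, and each of these adjacent pairs is sent to the \emph{same} generator by the displayed $\tau$. The cube graph determined by \eqref{eq:group} has the unique bipartition $\{1,3,5,7\}$ versus $\{2,4,6,8\}$, so the only two-value assignment satisfying \ref{I1}--\ref{I2} is $x_1,x_3,x_5,x_7\mapsto e_1$, $x_2,x_4,x_6,x_8\mapsto e_2$ (up to symmetry). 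For the $\tau$ as displayed, the conclusion you are proving is in fact false: the cover acquires cone-angle-$\pi$ geodesics over the four equatorial edges, the ends over the four equatorial vertices have cross-section $\mathbb{S}^2_{2222}$ rather than tori (only the two ends over the apices are of torus type), so $M_\tau$ is a hyperbolic orbifold, not a manifold with six torus ends. A correct proof must either detect this and replace $\tau$ by the checkerboard colouring (presumably the intended statement) or derive the modified conclusion; since your argument for (b) hinges on a checkable assertion that fails, it does not go through as written. Once the colouring is corrected, your end count via the vertex groups is right, and deferring the presentation of $\pi_1(M_\tau)$ to a computer computation is consistent with the paper's own treatment.
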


The hyperbolic manifold $\check{Z}(P_O)$ has $96$ cusps. The variety $Z(P_O)$ is obtained
adding points in the places of the cusps. Following the ideas in the proof
of Proposition~\ref{prop:group_bipiramide} and computations with \texttt{Sagemath}
we get some topological information on $Z(P_O)$.

\begin{proposition}\label{prop:groups-octa}
The orbifold fundamental group of $P_O$ has the generators and relations of $\pi_1^{\text{\rm orb}}(\check{P}_O)$
plus the relations
\begin{equation}\label{eq:group1}
\begin{aligned}
{[x_i, x_j]} = 1,\ (i, j)\in \{&(1,3),(2,4),(5,7),(6,8),(1,7),(4,6)\\
&(1,5),(2,6),(3,7),(4,8),(3,8),(4,7),(3,5),(2,8)\}.
\end{aligned}
\end{equation}
and the fundamental group of $Z(P_O)$ is isomorphic to $\mathbb{Z}^4$.
\end{proposition}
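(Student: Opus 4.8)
The plan is to compute $\pi_1^{\text{orb}}(P_O)$ from the facet incidences of the octahedron, recognize it as a direct product of infinite dihedral groups, and then obtain $\pi_1(Z(P_O))$ as the kernel of the canonical epimorphism onto $(\mathbb{Z}/2)^8$, an identity that is part of the general setup recalled above.

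First I would write down the presentation of $\pi_1^{\text{orb}}(P_O)$. By the general formula it is $\langle x_1,\dots,x_8\mid x_i^2=1,\ [x_i,x_j]=1 \text{ whenever } P_O\cap\{x_i=x_j=0\}\neq\emptyset\rangle$, the generators labelled by the facets of Figure~\ref{f1}. Compared with $\check P_O$, passing to $P_O$ restores the six vertices, so the only relations to be added to the presentation of $\pi_1^{\text{orb}}(\check P_O)$ are those attached to pairs of facets meeting exactly at a vertex. I would run through the six vertex–stars $\{1,2,3,4\}$, $\{5,6,7,8\}$, $\{1,2,5,6\}$, $\{3,4,7,8\}$, $\{1,4,6,7\}$, $\{2,3,5,8\}$ (the index sets of the groups $G_{i,j,k,l}$): combined with the edge incidences of~\eqref{eq:group}, each star carries a unique $4$–cycle whose edges are the four octahedron edges through that vertex, and its two complementary (diagonal) pairs are precisely the facet pairs meeting only at that vertex. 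This produces the twelve pairs $(1,3),(2,4),(5,7),(6,8),(1,5),(2,6),(3,7),(4,8),(1,7),(4,6),(2,8),(3,5)$; together with the two pairs $(3,8),(4,7)$ — already relations in~\eqref{eq:group}, hence redundant — these form exactly the list~\eqref{eq:group1}, which gives the first assertion. I would also record the byproduct that the only facet pairs that never meet are the four antipodal pairs $\{1,8\},\{2,7\},\{3,6\},\{4,5\}$, the complement of \eqref{eq:group}$\,\cup\,$\eqref{eq:group1} in $\binom{[8]}{2}$.

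Next I would identify the group abstractly: its commuting graph is $K_8$ minus the perfect matching $\{\{1,8\},\{2,7\},\{3,6\},\{4,5\}\}$, which is the join of the four edgeless $2$–vertex graphs on the antipodal pairs. Since the right–angled Coxeter group of a join is the direct product of the factor groups, $\pi_1^{\text{orb}}(P_O)\cong D_\infty^{\,4}$, with $D_\infty=\mathbb{Z}/2*\mathbb{Z}/2$ and the four internal factors $\langle x_1,x_8\rangle,\langle x_2,x_7\rangle,\langle x_3,x_6\rangle,\langle x_4,x_5\rangle$. Then $\pi_1(Z(P_O))$ is the kernel of $\pi_1^{\text{orb}}(P_O)\twoheadrightarrow(\mathbb{Z}/2)^8$, $x_i\mapsto e_i$; this map respects the product decomposition and is, on each factor, the abelianization $D_\infty\twoheadrightarrow(\mathbb{Z}/2)^2$, whose kernel is the infinite cyclic group generated by $(st)^2$, $s,t$ the two standard generators (an index–$4$ subgroup, as is the kernel). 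Hence $\pi_1(Z(P_O))\cong\mathbb{Z}^4$, generated by $(x_1x_8)^2,(x_2x_7)^2,(x_3x_6)^2,(x_4x_5)^2$.

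The part requiring real care is the octahedral bookkeeping — reading off the $4$–cycle inside each vertex–star and checking that \eqref{eq:group}$\,\cup\,$\eqref{eq:group1} misses exactly the four antipodal pairs; once that is in place the group theory (the join decomposition and the kernel of $D_\infty\to(\mathbb{Z}/2)^2$) is routine. The single ingredient that must be imported rather than re-derived is the identity $\pi_1(Z(P))=\ker(\pi_1^{\text{orb}}(P)\to(\mathbb{Z}/2)^n)$ for the singular variety $Z(P_O)$, which was stated in the general discussion of this section.
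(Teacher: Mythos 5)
Your proof is correct, and it takes a genuinely different route from the paper's. The paper gives no written argument for this proposition: it invokes the general facts recalled earlier in the section (the facet-incidence presentation of the orbifold fundamental group and the identification of $\pi_1(Z)$ with the kernel of the natural map to $(\mathbb{Z}/2)^8$) and then determines that kernel by machine computation with \texttt{Sagemath}, following the scheme of Proposition~\ref{prop:group_bipiramide}, with the code in the cited repository. You replace the computation by a structural argument: your octahedral bookkeeping is right (the twelve vertex-diagonal pairs, together with the two redundant pairs $(3,8),(4,7)$ already present in \eqref{eq:group}, reproduce exactly the list \eqref{eq:group1}, and the only facet pairs that never meet are the four opposite-face pairs $\{1,8\},\{2,7\},\{3,6\},\{4,5\}$); hence the commuting graph is the complete multipartite graph $K_{2,2,2,2}$, a join, so $\pi_1^{\text{orb}}(P_O)\cong D_\infty\times D_\infty\times D_\infty\times D_\infty$ with factors generated by the opposite-facet pairs, and the kernel of the coordinatewise epimorphism onto $((\mathbb{Z}/2)^2)^4$ is the product of the commutator subgroups $\langle (x_ix_j)^2\rangle\cong\mathbb{Z}$ of the four dihedral factors, giving $\mathbb{Z}^4$ with explicit generators $(x_1x_8)^2,(x_2x_7)^2,(x_3x_6)^2,(x_4x_5)^2$. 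What your approach buys is a conceptual, computer-free proof with explicit generators reflecting the four pairs of parallel facets of the octahedron; what the paper's approach buys is uniformity, since the same computational pipeline also produces the other group-theoretic data quoted there (for instance the rank-$100$ abelianization of $\pi_1(\check{Z}(P_O))$ and the fundamental groups of the intermediate covers), where no such clean product structure is available. Your one imported ingredient, the identity $\pi_1(Z(P))=\ker\bigl(\pi_1^{\text{orb}}(P)\to(\mathbb{Z}/2)^n\bigr)$ applied to the singular variety, is legitimately used: it is stated in the general discussion of the paper and is employed there in exactly the same way for the bipyramid.
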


\subsection{The smoothings of the intersection associated to the octahedron}
\mbox{}

It is useful to work with a compact manifold $Z_T$ which has the homotopy type of $\check{Z}(P_O)$
which is obtained by removing conic neighborhoods of the singular points of $Z(P_O)$. This
is manifold whose boundary is composed by~$96$ tori. Actually, since these tori are the boundary
of neighborhoods of generic singularities for each boundary component we have
 a natural product decomposition $\mathbb{S}^1\times\mathbb{S}^1$ on each boundary component.
Hence we can obtain a smooth manifold by adding a solid torus $\mathbb{D}^2\times\mathbb{S}^1$ along each boundary component and compatible
with the product structure. It corresponds to compactify  $\check{Z}(P_O)$ by $\mathbb{S}^1$'s instead of points.
There are two possible ways to add this solid tori at each boundary component. Since the cusps are related by the monodromy action of the covering, the boundary components come in $6$ packages of $24$ tori and then we have $64=2^6$ natural choices, which coincide with the possible smoothings of $Z(P_O)$.

Since the symmetry group of $P_O$ acts on these choices, as in the previous sections they are distributed in orbits and we will have to study less cases.
Each choice corresponds to a polyhedron with $8$~faces (coming from the faces of the octahedron) $12+6$ edges (coming from the edges of the octahedron and from its vertices)
and $2\cdot 6$ vertices (each vertex of the octahedron is doubled).

\begin{proposition}
There are exactly~$14$ simply connected simple polyhedra of dimension~$3$ with exactly $8$~faces, $18$~edges,
and $12$~vertices (\cite{alloctahedra, Dutch20}). Up to permutation, only for six of them it is possible to contract six pairwise
disjoint edges to obtain an octahedron, but one of them in two distinct ways.
\end{proposition}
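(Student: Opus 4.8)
The plan is to treat the two assertions separately: the first is essentially a citation together with a forced arithmetic, and the second is a finite equivariant enumeration of the smoothings of $P_O$.

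For the number~$14$: by Steinitz's theorem the simple $3$-polytopes with $8$~facets are the combinatorial duals of the simplicial $3$-polytopes with $8$~vertices, and it is classical that there are exactly~$14$ of the latter (Br\"uckner's census; the explicit list is in~\cite{alloctahedra,Dutch20}). Once $F=8$ is fixed, the rest of the $f$-vector is forced, since a simple $3$-polytope satisfies $2E=3V$ and $V-E+F=2$, giving $V=12$ and $E=18$; and ``simply connected'' is automatic for a polytopal $2$-sphere. So this first sentence needs nothing beyond quoting the classification.

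For the second sentence the key observation is that ``contract six pairwise disjoint edges to obtain an octahedron'' is exactly the inverse of a \emph{total smoothing} of~$P_O$: splitting one of the six $4$-vertices of~$P_O$ into an edge is a $(2,2)$-flip, which can be performed in exactly two ways, and doing this at all six vertices with independent choices produces $2^6=64$ simple polytopes~$Q$, each carrying a distinguished perfect matching of six edges (the new ones) whose contraction returns~$P_O$; conversely, if contracting six disjoint edges of a simple polytope with $f$-vector $(12,18,8)$ yields an octahedron, re-expanding the six resulting $4$-valent vertices exhibits~$Q$ as one of these $64$ smoothings. Now $\aut(P_O)\cong(\mathbb{Z}/2)^3\rtimes\mathfrak{S}_3$, of order~$48$, acts on the set of $64$ choices, and one must note that this action is genuinely \emph{twisted}: the stabiliser of a vertex of~$P_O$ is a dihedral group of order~$8$ acting on its four surrounding faces, and it maps onto the $\mathbb{Z}/2$ that exchanges the two splittings at that vertex. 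I would then (i)~compute the $\aut(P_O)$-orbits on the $64$ smoothings (Burnside over the $48$ elements); (ii)~for one representative of each orbit write down the resulting polytope, using its facet-size vector $(p_3,p_4,p_5,p_6)$ (constrained by $\sum p_k=8$, $\sum k p_k=36$) and, when that is not decisive, the facet-adjacency graph; and (iii)~match each type against the census of~$14$. The statement to be confirmed is that exactly six of the fourteen types occur and that the orbit-to-polytope assignment is one-to-one except that two distinct orbits hit a single polytope~$Q_0$, i.e.\ $Q_0$ carries two $\aut(Q_0)$-inequivalent six-edge matchings contracting to an octahedron — precisely the asserted ``two distinct ways''. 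As sanity checks, the truncated tetrahedron is one of the six (its six hexagon--hexagon edges form a matching whose contraction turns the four hexagons into triangles and keeps the four triangles, producing an octahedron, and it is easily seen to be the only such matching there), and so is the hexagonal prism (contract three alternating edges of one hexagonal face together with the three ``opposite'' edges of the other hexagonal face, again in a single way); hence neither of these is~$Q_0$. The computation is routine but voluminous and is implemented in \texttt{Sagemath} in the accompanying repository; it can equally be done by hand from the table of the fourteen polytopes.

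The main obstacle is steps (i)--(iii): organising the $2^6$ smoothings into $\aut(P_O)$-orbits without miscounting because of the twisting just mentioned, and then identifying each resulting simple polytope reliably inside the census — in particular confirming that the polytope obtained ``in two ways'' is really hit by two different orbits, and not by two representatives of a single orbit. Invariants such as the facet-size vector, $3$-connectivity, and the types of vertex figures reduce the work drastically, but a couple of pairs among the fourteen share the same facet-size vector, so at that point one is forced to compare facet-adjacency graphs directly.
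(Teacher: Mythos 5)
Your proposal is correct and takes essentially the same route as the paper: the count of $14$ is taken from the cited censuses (with the $f$-vector $V=12$, $E=18$ forced by simplicity and Euler's formula), and the second assertion is reduced to identifying the polytopes admitting such a contraction with the $2^6=64$ combinatorial smoothings of the octahedron and enumerating their orbits under the order-$48$ symmetry group, a finite check the paper likewise delegates to a \texttt{Sagemath} computation in its accompanying repository. The orbit count you would obtain (sizes $12,24,2,4,4,6,12$, summing to $64$) matches the paper's subsequent theorem, with the Gyrobipentaprism being the single polyhedron hit by two distinct orbits, i.e.\ contractible to the octahedron in two inequivalent ways.
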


The second part of this proposition is related with the result we are interested in: up to automorphism
there are exactly seven distinct topological types of smoothings. There is an apparent contradiction
between these two statements. As we can see in Figure~\ref{fig:al}, one of the smoothings
corresponds to the same polyhedron but there are two distinct ways of contracting onto a octahedron.
The details can be found in \url{https://github.com/enriqueartal/SingularQuadricIntersections}.

\begin{thm}
The pairs composed by the polyhedra and the sets of red edges in Figure{\rm~\ref{fig:al}}
are the seven topological types of smoothings of the octahedron.
The size of the orbits of each type are:
$12$ in {\rm\subref{subfig:al2}}, $24$ in {\rm\subref{subfig:al3}}, $2$ in {\rm\subref{subfig:al4}}, $4$ in {\rm\subref{subfig:al5}}, $4$ in {\rm\subref{subfig:al6}},
$6$ in {\rm\subref{subfig:al1}}, $12$ in {\rm\subref{subfig:al1a}}.
\end{thm}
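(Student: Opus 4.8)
The plan is to reduce the statement to a finite orbit count for the action of the octahedral group $\aut P_O$ on the set of smoothings. Recall from the preceding discussion that the top-versal deformation of $Z(P_O)$ has parameter space a neighbourhood of the origin in $\R^6$ whose $64=2^6$ orthants are exactly the smoothings; equivalently, a smoothing is a choice $\sigma\colon\{v_1,\dots,v_6\}\to\{+,-\}$ of one of the two local resolutions at each of the six $4$-vertices of $P_O$ as in Figure~\ref{4pyra}. Each $\sigma$ produces a simple polyhedron $P_\sigma$ with $8$ faces, $18$ edges and $12$ vertices, marked by the set $R_\sigma$ of the six pairwise disjoint edges created by the resolutions, whose contraction recovers $P_O$. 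Two smoothings represent the same \emph{topological type of smoothing} exactly when they lie in one $\aut P_O$-orbit; since an isomorphism $(P_\sigma,R_\sigma)\simeq(P_{\sigma'},R_{\sigma'})$ of marked polyhedra descends, along the contractions, to a combinatorial symmetry of the octahedron, this happens iff $(P_\sigma,R_\sigma)$ and $(P_{\sigma'},R_{\sigma'})$ are isomorphic. By the classification recalled just before the theorem there are exactly seven isomorphism classes of such marked polyhedra --- the six simple polyhedra admitting a contraction of six pairwise disjoint edges onto an octahedron, one of them carrying two inequivalent such edge-sets --- and these are the seven pairs of Figure~\ref{fig:al}. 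Hence there are exactly seven types of smoothings, realized by those seven pairs (a notion finer than the homeomorphism type of $Z(P_\sigma)$, since distinct types may have homeomorphic underlying manifolds); what remains is to compute the size of each orbit.

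For this I would first make the action of $\aut P_O$ on $\Sigma\cong\{+,-\}^6$ explicit: a symmetry $g$ permutes the six vertices, and at a vertex $v$ the two resolutions are the two ways of grouping the four faces through $v$ into two adjacent pairs --- the two diagonals of the quadrilateral link of $v$ --- so $g$ acts on the local $\pm$ choice by either fixing or swapping these diagonals. This yields a homomorphism $\aut P_O\to\{\pm1\}^6\rtimes\mathfrak{S}_6$, which I would write out symmetry by symmetry; then I would count orbits, either by Burnside's lemma using the conjugacy classes of $\aut P_O$ and their fixed sign patterns, or by directly sifting the $64$ patterns as in the accompanying \texttt{Sagemath} code, obtaining orbits of sizes $12,24,2,4,4,6,12$ (which sum to $64$). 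To pin down which orbit is which subfigure, I would take a representative $\sigma$ of each orbit, write the corresponding perturbation $\multiconj_t$ of the eight points of $\multiconj\subset\R^4$, compute the face lattice of $P_\sigma$ and the edges $R_\sigma$, and identify $(P_\sigma,R_\sigma)$ with the entry of Figure~\ref{fig:al} it is isomorphic to --- separating \subref{subfig:al1} from \subref{subfig:al1a} by the two inequivalent markings of their common polyhedron, and checking that the two orbits of size $12$ (respectively the two of size $4$) land in distinct subfigures.

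The main obstacle is the bookkeeping, not any conceptual point: correctly recording, for each of the many symmetries of the octahedron, which of the six $4$-vertices have their two resolutions interchanged, and then running the orbit count and the orbit-to-subfigure matching without error. This is exactly where explicit computation is invoked; everything else is already in hand --- that all $64$ sign patterns occur as smoothings, that each $P_\sigma$ is simple and contracts onto $P_O$ along $R_\sigma$, and that the resulting marked polyhedra exhaust the seven pairs of Figure~\ref{fig:al}.
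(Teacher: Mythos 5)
Your proposal follows essentially the same route as the paper: identify the $64=2^6$ smoothings with sign choices at the six $4$-vertices, let $\aut P_O$ act on them, match the orbits with the marked pairs (polyhedron, red edges) of Figure~\ref{fig:al} via the classification of simple polyhedra with $8$ faces contracting onto the octahedron, and settle the orbit count and the orbit-to-subfigure identification by explicit (\texttt{Sagemath}) computation, which is precisely what the paper does by deferring the details to its accompanying repository. The argument and the orbit sizes (summing to $64$) are correct.
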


\begin{figure}[ht]
\begin{subfigure}[b]{0.3\textwidth}
\centering
﻿\begin{tikzpicture}[scale=1]

\begin{scope}[xshift=3cm]
\foreach \a in {0,...,4}
{
\coordinate (A\a) at (90*\a + 45:1);
\coordinate (B\a) at (90*\a + 45:2/5);
}
\foreach \a[evaluate=\a as \b using \a+1] in {0,...,3}
{
\draw (B\a) -- (B\b);
}

\coordinate (A21) at ($.8*(A2) + .2*(A1)$);
\coordinate (A23) at ($.8*(A2) + .2*(A3)$);
\coordinate (A30) at ($.8*(A3) + .2*(A0)$);
\coordinate (A32) at ($.8*(A3) + .2*(A2)$);
\coordinate (A22) at ($1/3*(A2)+2/3*(B2)$);
\coordinate (A33) at ($1/3*(A3)+2/3*(B3)$);

\draw (A0) -- (A1) -- (A21) -- (A23) -- (A32)
-- (A30) -- cycle;
\draw (A0) -- (B0);
\draw (A1) -- (B1);
\draw (A21) -- (A22) -- (A23);
\draw (A30) -- (A33) -- (A32);
\draw (A22) -- (B2);
\draw (A33) -- (B3);

\draw[red, line width=1] (A32) -- (A23)
(B2) -- (A22) (B3) -- (A33) (A4) -- (A30)
(A1) -- (A21) (B0) -- (B1);
\end{scope}
\end{tikzpicture}
 \caption{}
\label{subfig:al2}
\end{subfigure}
\begin{subfigure}[b]{0.3\textwidth}
\centering
﻿\begin{tikzpicture}[scale=1]

\begin{scope}[xshift=3cm]
\foreach \a in {0,...,5}
{
\coordinate (A\a) at (72*\a + 90:1);
\coordinate (B\a) at (72*\a+ 90:.5);
}
\foreach \a [evaluate=\a as \b using \a+1] in {0,...,4}
{
\draw (A\a) --(A\b);
}
\foreach \a in {1,...,4}
{
\draw (A\a) -- (B\a);
}
\coordinate (A00) at ($.5*(A0) + .5*(B0)$);
\coordinate (B01) at ($.5*(B0) + .5*(B1)$);
\coordinate (B04) at ($.5*(B0) + .5*(B4)$);

\draw (A0) -- (A00);
\draw (B01) -- (B04);
\draw (A00) -- (B01) -- (B1) -- (B2) --
(B3) -- (B4) -- (B04) --cycle;

\draw[red, line width=1] (B01) -- (B1)
(B04) -- (B4) (B2) -- (B3) (A1) -- (A2)
(A0) -- (A00) (A3) -- (A4);
\end{scope}
\end{tikzpicture}
 \caption{Scutoid}
\label{subfig:al3}
\end{subfigure}
\begin{subfigure}[b]{0.3\textwidth}
\centering
\begin{tikzpicture}[scale=.7]

\foreach \a in {0,...,3}
{
\coordinate (A\a) at (90 + \a*120:1/3);
}
\foreach \a[evaluate=\a as \b using \a+1] in {0,...,2}
{
\draw (A\a) -- (A\b);
}

\foreach \a in {0,...,3}
{
\coordinate (B\a) at ([shift={(90:4/3)}, rotate=60]90 + \a*120:1/3);
}
\foreach \a[evaluate=\a as \b using \a+1] in {0,...,2}
{
\draw (B\a) -- (B\b);
}

\foreach \a in {0,...,3}
{
\coordinate (C\a) at ([shift={(210:4/3)}, rotate=60]90 + \a*120:1/3);
}
\foreach \a[evaluate=\a as \b using \a+1] in {0,...,2}
{
\draw (C\a) -- (C\b);
}

\foreach \a in {0,...,3}
{
\coordinate (D\a) at ([shift={(-30:4/3)}, rotate=60]90 + \a*120:1/3);
}
\foreach \a[evaluate=\a as \b using \a+1] in {0,...,2}
{
\draw (D\a) -- (D\b);
}

\draw (A0) -- (B1);
\draw (A1) -- (C2);
\draw (A2) -- (D3);

\draw (B0) -- (C0);
\draw (C1) -- (D1);
\draw (D2) -- (B2);

\draw[red, line width=1] (A0) -- (B1)
(D2) -- (B2) (A2) -- (D0) (A1) -- (C2)
(C0) -- (B0) (D1) -- (C1);
\end{tikzpicture}
 \caption{4-truncated tetrahedon}
\label{subfig:al4}
\end{subfigure}

\begin{subfigure}[b]{0.225\textwidth}
\centering
\begin{tikzpicture}[scale=1.5]

\coordinate (A0) at (3/5, -2/5);
\coordinate (A1) at (1/2, 1/2);
\coordinate (A2) at (-1/2, 1/2);
\coordinate (A3) at (-3/5, -3/5);
\coordinate (A4) at (2/5, -3/5);
\coordinate (A5) at (A0);

\coordinate (B0) at (1/4, -1/4);
\coordinate (B1) at (1/4, 1/4);
\coordinate (B2) at (0, 1/4);
\coordinate (B3) at (-1/4, 0);
\coordinate (B4) at (-1/4, -1/4);
\coordinate (B5) at (B0);

\coordinate (C0) at (2/5,-2/5);
\coordinate (C2) at (-1/3,1/3);

\foreach \a[evaluate=\a as \b using \a+1] in {0,...,4}
{
\draw (A\a) -- (A\b);
\draw (B\a) -- (B\b);
}

\draw (A0) -- (C0) -- (B0);
\draw (C0) -- (A4);
\draw (A1) -- (B1);
\draw (A2) -- (C2) -- (B2);
\draw (C2) -- (B3);
\draw (A3) -- (B4);

\draw[red, line width=1] (B1) -- (B2)
(C2) -- (A2) (A0) -- (A1) (C0) -- (B0)
(B3) -- (B4) (A4) -- (A3);

\end{tikzpicture}
 \caption{D\"{u}rer-Solid}
\label{subfig:al6}
\end{subfigure}
\begin{subfigure}[b]{0.225\textwidth}
\centering
\begin{tikzpicture}

\foreach \a in {0,...,6}
{
\coordinate (A\a) at (\a*60:1);
\coordinate (B\a) at (\a*60:1/2);
}
\foreach \a[evaluate=\a as \b using \a+1] in {0,...,5}
{
\draw (A\a) -- (A\b);
\draw (B\a) -- (B\b);
\draw (A\a) -- (B\a);
}

\draw[red, line width=1] (B3) -- (B4)
(B1) -- (B2) (B0) -- (B5)
(A4) -- (A5) (A0) -- (A1) (A2) -- (A3);

\end{tikzpicture}
 \caption{Hexagonal prism}
\label{subfig:al5}
\end{subfigure}
\begin{subfigure}[b]{0.225\textwidth}
\centering
﻿\begin{tikzpicture}

\begin{scope}[xshift=2.5cm]
\foreach \a in {0,...,5}
{
\coordinate (A\a) at (72*\a + 90:1);
\coordinate (B\a) at (72*\a+ 90:.5);
}
\foreach \a [evaluate=\a as \b using \a+1] in {0,...,4}
{
\draw (A\a) --(A\b);
\draw (B\a) --(B\b);
}
\foreach \a in {0,1,4}
{
\draw (A\a) -- (B\a);
}
\coordinate (C2) at ($.8*(A2) + .2*(B4)$);
\coordinate (C3) at ($.8*(A3) + .2*(B1)$);

\draw (A2) -- (C2) -- (B2);
\draw (A3) -- (C3) -- (B3);
\draw[red, line width=1]
(C2) -- (C3) (A1) -- (A2) (B1) -- (B2)
(B3) -- (B4) (A3) -- (A4) (A5) -- (B5);
\end{scope}
\end{tikzpicture}
 \caption{$G\! B\! P_5$}
\label{subfig:al1}
\end{subfigure}
\begin{subfigure}[b]{0.225\textwidth}
\centering
﻿\begin{tikzpicture}

\begin{scope}[xshift=2.5cm]
\foreach \a in {0,...,5}
{
\coordinate (A\a) at (72*\a + 90:1);
\coordinate (B\a) at (72*\a+ 90:.5);
}
\foreach \a [evaluate=\a as \b using \a+1] in {0,...,4}
{
\draw (A\a) --(A\b);
\draw (B\a) --(B\b);
}
\foreach \a in {0,1,4}
{
\draw (A\a) -- (B\a);
}
\coordinate (C2) at ($.8*(A2) + .2*(B4)$);
\coordinate (C3) at ($.8*(A3) + .2*(B1)$);

\draw (A2) -- (C2) -- (B2);
\draw (A3) -- (C3) -- (B3);
\draw (C2)  -- (C3);

\draw[red, line width=1]
(C2) -- (A2) (C3) -- (B3) (B1) -- (B2)
(B5) -- (B4) (A3) -- (A4) (A5) -- (A1);

\end{scope}
\end{tikzpicture}
 \caption{$G\! B\! P_5$}
\label{subfig:al1a}
\end{subfigure}
\caption{}
\label{fig:al}
\end{figure}

Some of these polyhedra have their own names: the polyhedra \subref{subfig:al3} is known as \emph{Scutoid} (\cite{GVTFCLVGB:18}); the polyhedra \subref{subfig:al1} or \subref{subfig:al1a}
$G\! B\! P_5$ is the \emph{Gyrobipentaprism}, that is the result of pasting together two pentagonal prism along a lateral face by a $\frac{\pi}{2}$-turn; and the
polyhedra~\subref{subfig:al6} is the \emph{D\"{u}rer-Solid}, the  solid depicted in an engraving entitled \texttt{Melencolia\!~I} by Albrecht D\"{u}rer in 1514.
The graph $\Gamma(P_O)$ is in the Figure~\ref{fig:grafo-oct}.

\begin{figure}[ht]
\centering
\begin{tikzpicture}

\coordinate (A) at (0,0);
\coordinate (B) at (-3,0);
\coordinate (C) at (3,0);
\coordinate (D) at (-2,1.5);
\coordinate (E) at (2,1.5);
\coordinate (F) at (-4,1.5);
\coordinate (G) at (0,3);

\fill (A) node[below] {$[24]$} node[above] {\subref{subfig:al3}} circle [radius=.1];
\fill (B) node[below] {$[4]$} node[left] {\subref{subfig:al6}} circle [radius=.1];
\fill (C) node[below] {$[4]$} node[right] {\subref{subfig:al5}} circle [radius=.1];
\fill (D) node[right=6pt] {$[12]$} node[right=8mm] {\subref{subfig:al2}} circle [radius=.1];
\fill (E) node[right] {$[12]$}  node[right=6mm] {\subref{subfig:al1a}} circle [radius=.1];
\fill (F) node[below] {$[2]$} node[left] {\subref{subfig:al4}}   circle [radius=.1];
\fill (G) node[above] {$[6]$} node[above right] {\subref{subfig:al1}} circle [radius=.1];
\draw[->-=.5] (A) -- node[below] {$(1,-6)$} (B);
\draw[->-=.5] (A) -- node[below] {$(1,-6)$} (C);
\draw[->-=.5] (A) -- node[left=4pt] {$(2,-4)$} (D);
\draw[->-=.5] (A) -- node[right=4pt] {$(2,-4)$} (E);
\draw[->-=.5] (D) -- node[above] {$(1,-6)$} (F);
\draw[->-=.5] (D) -- node[above left] {$(1,-2)$} (G);
\draw[->-=.5] (E) -- node[above right] {$(2,-4)$} (G);
\end{tikzpicture}
\caption{}
\label{fig:grafo-oct}
\end{figure}

In the following sections, we describe the topology of all these smoothings.
With the help of \texttt{Sagemath}~\cite{Sagemath}, we describe some fundamental subgroups.
The code that allowed us to find it can be found in
\url{https://github.com/enriqueartal/SingularQuadricIntersections}; it can be executed
in a local installation of \texttt{Sagemath} or using \texttt{Binder}~\cite{binder}.

\begin{proposition}
The fundamental group of $Z(P_O)$ is free abelian of rank~$4$.
The fundamental group of the manifold $Z_T$ is the derived
of $\check{G}'$ in{\rm~\eqref{eq:group}} and its abelianization is free
of rank~$100$.
\end{proposition}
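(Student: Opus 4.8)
The plan is to recognize that both assertions repackage facts already available and to isolate the single step that genuinely requires work.

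For the first assertion I would argue directly. By Proposition~\ref{prop:groups-octa}, $\pi_1^{\mathrm{orb}}(P_O)$ is presented on $x_1,\dots,x_8$ with $x_i^2=1$ and the commuting relations listed in \eqref{eq:group} and \eqref{eq:group1}; together these name $24$ distinct commuting pairs, so among the $\binom{8}{2}=28$ pairs exactly four fail to commute, and reading off Figure~\ref{f1} these are the pairs of opposite facets of the octahedron, $\{1,8\},\{2,7\},\{3,6\},\{4,5\}$, which form a perfect matching of the eight facets. Since every generator lies in exactly one non-commuting pair, this right-angled Coxeter group splits as a direct product of four infinite dihedral groups,
\[
\pi_1^{\mathrm{orb}}(P_O)\;\cong\;D_\infty\times D_\infty\times D_\infty\times D_\infty ,
\]
one factor per opposite pair. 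By the general principle recalled just before Proposition~\ref{prop:group_bipiramide}, $\pi_1(Z(P_O))$ is the kernel of the tautological epimorphism onto $(\mathbb{Z}/2)^8$; this epimorphism is the abelianization of $\pi_1^{\mathrm{orb}}(P_O)$, so its kernel is the commutator subgroup $[D_\infty,D_\infty]^4\cong\mathbb{Z}^4$, generated by $(x_1x_8)^2,(x_2x_7)^2,(x_3x_6)^2,(x_4x_5)^2$. (Equivalently, $Z(P_O)$ is $\check{Z}(P_O)$ with the $96$ cones over tori glued back in, so by van Kampen $\pi_1(Z(P_O))$ is $\pi_1(\check{Z}(P_O))$ modulo the normal closure of the peripheral torus subgroups, and one checks this quotient is free abelian of rank four.)

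For the second assertion I would first use that $Z_T$ has the homotopy type of $\check{Z}(P_O)$: it is obtained from $Z(P_O)$ by deleting open conic neighbourhoods of the $96$ singular points, each a cone over a torus, so $Z_T$ deformation retracts onto $\check{Z}(P_O)$ and $\pi_1(Z_T)\cong\pi_1(\check{Z}(P_O))$. By the corollary to Proposition~\ref{prop:oh}, $\pi_1(\check{Z}(P_O))$ is the derived subgroup of $\pi_1^{\mathrm{orb}}(\check{P}_O)$, that is of the group presented by \eqref{eq:group}, and its abelianization is free of rank~$100$. To produce these last facts I would apply Reidemeister–Schreier to this index-$2^8$ normal subgroup and abelianize, which is the computation carried out with \texttt{Sagemath}~\cite{Sagemath} (code at \url{https://github.com/enriqueartal/SingularQuadricIntersections}).

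The hard part is exactly that last computation: the commutator subgroup of the eight-generator Coxeter group \eqref{eq:group} has index~$256$, so the Reidemeister–Schreier presentation carries on the order of hundreds of generators and thousands of relations, and one must rely on a computer algebra system to abelianize it faithfully and read off the rank~$100$. The first assertion, by contrast, needs no machine once the $D_\infty^4$ splitting is observed; and if one prefers to avoid even that, the van Kampen argument reduces it to the small by-hand verification that killing the peripheral tori inside $\pi_1(\check{Z}(P_O))$ collapses it onto $\mathbb{Z}^4$.
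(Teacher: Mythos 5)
Your proposal is correct, and for the first assertion it takes a genuinely different route from the paper. The paper gives no written argument here: both the presentation of $\pi_1^{\text{orb}}(P_O)$ and the conclusion $\pi_1(Z(P_O))\cong\mathbb{Z}^4$ are obtained (Proposition~\ref{prop:groups-octa}) by following the scheme of Proposition~\ref{prop:group_bipiramide} together with \texttt{Sagemath} computations, and the present proposition is justified only by pointing to that code. Your observation that the $24$ distinct commuting pairs in \eqref{eq:group} and \eqref{eq:group1} leave uncommuted exactly the four pairs of opposite facets $\{1,8\},\{2,7\},\{3,6\},\{4,5\}$ of Figure~\ref{f1}, so that $\pi_1^{\text{orb}}(P_O)$ is the right-angled Coxeter group on a join of four edgeless two-vertex graphs, hence $D_\infty\times D_\infty\times D_\infty\times D_\infty$, and that the natural epimorphism onto $(\mathbb{Z}/2)^8$ is precisely the abelianization, with kernel $[D_\infty,D_\infty]^4\cong\mathbb{Z}^4$ generated by $(x_1x_8)^2,(x_2x_7)^2,(x_3x_6)^2,(x_4x_5)^2$, gives a machine-free verification of the rank-$4$ statement; this is a real gain over the computer-assisted treatment, and it is not circular since it uses only the presentation in Proposition~\ref{prop:groups-octa}, not its conclusion. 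For the second assertion you do essentially what the paper does: $Z_T$ has the homotopy type of $\check{Z}(P_O)$, $\pi_1(\check{Z}(P_O))$ is the derived subgroup of the group presented by \eqref{eq:group} (the corollary to Proposition~\ref{prop:oh}), and the rank-$100$ abelianization is the Reidemeister--Schreier/\texttt{Sagemath} computation, exactly as the authors indicate. Two minor slips, neither affecting correctness: the retraction goes the other way ($\check{Z}(P_O)$ retracts onto the compact submanifold $Z_T$, not $Z_T$ onto $\check{Z}(P_O)$), and your parenthetical van Kampen check that killing the peripheral tori collapses $\pi_1(\check{Z}(P_O))$ to $\mathbb{Z}^4$ is asserted rather than carried out, though it is superseded by your Coxeter-group argument.
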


\begin{figure}[ht]
\begin{subfigure}[b]{0.225\textwidth}
\centering
﻿\begin{tikzpicture}[scale=.4]

\borromeo
\end{tikzpicture}
 \caption*{}
\end{subfigure}
\setcounter{subfigure}{0}
\begin{subfigure}[b]{0.225\textwidth}
\centering
﻿\begin{tikzpicture}[scale=.4]

\borromeoa

\dsa{(R1-2)}{(R1-4)}{(R1-3)}{(R1-5)}
\dsa{(R2-1)}{(R2-4)}{(R2-3)}{(R2-6)}
\dsa{(R3-1)}{(R3-2)}{(R3-5)}{(R3-6)}
\dsa{(R4-1)}{(R4-2)}{(R4-5)}{(R4-6)}
\dsa{(R5-1)}{(R5-3)}{(R5-4)}{(R5-6)}
\dsa{(R6-2)}{(R6-3)}{(R6-4)}{(R6-5)}

\node at ($1/3*(P1)+1/3*(P2)+1/3*(P3)$) {$5$};
\node at ($1/3*(P1)+1/3*(P2)+3/7*(P4)$) {$3$};
\node at ($1/3*(P1)+1/3*(P3)+3/7*(P5)$) {$4$};
\node at ($1/3*(P2)+1/3*(P3)+3/7*(P6)$) {$4$};
\node at ($3/4*(R2-4)+1*(R2-6)$) {$6$};
\node at ($3/4*(R3-5)+1*(R3-6)$) {$5$};
\node at ($3/4*(R1-4)+3/4*(R1-5)$) {$5$};
\node at ($1/2*(R5-4)+2/3*(R5-6)$) {$3$};
\end{tikzpicture}
 \caption{}
\label{subfig:oc2}
\end{subfigure}
\begin{subfigure}[b]{0.225\textwidth}
\centering
﻿\begin{tikzpicture}[scale=.4]
\borromeoa

\dsa{(R1-2)}{(R1-4)}{(R1-3)}{(R1-5)}
\dsa{(R2-1)}{(R2-3)}{(R2-4)}{(R2-6)}
\dsa{(R3-1)}{(R3-2)}{(R3-5)}{(R3-6)}
\dsa{(R4-1)}{(R4-2)}{(R4-5)}{(R4-6)}
\dsa{(R5-1)}{(R5-3)}{(R5-4)}{(R5-6)}
\dsa{(R6-2)}{(R6-3)}{(R6-4)}{(R6-5)}

\node at ($1/3*(P1)+1/3*(P2)+1/3*(P3)$) {$4$};
\node at ($1/3*(P1)+1/3*(P2)+3/7*(P4)$) {$4$};
\node at ($1/3*(P1)+1/3*(P3)+3/7*(P5)$) {$4$};
\node at ($1/3*(P2)+1/3*(P3)+3/7*(P6)$) {$5$};
\node at ($3/4*(R2-4)+1*(R2-6)$) {$5$};
\node at ($3/4*(R3-5)+1*(R3-6)$) {$5$};
\node at ($3/4*(R1-4)+3/4*(R1-5)$) {$6$};
\node at ($1/2*(R5-4)+2/3*(R5-6)$) {$3$};

\end{tikzpicture}
 \caption{}
\label{subfig:oc3}
\end{subfigure}
\begin{subfigure}[b]{0.225\textwidth}
\centering
﻿\begin{tikzpicture}[scale=.4]

\borromeoa
\dsa{(R1-2)}{(R1-4)}{(R1-3)}{(R1-5)}
\dsa{(R2-1)}{(R2-4)}{(R2-3)}{(R2-6)}
\dsa{(R3-1)}{(R3-5)}{(R3-2)}{(R3-6)}
\dsa{(R4-1)}{(R4-2)}{(R4-5)}{(R4-6)}
\dsa{(R5-1)}{(R5-3)}{(R5-4)}{(R5-6)}
\dsa{(R6-2)}{(R6-3)}{(R6-4)}{(R6-5)}

\node at ($1/3*(P1)+1/3*(P2)+1/3*(P3)$) {$6$};
\node at ($1/3*(P1)+1/3*(P2)+3/7*(P4)$) {$3$};
\node at ($1/3*(P1)+1/3*(P3)+3/7*(P5)$) {$3$};
\node at ($1/3*(P2)+1/3*(P3)+3/7*(P6)$) {$3$};
\node at ($3/4*(R2-4)+1*(R2-6)$) {$6$};
\node at ($3/4*(R3-5)+1*(R3-6)$) {$6$};
\node at ($3/4*(R1-4)+3/4*(R1-5)$) {$6$};
\node at ($1/2*(R5-4)+2/3*(R5-6)$) {$3$};
\end{tikzpicture}

 \caption{}
\label{subfig:oc4}
\end{subfigure}

\begin{subfigure}[b]{0.225\textwidth}
\centering
﻿\begin{tikzpicture}[scale=.4]

\borromeoa

\dsa{(R1-2)}{(R1-3)}{(R1-4)}{(R1-5)}
\dsa{(R2-1)}{(R2-3)}{(R2-4)}{(R2-6)}
\dsa{(R3-1)}{(R3-2)}{(R3-5)}{(R3-6)}
\dsa{(R4-1)}{(R4-2)}{(R4-5)}{(R4-6)}
\dsa{(R5-1)}{(R5-3)}{(R5-4)}{(R5-6)}
\dsa{(R6-2)}{(R6-3)}{(R6-4)}{(R6-5)}

\node at ($1/3*(P1)+1/3*(P2)+1/3*(P3)$) {$3$};
\node at ($1/3*(P1)+1/3*(P2)+3/7*(P4)$) {$5$};
\node at ($1/3*(P1)+1/3*(P3)+3/7*(P5)$) {$5$};
\node at ($1/3*(P2)+1/3*(P3)+3/7*(P6)$) {$5$};
\node at ($3/4*(R2-4)+1*(R2-6)$) {$5$};
\node at ($3/4*(R3-5)+1*(R3-6)$) {$5$};
\node at ($3/4*(R1-4)+3/4*(R1-5)$) {$5$};
\node at ($1/2*(R5-4)+2/3*(R5-6)$) {$3$};

\end{tikzpicture}
 \caption{}
\label{subfig:oc6}
\end{subfigure}
\begin{subfigure}[b]{0.225\textwidth}
\centering
﻿\begin{tikzpicture}[scale=.4]

\borromeoa
\dsa{(R1-2)}{(R1-3)}{(R1-4)}{(R1-5)}
\dsa{(R2-1)}{(R2-3)}{(R2-4)}{(R2-6)}
\dsa{(R3-1)}{(R3-5)}{(R3-2)}{(R3-6)}
\dsa{(R4-1)}{(R4-5)}{(R4-2)}{(R4-6)}
\dsa{(R5-1)}{(R5-3)}{(R5-4)}{(R5-6)}
\dsa{(R6-2)}{(R6-3)}{(R6-4)}{(R6-5)}

\node at ($1/3*(P1)+1/3*(P2)+1/3*(P3)$) {$4$};
\node at ($1/3*(P1)+1/3*(P2)+3/7*(P4)$) {$6$};
\node at ($1/3*(P1)+1/3*(P3)+3/7*(P5)$) {$4$};
\node at ($1/3*(P2)+1/3*(P3)+3/7*(P6)$) {$4$};
\node at ($3/4*(R2-4)+1*(R2-6)$) {$4$};
\node at ($3/4*(R3-5)+1*(R3-6)$) {$6$};
\node at ($3/4*(R1-4)+3/4*(R1-5)$) {$4$};
\node at ($1/2*(R5-4)+2/3*(R5-6)$) {$4$};
\end{tikzpicture}

 \caption{}
\label{subfig:oc5}
\end{subfigure}
\begin{subfigure}[b]{0.225\textwidth}
\centering
﻿\begin{tikzpicture}[scale=.4]
\borromeoa
\dsa{(R1-2)}{(R1-4)}{(R1-3)}{(R1-5)}
\dsa{(R2-1)}{(R2-4)}{(R2-3)}{(R2-6)}
\dsa{(R3-1)}{(R3-2)}{(R3-5)}{(R3-6)}
\dsa{(R4-1)}{(R4-5)}{(R4-2)}{(R4-6)}
\dsa{(R5-1)}{(R5-3)}{(R5-4)}{(R5-6)}
\dsa{(R6-2)}{(R6-3)}{(R6-4)}{(R6-5)}

\node at ($1/3*(P1)+1/3*(P2)+1/3*(P3)$) {$5$};
\node at ($1/3*(P1)+1/3*(P2)+3/7*(P4)$) {$4$};
\node at ($1/3*(P1)+1/3*(P3)+3/7*(P5)$) {$4$};
\node at ($1/3*(P2)+1/3*(P3)+3/7*(P6)$) {$4$};
\node at ($3/4*(R2-4)+1*(R2-6)$) {$5$};
\node at ($3/4*(R3-5)+1*(R3-6)$) {$5$};
\node at ($3/4*(R1-4)+3/4*(R1-5)$) {$5$};
\node at ($1/2*(R5-4)+2/3*(R5-6)$) {$4$};
\end{tikzpicture}
 \caption{$G\negthinspace B\negthinspace P_5$}
\label{subfig:oc1}
\end{subfigure}
\begin{subfigure}[b]{0.225\textwidth}
\centering
﻿\begin{tikzpicture}[scale=.4]
\borromeoa
\dsa{(R1-2)}{(R1-4)}{(R1-3)}{(R1-5)}
\dsa{(R2-1)}{(R2-3)}{(R2-4)}{(R2-6)}
\dsa{(R3-1)}{(R3-2)}{(R3-5)}{(R3-6)}
\dsa{(R4-1)}{(R4-5)}{(R4-2)}{(R4-6)}
\dsa{(R5-1)}{(R5-3)}{(R5-4)}{(R5-6)}
\dsa{(R6-2)}{(R6-3)}{(R6-4)}{(R6-5)}

\node at ($1/3*(P1)+1/3*(P2)+1/3*(P3)$) {$4$};
\node at ($1/3*(P1)+1/3*(P2)+3/7*(P4)$) {$5$};
\node at ($1/3*(P1)+1/3*(P3)+3/7*(P5)$) {$4$};
\node at ($1/3*(P2)+1/3*(P3)+3/7*(P6)$) {$5$};
\node at ($3/4*(R2-4)+1*(R2-6)$) {$4$};
\node at ($3/4*(R3-5)+1*(R3-6)$) {$5$};
\node at ($3/4*(R1-4)+3/4*(R1-5)$) {$5$};
\node at ($1/2*(R5-4)+2/3*(R5-6)$) {$4$};
\end{tikzpicture}
 \caption{$G\negthinspace B\negthinspace P_5$}
\label{subfig:oc1a}
\end{subfigure}
\caption{Stereographic presentation}
\label{fig:oc}
\end{figure}
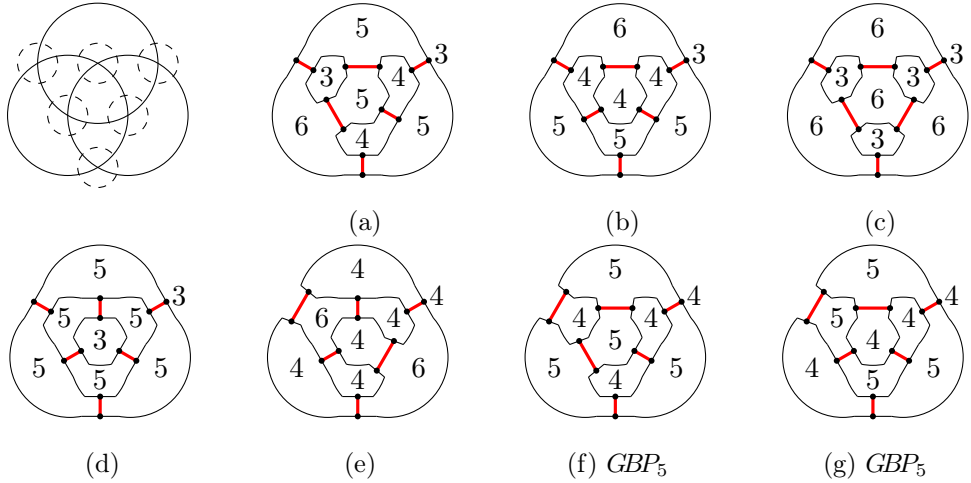

\subsection{Topology of the manifolds obtained in cases  \texorpdfstring{\subref{subfig:al2}-\subref{subfig:al5}}{(a)-(e)}}\label{subsec:especial}
\mbox{}

The topology can be described in most cases using the fact that the resulting polyhedra are truncations of simpler polyhedra. These polyhedra are connected
by flips of type~$(2,2)$. We are going to relate them
with the distinct smoothings of the octahedron.

\begin{figure}[ht]
\centering
\begin{subfigure}[b]{0.3\textwidth}
\centering
\begin{tikzpicture}
\foreach \a in {0,...,4}
{
\coordinate (A\a) at (90*\a + 45:1);
\coordinate (B\a) at (90*\a + 45:2/5);
}
\foreach \a[evaluate=\a as \b using \a+1] in {0,...,3}
{
\draw (A\a) -- (A\b);
\draw (B\a) -- (B\b);
\draw (A\a) -- (B\a);
}
\draw[line width=1.2] ($.8*(A2) + .2*(A1)$) --
($.8*(A2) + .2*(A3)$) --
($1/3*(A2)+2/3*(B2)$) -- cycle;

\draw[line width=1.2] ($.8*(A3) + .2*(A0)$) --
($.8*(A3) + .2*(A2)$) --
($1/3*(A3)+2/3*(B3)$) -- cycle;
\end{tikzpicture}
 \caption{}
\label{subfig:tr2}
\end{subfigure}
\begin{subfigure}[b]{0.3\textwidth}
\centering
﻿\begin{tikzpicture}[scale=1]
\foreach \a in {0,...,5}
{
\coordinate (A\a) at (72*\a + 90:1);
\coordinate (B\a) at (72*\a+ 90:.5);
}
\foreach \a [evaluate=\a as \b using \a+1] in {0,...,4}
{
\draw (A\a) --(A\b);
\draw (B\a) --(B\b);
}
\foreach \a in {0,...,4}
{
\draw (A\a) -- (B\a);
}
\draw[line width=1.2]
($.5*(A0) + .5*(B0)$) --
($.5*(B0) + .5*(B1)$) --
($.5*(B0) + .5*(B4)$) -- cycle;
\end{tikzpicture}
 \caption{}
\label{subfig:tr3}
\end{subfigure}
\begin{subfigure}[b]{0.3\textwidth}
\centering
\begin{tikzpicture}
\coordinate (O) at (0,0);
\foreach \a in {0,1,2}
{
\coordinate (A\a) at (120*\a+90:1);
\draw (O) -- (A\a);
}
\draw (A0) -- (A1) -- (A2) -- cycle;
\coordinate (A10) at ($1/4*(A0) + 3/4*(A1)$);
\coordinate (A12) at ($1/4*(A2) + 3/4*(A1)$);
\coordinate (A1O) at ($.5*(O) + .5*(A1)$);
\coordinate (A20) at ($1/4*(A0) + 3/4*(A2)$);
\coordinate (A21) at ($1/4*(A1) + 3/4*(A2)$);
\coordinate (A2O) at ($.5*(O) + .5*(A2)$);
\coordinate (A02) at ($3/4*(A0) + 1/4*(A2)$);
\coordinate (A01) at ($1/4*(A1) + 3/4*(A0)$);
\coordinate (A0O) at ($.5*(O) + .5*(A0)$);
\coordinate (O0) at ($2/3*(O) + 1/3*(A0)$);
\coordinate (O1) at ($2/3*(O) + 1/3*(A1)$);
\coordinate (O2) at ($2/3*(O) + 1/3*(A2)$);

\draw[line width=1.2] (A10) -- (A12) -- (A1O) --cycle;
\draw[line width=1.2] (A20) -- (A21) -- (A2O) --cycle;
\draw[line width=1.2] (A02) -- (A01) -- (A0O) --cycle;
\draw[line width=1.2] (O0) -- (O1) -- (O2) -- cycle;

\end{tikzpicture}
 \caption{}
\label{subfig:tr4}
\end{subfigure}

\begin{subfigure}[b]{0.3\textwidth}
\centering
\begin{tikzpicture}
\foreach \a in {0,...,4}
{
\coordinate (A\a) at (90*\a + 45:1);
\coordinate (B\a) at (90*\a + 45:2/5);
}
\foreach \a[evaluate=\a as \b using \a+1] in {0,...,3}
{
\draw (A\a) -- (A\b);
\draw (B\a) -- (B\b);
\draw (A\a) -- (B\a);
}
\draw[line width=1.2] ($.5*(B1) + .5*(B2)$) --
($.5*(B1) + .5*(B0)$) --
($.5*(B1)+.5*(A1)$) -- cycle;

\draw[line width=1.2] ($.8*(A3) + .2*(A0)$) --
($.8*(A3) + .2*(A2)$) --
($1/3*(A3)+2/3*(B3)$) -- cycle;
\end{tikzpicture}
 \caption{}
\label{subfig:tr6}
\end{subfigure}
\begin{subfigure}[b]{0.3\textwidth}
\centering
﻿\begin{tikzpicture}[scale=1]
\foreach \a in {0,...,5}
{
\coordinate (A\a) at (72*\a + 90:1);
\coordinate (B\a) at (72*\a+ 90:.5);
}
\foreach \a [evaluate=\a as \b using \a+1] in {0,...,4}
{
\draw (A\a) --(A\b);
\draw (B\a) --(B\b);
}
\foreach \a in {0,...,4}
{
\draw (A\a) -- (B\a);
}
\draw[line width=1.2]
($.5*(B2) + .5*(B3)$) --
($.3*(A2) + .7*(A3)$) --
($.3*(A4) + .7*(A3)$) --
($.5*(B3) + .5*(B4)$) -- cycle;
\end{tikzpicture}
 \caption{}
\label{subfig:tr5}
\end{subfigure}
\begin{subfigure}[b]{0.3\textwidth}
\centering
﻿\begin{tikzpicture}[scale=1]
\foreach \a in {0,...,5}
{
\coordinate (A\a) at (72*\a + 90:1);
\coordinate (B\a) at (72*\a+ 90:.5);
}
\foreach \a [evaluate=\a as \b using \a+1] in {0,...,4}
{
\draw (A\a) --(A\b);
\draw (B\a) --(B\b);
}
\foreach \a in {0,...,4}
{
\draw (A\a) -- (B\a);
}
\draw[line width=1.2]
($.6*(B2) + .4*(B1)$) --
($.4*(A2) + .6*(B2)$) --
($.4*(A3) + .6*(B3)$) --
($.6*(B3) + .4*(B4)$) -- cycle;

\end{tikzpicture}
 \caption{$G\negthinspace B\negthinspace P_5$}
\label{subfig:tr1}
\end{subfigure}
\caption{Some truncations of simple polyhedra }
\label{f1a}
\end{figure}
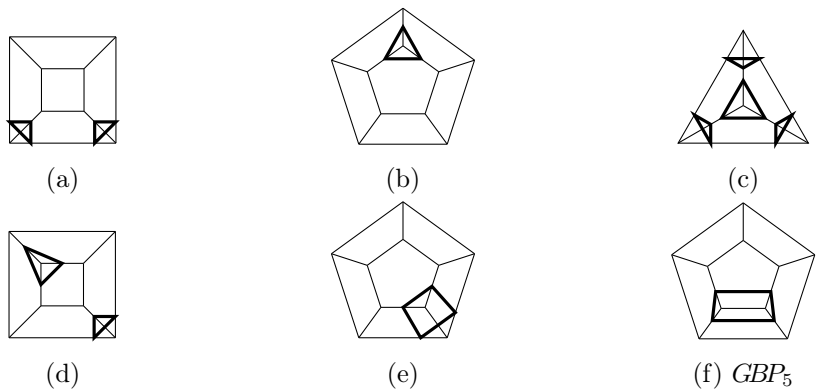

The cases \subref{subfig:al2}, \subref{subfig:al3},
\subref{subfig:al4}, and \subref{subfig:al6}
are obtained
from well-known polyhedra by truncating vertices,
while \subref{subfig:al1}, \subref{subfig:al1a}, and \subref{subfig:al5}
are the \emph{bevellings} \footnote{In spanish, the term \emph{biselados}
was suggested by Alberto Verjovsky.} of the pentagonal prism by cutting
along a horizontal, for \subref{subfig:al1}, \subref{subfig:al1a}, or a vertical edge,
for \subref{subfig:al5}.

They can be also described as the double bevellings of the cube; the common case of
\subref{subfig:al1} and \subref{subfig:al1a} is the bevelling on two non-parallel disjoint edges
and is the prototype of a polytope whose \emph{moment-angle manifold}
(the intersection of ellipsoids obtained by duplicating every $A_i$ and
whose quotient by a torus gives the same polytope) has non-trivial
Massey products, and it is plausible that the same is true for $Z(GBP)$,
see \cite[p.~26]{Pa}, where the Figure~\ref{fig:bibi} appears.
In particular, it is not a connected sum of elementary manifolds, as the rest of them.
The topology of this intersection will be described in the next section.

\begin{figure}[ht]
\begin{center}
\begin{tikzpicture}
\coordinate (A1) at (-1,-1.5);
\coordinate (A2) at (1,-1.5);
\coordinate (A3) at (-1,1.5);
\coordinate (A4) at (.5,1.5);
\coordinate (A5) at (1,1);

\coordinate (T1) at (-.75, .25);
\coordinate (B1) at ($(A1)+(T1)$);
\coordinate (B2) at ($(A3)+(T1)$);

\coordinate (T2) at (1.5, 1);
\coordinate (C1) at ($(A2)+(T2)$);
\coordinate (C2) at ($(A5)+(T2)$);
\coordinate (C3) at ($(A4)+(T2)$);

\coordinate (T3) at (.5, 1);
\coordinate (D1) at ($(A1)+(T3)$);
\coordinate (D2) at ($(A3)+(T3)$);

\draw (A2) -- (A1) -- (A3) -- (A4) -- (A5) -- cycle;
\draw (A1) -- (B1) -- (B2) -- (A3);

\draw (A2) -- (C1) -- (C2) -- (A5);
\draw (A4) -- (C3) -- (C2);

\draw (B2) -- (D2) --(C3);

\draw[dashed] (B1) -- (D1) -- (D2) (D1) -- (C1);

\node at ($.65*(A3) + .35*(C3)$) {$v_1$};
\node at ($.5*(A1) + .5*(B2)$) {$w_1$};
\draw[dashed,->] ($.5*(A1) + .5*(A2) - (0,.25)$) node[below] {$v_2$} -- ($.5*(A1) + .5*(A2) + (0,.25)$);
\node at ($.5*(A4) + .5*(C2)$) {$w_2$};
\node at ($.5*(A2) + .5*(C2)$) {$v_3$};
\draw[dashed,->] ($.5*(B2) + .5*(D2) + (-.25,.25)$) node[left] {$v_4$} -- ($.5*(B2) + .5*(D2) - (-.25,.25)$);
\node at ($.5*(A2) + .5*(A3)$) {$v_5$};
\draw[dashed,->] ($.5*(C3) + .5*(D2) + (0,.25)$) node[above] {$v_6$} -- ($.5*(C3) + .5*(D2) - (0,.25)$);

\end{tikzpicture}
 \caption{}
\label{fig:bibi}
\end{center}
\end{figure}

We now describe all the other cases.
Let us denote by $\mathcal{S}_{g,n}$ the closed orientable surface of genus $g$
with $n$ boundary components (for short, $\mathcal{S}_{g}=\mathcal{S}_{g,0}$).
Since \subref{subfig:al5}
is an hexagonal prism, then the corresponding intersection is diffeomorphic to
$\mathcal{S}_{17} \times\mathbb{S}^1$.

For the remaining cases we use Proposition~\ref{prop:truncar} for truncations of
simple polytopes.

Note that \subref{subfig:al2} and \subref{subfig:al6} are different double truncations of a cube (there are $3$ possible ones). Hence the two intersections are diffeomorphic to
$4 (\mathbb{S}^1 \times \mathbb{S}^1 \times \mathbb{S}^1)\# 29 (\mathbb{S}^{2} \times \mathbb{S}^1)$.
The case \subref{subfig:al3} is a truncation of the pentagonal prism, so the intersection is diffeomorphic to
$ 2 (\mathcal{S}_5 \times \mathbb{S}^1)\# 15 (\mathbb{S}^{2} \times \mathbb{S}^1)$.
Finally, \subref{subfig:al4}
is obtained by truncating the four vertices of the tetrahedron so its intersection is diffeomorphic to
$49 (\mathbb{S}^{2} \times \mathbb{S}^1)$.

\subsection{The manifold \texorpdfstring{$Z$}{Z} associated to the Gyrobipentaprism}
\mbox{}

\begin{figure}[ht]
\centering
\begin{tikzpicture}
\foreach \a in {0,...,5}
{
\coordinate (A\a) at (72*\a + 90:1.5);
\coordinate (B\a) at ($(A\a)-.75*(0,1)$);
}
\filldraw[fill=blue!10!white] ($.5*(A1)+.5*(A2)$)
-- ($.5*(B1)+.5*(B2)$) --
($.5*(B3)+.5*(B4)$) --
($.5*(A3)+.5*(A4)$) -- cycle;
\foreach \a [evaluate=\a as \b using \a+1] in {0,...,4}
{
\draw (A\a) --(A\b);
}
\coordinate (C2) at ($.6*(A2)+.4*(B2)+(.1,-.2)$);
\coordinate (C3) at ($.6*(A3)+.4*(B3)+(-.1,-.2)$);
\draw (B1) -- (B2) -- (B3) -- (B4);
\draw[dashed] (B1) -- (B0) -- (B4);
\draw[dashed] (A0) -- (B0);
\draw (A1) -- (B1);
\draw (A4) -- (B4);
\draw (A2) -- (C2) -- (B2);
\draw (A3) -- (C3) -- (B3);
\draw (C2) -- (C3);

\node at (.25,.15) {$F_1$};
\node at (0,-1.5) {$F_2$};
\node at (-1.5,-1) {$F_{3}$};
\node[gray] at (-.7,.65) {$F_{4}$};
\node[gray] at (.7,.65) {$F_{5}$};
\node at (1.5,-1) {$F_{6}$};
\node at (0,-2.25) {$F_7$};
\node[gray] at (-.5,-.15) {$F_8$};

\begin{scope}[xshift=4cm]
\foreach \a in {0,...,5}
{
\coordinate (A\a) at (72*\a + 90:1.5);
\coordinate (B\a) at ($(A\a)-.75*(0,1)$);
}
\filldraw[fill=blue!10!white] ($.5*(A1)+.5*(A2)$)
-- ($.5*(B1)+.5*(B2)$) --
($.5*(B3)+.5*(B4)$) --
($.5*(A3)+.5*(A4)$) -- cycle;
\draw ($.5*(A3)+.5*(A4)$) -- (A4) --
(A0) -- (A1) -- ($.5*(A1)+.5*(A2)$);

\draw ($.5*(B3)+.5*(B4)$) -- (B4);
\draw ($.5*(B1)+.5*(B2)$) -- (B1);
\draw[dashed] (B1) -- (B0) -- (B4);
\draw[dashed] (A0) -- (B0);
\draw (A1) -- (B1);
\draw (A4) -- (B4);
\node at (-90:1.5) {$X$};

\node at (.15,.15) {$F_{1,X}$};
\node at (-1.75,-.5) {$F_{3,X}$};
\node[gray] at (-.7,.65) {$F_{4}$};
\node[gray] at (.7,.65) {$F_{5}$};
\node at (1.75,-.5) {$F_{6,X}$};
\node[gray] at (-.5,-.15) {$F_{8,X}$};

\end{scope}

\begin{scope}[xshift=8cm]
\foreach \a in {0,...,5}
{
\coordinate (A\a) at (72*\a + 90:1.5);
\coordinate (B\a) at ($(A\a)-.75*(0,1)$);
}
\filldraw[fill=blue!10!white] ($.5*(A1)+.5*(A2)$)
-- ($.5*(B1)+.5*(B2)$) --
($.5*(B3)+.5*(B4)$) --
($.5*(A3)+.5*(A4)$) -- cycle;
\draw ($.5*(A1)+.5*(A2)$) -- (A2) -- (A3)
-- ($.5*(A3)+.5*(A4)$);
\draw ($.5*(B1)+.5*(B2)$) -- (B2) -- (B3)
-- ($.5*(B3)+.5*(B4)$);

\coordinate (C2) at ($.6*(A2)+.4*(B2)+(.1,-.2)$);
\coordinate (C3) at ($.6*(A3)+.4*(B3)+(-.1,-.2)$);
\draw (A2) -- (C2) -- (B2);
\draw (A3) -- (C3) -- (B3);
\draw (C2) -- (C3);

\node at (90:.5) {$Y$};

\node at (.6,-.9) {$F_{1,Y}$};
\node at (.3,-1.5) {$F_2$};
\node at (-1.45,-1.5) {$F_{3,Y}$};
\node at (1.45,-1.5) {$F_{6,Y}$};
\node at (0,-2.25) {$F_7$};
\node[gray] at (-.4,-1.5) {$F_{8,Y}$};

\end{scope}
\end{tikzpicture}
 \caption{The polytope $G\negthinspace B\negthinspace P_5$ in \ref{subfig:al1} as union of $X$ and $Y$.}
\label{polyXY}
\end{figure}

We devote this section to the manifold $Z$ associated to polyhedra Gyrobipentaprism  $G\negthinspace B\negthinspace P_5$ in
Figure~\ref{subfig:al1}, which has not a straightforward description as for the other cases.
Following the same ideas of the truncation we cut the polytope $G\negthinspace B\negthinspace P_5$
along a rectangle as in Figure~\ref{polyXY} obtaining two new  polytopes $X$ and $Y$.
We denote the faces of $G\negthinspace B\negthinspace P_5,X,Y$ as in this figure. Both $X$ and $Y$ are pentagonal prisms.
In order to construct $Z$, we must perform reflections on all the faces except the ones coming from the
cutting rectangle. We obtain two $3$-manifolds with four components in the boundary, all of them tori.
The manifold $Z$ is obtained by suitably gluing four copies of both manifolds along the boundary components.
The precise description of the gluing and the final result occupies the rest of this section.

Let us denote the faces in Figure~\ref{polyXY}.
The faces $F_1, F_{1,X}, F_{1,Y}$ (resp. $F_1, F_{1,X}, F_{1,Y}$) are the upper (resp. lower) bases. The face $F_2$ (resp. $F_7$) is the upper
(resp. lower)
face of the bevelling. The faces $F_3, F_{3,X}, F_{3,Y}3$ are in the front left-hand side.
The faces $F_4,F_5$ are the backward lateral faces.

Since both polytopes are \emph{equal}, we describe them together. Consider a pentagonal prism where we mirror
along all faces but one of the lateral ones. We consider several steps:
\begin{enumerate}[label=(T\arabic{enumi})]
\item Reflection along the four lateral faces that are mirrors. The result is the product
of the reflection of a pentagon in four sides by the interval $I$. We can see the result in
Figure~\ref{pentagonoreflejado4}, i.e., a torus with four holes by interval: $\mathcal{S}_{1,4} \times I$.

\begin{figure}[ht]
\begin{center}
\begin{tikzpicture}[scale=2]

\bloqueiv{(0,0)}
\bloqueiv{(1,0)}
\bloqueiv{(0,-1)}
\bloqueiv{(1,-1)}

\draw[->-=.5] (-1,-1) -- (-1, 1);
\draw[->-=.5] (1,-1) -- (1, 1);
\draw[->-=.475, ->-=.525] (-1,-1) -- (1, -1);
\draw[->-=.475, ->-=.525] (-1,1) -- (1, 1);

\node at (1.5, 0) {$\times\mathbb{S}^1$};
\end{tikzpicture}
 \caption{The manifold associated to a pentagonal prism with a non-mirroring lateral face.}\label{pentagonoreflejado4}
\end{center}
\end{figure}
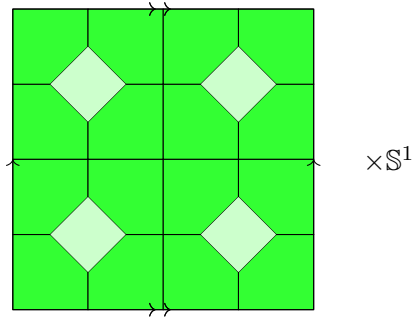

\item Reflecting on the basis faces. We obtain $\mathcal{S}_{1,4} \times \mathbb{S}^1$, a trivial
$\mathbb{S}^1$-fibre bundle.

\item The boundary of this manifold consists of $4$~tori corresponding to the non-mirroring lateral face of the original pentagonal prism.

\item With this process we denote by $X_e$
the manifold obtained by reflection of $X$ in the lateral faces $F_{3,X}$, $F_4$, $F_5$, $F_{6,X}$, and in the bases $F_{1,X}$ and $F_{8,X}$.

\item Analogously, let $Y_e$
the manifold obtained by reflection of $Y$ in the lateral faces $F_{1,Y}$, $F_2$, $F_7$, $F_{8,Y}$, and in the bases $F_{3,Y}$ and $F_{6,Y}$.

\item The result of the two previous statements can be seen in Figure~\ref{pentagonoreflejadoXY}.

\item The manifold $Z_X$ (associated to $X$ with one missing reflection) is obtained by reflecting
along the faces $F_2, F_7$ which are the faces of $Y$ disjoint to $X$. We obtain four diffeomorphic
connected components labelled by the reflections, $Z_X= X_e \cup X_2 \cup X_7 \cup X_{27}$.

\item Analogously, the manifold $Z_Y$ is obtained by reflecting
along the faces $F_4, F_5$. We obtain four diffeomorphic
connected components labelled by the reflections, $Z_Y= Y_e \cup Y_4 \cup Y_5 \cup Y_{45}$.

\item Let us describe the boundaries of these manifolds using the labellings of the reflections:
\begin{align*}
\partial X_e =& T_e \cup T_4 \cup T_5 \cup T_{45} \\
  \partial X_2 =& T_2 \cup T_{24} \cup T_{25} \cup T_{245} \\
  \partial X_7 =& T_7 \cup T_{47} \cup T_{57} \cup T_{457} \\
  \partial X_{27} =& T_{27} \cup T_{247} \cup T_{257} \cup T_{2457} \\
  \partial Y_e =& T_e \cup T_2 \cup T_7 \cup T_{27} \\
  \partial Y_4 =& T_4 \cup T_{24} \cup T_{47} \cup T_{247} \\
  \partial Y_5 =& T_5 \cup T_{25} \cup T_{57} \cup T_{257} \\
 \partial Y_{45} =& T_{45} \cup T_{245} \cup T_{457} \cup T_{2457}
\end{align*}

\item Each manifold is a trivial fibration. In order to get $Z$ we must glue them
along the boundary components, exchanging sections and fibers. This is how
Waldhausen graph manifolds are constructed using Neumann's plumbing, see~\cite{Waldhausen1967, Neumann1981}
\end{enumerate}

Let us describe the manifold~$Z$. Consider the complete bipartite graph $K_{4,4}$. We associate to each
vertex an oriented $\mathbb{S}^1$-bundle with base a $2$-torus and Euler number~$0$, i.e.,
a manifold diffeomorphic to $\mathbb{T}^3$. On each fiber bundle we take out an open fibered
solid torus for each edge. Then, we glue along the tori associated to each edge interchanging
fiber and section.

This description determines completely~$Z$ and it is possible to give a presentation
of its fundamental group, see e.g.~\cite{ACM:20}. For example the homology
is free of rank~$31$.

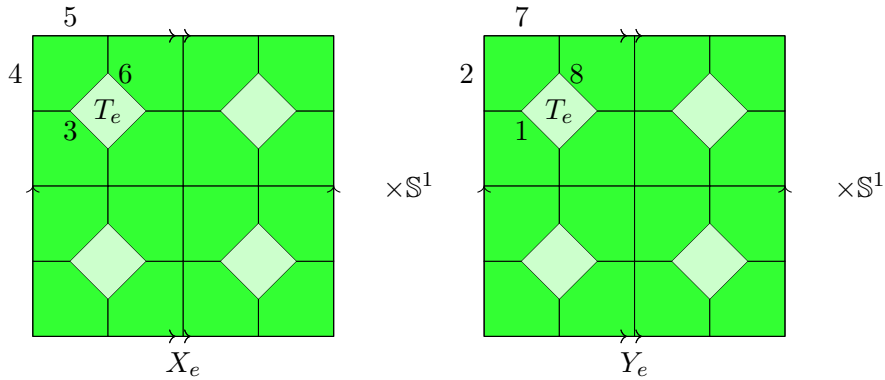
\begin{figure}[ht]
\begin{center}
\begin{tikzpicture}[scale=2]

\bloqueiv{(0,0)}
\bloqueiv{(1,0)}
\bloqueiv{(0,-1)}
\bloqueiv{(1,-1)}

\draw[->-=.5] (-1,-1) -- (-1, 1);
\draw[->-=.5] (1,-1) -- (1, 1);
\draw[->-=.475, ->-=.525] (-1,-1) -- (1, -1);
\draw[->-=.475, ->-=.525] (-1,1) -- (1, 1);

\node at (1.5, 0) {$\times\mathbb{S}^1$};
\node[below=2pt] at (0,-1) {$X_e$};
\node at (-1/2,1/2) {$T_e$};
\node[left] at (-1,3/4) {$4$};
\node[above] at (-3/4,1) {$5$};
\node[right] at (-1/2,3/4) {$6$};
\node[below] at (-3/4,1/2) {$3$};

\begin{scope}[xshift=3cm]
\bloqueiv{(0,0)}
\bloqueiv{(1,0)}
\bloqueiv{(0,-1)}
\bloqueiv{(1,-1)}

\draw[->-=.5] (-1,-1) -- (-1, 1);
\draw[->-=.5] (1,-1) -- (1, 1);
\draw[->-=.475, ->-=.525] (-1,-1) -- (1, -1);
\draw[->-=.475, ->-=.525] (-1,1) -- (1, 1);

\node at (1.5, 0) {$\times\mathbb{S}^1$};
\node[below=2pt] at (0,-1) {$Y_e$};
\node at (-1/2,1/2) {$T_e$};
\node[left] at (-1,3/4) {$2$};
\node[above] at (-3/4,1) {$7$};
\node[right] at (-1/2,3/4) {$8$};
\node[below] at (-3/4,1/2) {$1$};
\end{scope}
\end{tikzpicture}
 \caption{The manifolds $X_e$ and $Y_e$.}\label{pentagonoreflejadoXY}
\end{center}
\end{figure}

There is an alternative description of this manifold which connects it to complex geometry. Let us
consider an elliptic curve $E$ (a compact complex manifold of genus~$1$), and take four points
$p_1,\dots,p_4\in E$. In the projective surface $W:=E\times E$ consider the curve
\[
D:=\bigcup_{i=1}^4 E\times\{p_i\}
\cup
\bigcup_{i=1}^4 \{p_i\}\times E-
\]
Let us take regular neighbourhood of $D$ in $E$ constructed as a union of tubular
neighbourhoods of the irreducible components of~$D$, which at the double
points looks like a polydisk in $\mathbb{C}^2$. Then, $Z$ is homeomorphic
to the boundary of this neighbourhood.

Can we say something about the link induced by the red edges in
Figures~\ref{subfig:al1}
and~\subref{subfig:al1a}?
 
\section{More hyperbolic relations}\label{sec:hyperbolic}

\subsection{Hyperbolic relations between \texorpdfstring{$\check{Z}(BP_3)$}{Z(BP3)} and \texorpdfstring{$\check{Z}(P_O)$}{Z(PO)}}
\mbox{}

There is a hyperbolic structure $T_H$ in the tetrahedron
$ABCD$ in Figure~\ref{fig:hbp} with three ideal points $B$, $C$ and $D$. In this hyperbolic structure the dihedral angle at the edges $AB$, $AC$, and $AD$ is $\frac{\pi}{2}$,  but at  $BC$, $CD$, and $BD$ the dihedral angle is $\frac{\pi}{4}$ . This results follows from the hyperbolic structure given to the triangular bipyramid in Proposition~\ref{prop:hbp}.

The tetrahedron $T_H$ has a orbifold hyperbolic structure with mirror faces and angles $\frac{\pi}{n}$, $n=2,4$. Therefore it defines a tessellation $\mathfrak{T}_T$ in the hyperbolic space.

\begin{thm}
The tessellations $\mathfrak{T}_{BP}$ and $\mathfrak{T}_{P_H}$ are subtessellations in $\mathfrak{T}_T$. The volume of $\check{Z}(P_O)$ is a multiple of the volume of $\check{Z}(BP_3)$.
\[
  \vol (\check{Z}(P_O))= 2^4 \vol (\check{Z}(BP_3).
\]
\end{thm}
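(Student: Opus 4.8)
The plan is to exhibit both $BP_H$ and $O_H$ as unions of copies of a single tetrahedral tile, namely $T_H$ placed in the Klein model on the unit ball exactly as in the proof of Proposition~\ref{prop:hbp}: $T_H=ABCD$ with finite vertex $A=(0,0,0)$, ideal vertices $B=(0,0,1)$, $C=(0,1,0)$, $D=(1,0,0)$, and faces lying on the planes $x=0,\ y=0,\ z=0$ (the three through $A$) and $x+y+z=1$ (the face $BCD$). First I would record two elementary decompositions. Reflecting $T_H$ in the plane $x+y+z=1$ produces $ABCD\cup EBCD$, which by the very construction in the proof of Proposition~\ref{prop:hbp} is the hyperbolic bipyramid $BP_H$; thus $BP_H$ is the union of two copies of $T_H$ and $\vol(BP_H)=2\,\vol(T_H)$. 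On the other hand, the three faces of $T_H$ through $A$ lie on mutually orthogonal planes through the centre of the ball, so the hyperbolic reflections $\sigma_x,\sigma_y,\sigma_z$ in them are involutions generating a subgroup isomorphic to $(\mathbb{Z}/2)^3$; the eight images of $T_H$ under this group have pairwise disjoint interiors, their ideal vertices are the six points $(\pm1,0,0),(0,\pm1,0),(0,0,\pm1)$, and their union is precisely the regular ideal octahedron $O_H$ of Proposition~\ref{prop:oh}, its eight face planes being the $(\mathbb{Z}/2)^3$-images of $x+y+z=1$. Hence $\vol(O_H)=8\,\vol(T_H)$.

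Next I would deduce the subtessellation statement at the level of reflection groups. Let $G_T$ be the group generated by the reflections in the four faces of $T_H$; it is discrete with fundamental domain $T_H$ (all dihedral angles of $T_H$ are $\pi/2$ or $\pi/4$, so the Poincar\'e polyhedron theorem applies, as already remarked before the statement), and $\mathfrak{T}_T$ is its orbit of $T_H$. The six reflections generating $G_{BP}$ are $\sigma_x,\sigma_y,\sigma_z$ together with their conjugates by the reflection in $x+y+z=1$ (the faces of the second pyramid of $BP_H$ being the images of those of the first), so $G_{BP}\le G_T$; since $G_{BP}$ permutes the tiles of $\mathfrak{T}_T$ and $BP_H$ is the union of two of them, every tile of $\mathfrak{T}_{BP}$ is a union of two tiles of $\mathfrak{T}_T$, i.e.\ $\mathfrak{T}_{BP}$ is a subtessellation of $\mathfrak{T}_T$. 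Likewise the eight reflections generating $G_O$ are the conjugates of the reflection in $x+y+z=1$ by the elements of $\langle\sigma_x,\sigma_y,\sigma_z\rangle\le G_T$, so $G_O\le G_T$ and every tile of $\mathfrak{T}_{P_H}$ is a union of eight tiles of $\mathfrak{T}_T$; thus $\mathfrak{T}_{P_H}$ is a subtessellation of $\mathfrak{T}_T$ as well, and the common tile $T_H$ shows that both refinements live inside one and the same $\mathfrak{T}_T$, proving the first assertion. In particular $[G_T:G_{BP}]=\vol(BP_H)/\vol(T_H)=2$ and $[G_T:G_O]=\vol(O_H)/\vol(T_H)=8$.

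Finally I would do the volume bookkeeping. Recall from the Corollary identifying $\mathbf{BP}$ with $\check{BP}_3$ that $\check{Z}(BP_3)$ is the cover of $\mathbb{H}^3/G_{BP}$ (whose fundamental domain is $BP_H$) corresponding to $\ker\omega_{BP}$, where $\omega_{BP}\colon G_{BP}\twoheadrightarrow(\mathbb{Z}/2)^6$; this subgroup has index $2^6$, so $\vol(\check{Z}(BP_3))=2^6\,\vol(BP_H)=2^6\cdot 2\,\vol(T_H)=2^{7}\,\vol(T_H)$. Similarly $\check{Z}(P_O)$ is the index-$2^8$ cover of $\mathbb{H}^3/G_O$ associated to $\ker\omega_O$ with $\omega_O\colon G_O\twoheadrightarrow(\mathbb{Z}/2)^8$, so $\vol(\check{Z}(P_O))=2^8\,\vol(O_H)=2^8\cdot 8\,\vol(T_H)=2^{11}\,\vol(T_H)$; dividing yields $\vol(\check{Z}(P_O))=2^{4}\,\vol(\check{Z}(BP_3))$. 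The only genuinely geometric content lies in the first step, and the point to be careful about is that the eight tetrahedra filling $O_H$ and the two filling $BP_H$ really are the same tile; here this is transparent because all of them are literally $G_T$-translates of the one explicit tetrahedron $T_H$ in the unit-ball model, so no appeal to rigidity of hyperbolic tetrahedra is needed, and everything else is a count of fundamental domains.
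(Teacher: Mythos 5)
Your proof is correct and follows essentially the same route as the paper: decompose $BP_H$ into two copies of $T_H$ (reflection through $BCD$) and $O_H$ into eight copies (reflections through the coordinate planes), then multiply by the covering degrees $2^6$ and $2^8$ coming from $\ker\omega_{BP}$ and $\ker\omega_O$ to get $2^7\vol(T_H)$ versus $2^{11}\vol(T_H)$. Your only addition is to make explicit, via the containments $G_{BP}\le G_T$ and $G_O\le G_T$, the subtessellation claim that the paper treats as immediate from the tile decompositions.
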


\begin{proof}
The bipyramid $BP_H$ (Figure~\ref{fig:hbp}) can be seen as the union of  two tiles, the tetrahedron $ABCD$  and  its reflection on the face $BCD$. The octahedron $P_H$ is the union of eight tiles, the tetrahedron $ABCD$  and  its reflection on the faces $ABC$, $ACD$ and $ABD$. Then
\[
\begin{matrix}
\vol (\check{Z}(BP_3))&=& 2^6 \vol(BP_H)&=& 2^{6+1} \vol(T_H) \\
  \vol(\check{Z}(P_O)) &=& 2^8 \vol(P_O)&=& 2^{8+3} \vol(T_H)
\end{matrix}
\]
implies
\[
  \vol (\check{Z}(P_O))= 2^4 \vol (\check{Z}(BP_3).
  \qedhere
\]
\end{proof}

\subsection{The rhombic dodecahedron}
\mbox{}

There is another interesting hyperbolic polyhedron whose associated tessellation  is a subtessellations in $\mathfrak{T}_T$. Consider the following  hyperbolic rhombic dodecahedron $RD_H$ in the Klein model of the hyperbolic 3-space $\mathbb{H}^3$: The six $4$-vertices are the ideal points $(\pm 1,0,0)$, $(0,\pm1,0)$ and $(0,0,\pm1)$ (the vertices of the octahedron), the eight simple vertices are  the points $(\pm \frac{1}{2}, \pm \frac{1}{2}, \pm \frac{1}{2})$. It is obtained by reflection of the triangular bipyramid $ABCDE$ at its three facets $ABC$, $ACD$ and $ABD$  (Figure~\ref{fig:hbp}), that is, reflection at the three coordinate planes. The edges of $RB_H$ are $BE$, $CE$, $DE$ and their images by the reflections. The dihedral angle at each edge $BE$ is the same as the angle in the triangular bipyramid, that is   $\frac{\pi}{2}$ as was proved in Proposition~\ref{prop:hbp}, and, by symmetry, all the others dihedral angles are also $\frac{\pi}{2}$.
Observe that the group of symmetries  of $RD_H$ is the same that the one of the octahedron.

The tessellation $\mathfrak{T}_{RD}$ in $\mathbb{H}^3$ defined by $RD_H$ is a subtessellations of $\mathfrak{T}_T$. The polyhedron $RD_H$ is made up of 8 triangular bipyramid $BP_H$, 8 tiles of $\mathfrak{T}_{BP}$, then by 16 tiles of $\mathfrak{T}_T$.
\[
\vol (RD_H)= 8 \vol (BP_H) = 16\vol (T_H)
\]

The group $G_{RD}$, subgroup of $\aut\mathbb{H}^3$ generated by the hyperbolic reflections on the twelve  planes containing the faces  of $RD_H$, is the automorphism group of $\mathfrak{T}_{BP}$.
The quotient $\mathbb{H}^3/G_{RD}$ defines a hyperbolic orbifold structure $\mathbf{RD}$ in $RD_H$.

There is a geometric embedding of the rhombic dodecahedron $P_{RD}$  in $\mathbb{R}^{12}$.
This is the polytope associated to an intersection of ellipsoids with $6$ generic isolated singularities.  The manifold $Z(RD)$ has $6\times 2^8$ singular points. Recall that $\check{P}_{RD}$, complement of the non-simple vertices in $P_{RD}$, has an orbifold structure. This orbifold structure is isomorphic to the orbifold structure $\mathbf{RD}$. Then $\check{Z}(RD)$ has a complete hyperbolic structure with $6\times 2^8$ ends of torus type and  $G_{RD}\cong\pi_1^{\text{\rm orb}}(\check{P}_{RD})$.

\[
\vol (\check{Z}(RD))= 2^{12} \vol (RD_H) = 2^{15}\vol (BP_H)=2^{16}\vol (T_H)
\]
\[
\vol (\check{Z}(RD))=2^9\vol (\check{Z}(BP_3)=2^5\vol (\check{Z}(P_O)
\]

The smoothings of the manifold $Z(RD)$ can be studied. In fact the smoothings of the rhombic dodecahedron, having the same symmetry group that the octahedra, reduces to seven topological types. One of them is the hyperbolic dodecahedron studied in \cite{ALdML:16}.

%
%
%

\newcommand{\etalchar}[1]{$^{#1}$}
 \providecommand\noopsort[1]{}
\providecommand{\bysame}{\leavevmode\hbox to3em{\hrulefill}\thinspace}
\providecommand{\MR}{\relax\ifhmode\unskip\space\fi MR }
\providecommand{\MRhref}[2]{%
  \href{http://www.ams.org/mathscinet-getitem?mr=#1}{#2}
}
\providecommand{\href}[2]{#2}

\end{document}